\numberwithin{equation}{section}
\numberwithin{figure}{section}
\newtheorem{theorem}{Theorem}[section]
\newtheorem{proposition}[theorem]{Proposition}
\newtheorem{lemma}[theorem]{Lemma}
\theoremstyle{definition}
\newtheorem{definition}[theorem]{Definition}
\newtheorem*{definition*}{Definition}
\newtheorem{remark}[theorem]{Remark}
\newtheorem*{note}{Note}
\newcommand*{\N}{\ensuremath{\mathbb{N}}}
\newcommand*{\Z}{\ensuremath{\mathbb{Z}}}
\newcommand*{\R}{\ensuremath{\mathbb{R}}}
\renewcommand*{\tilde}{\widetilde}
\renewcommand{\phi}{\varphi}
\DeclareSymbolFont{boldoperators}{OT1}{cmr}{bx}{n}
\newcommand{\T}{\mathbb{T}}
\def\XXint#1#2#3{{\setbox0=\hbox{$#1{#2#3}{\int}$}
\vcenter{\hbox{$#2#3$}}\kern-.5\wd0}}
\let\originalleft\left
\let\originalright\right
\renewcommand{\left}{\mathopen{}\mathclose\bgroup\originalleft}
\renewcommand{\right}{\aftergroup\egroup\originalright}
\renewcommand{\hat}{\widehat}
    \edef\sign{\pgfmathresult}%
    \edef\x{\pgfmathresult}%
    \edef\t{\pgfmathresult}%
    \edef\y{\pgfmathresult}%
\newcommand{\addperiod}[1]{#1.}
\titleformat*{\subsection}{\bfseries}
\titleformat{\subsubsection}[runin]
  {\normalfont\bfseries}
  {\thesubsubsection.}
  {0.5em}
  {\addperiod}
\titleformat*{\subsubsection}{\normalfont\itshape}
\titleformat*{\paragraph}{\bfseries}
\titleformat*{\subparagraph}{\large\bfseries}
\title{Higher-order propagation of chaos in $L^2$ for interacting diffusions}
\author{Elias Hess-Childs\thanks{Courant Institute of Mathematical Sciences, New York University.
{\footnotesize \href{mailto:elias.hess-childs@courant.nyu.edu}{elias.hess-childs@courant.nyu.edu}.}
}
\and 
Keefer Rowan\thanks{Courant Institute of Mathematical Sciences,  New York University.
{\footnotesize \href{mailto:keefer.rowan@cims.nyu.edu}{keefer.rowan@cims.nyu.edu}.}
}
}
\date{\today}
\begin{document}

\maketitle

\vspace{-3mm}
\begin{abstract}
    In this paper, we study diffusions with bounded pairwise interaction. We show for the first time propagation of chaos on arbitrary time horizons in a stronger $L^2$-based distance, as opposed to the usual Wasserstein or relative entropy distances. The estimate is based on iterating inequalities derived from the BBGKY hierarchy and does not follow directly from bounds on the full $N$-particle density. This argument gives the optimal rate in $N$, showing the distance between the $j$-particle marginal density and the tensor product of the mean-field limit is $O(N^{-1})$. We use cluster expansions to give perturbative higher-order corrections to the mean-field limit. For an arbitrary order $i$, these provide ``low-dimensional'' approximations to the $j$-particle marginal density with error $O(N^{-(i+1)})$.
\end{abstract}

\setcounter{tocdepth}{1} 
\tableofcontents

\vspace{-3mm}
\section{Introduction}

In this paper we consider systems of $N$ interacting particles in $\Omega = \R^d$ or $\T^d$ of the form
\begin{equation}\label{eq.SDE}
\begin{cases}
dX_{j,N}(t)= \tfrac{1}{N}\sum_{k=1}^N K(X_{j,N}(t),X_{k,N}(t))dt+\sqrt{2}dW_j(t),&j\in\{1,\cdots,N\}\\
X_{j,N}(0)=Y_j,
\end{cases}
\end{equation}
where the $W_j(t)$ are independent standard Brownian motions, $Y_j$ are i.i.d. random variables with probability density $f(x)$, and $K(x,y)$ denotes the drift a particle at position $y$ induces on a particle at position $x$. Particle systems of this form arise in many contexts such as vortices in viscous fluids~\cite{onsager_statistical_1949,marchioro_mathematical_1994}, the training of large neural networks~\cite{chizat_global_2018,rotskoff_trainability_2022}, and aggregation and collective motion of microscopic organisms~\cite{topaz_nonlocal_2006,perthame_transport_2007}.

We recall that the law of the vector $(X_{1,N}, X_{2,N},\dotsc,X_{N,N})$ has a density $f_{N,N} : [0,\infty) \times \Omega^{N} \to \R$, which solves the Liouville 
equation,
\begin{equation}
\label{eq.liouville}
\begin{cases}\partial_t f_{N,N} - \Delta f_{N,N} + \frac{1}{N} \sum_{k,\ell=1}^N \nabla_{x_k} \cdot (K(x_k, x_\ell) f_{N,N}) =0,\\
f_{N,N}(0,x) = \prod_{k=1}^N f(x_k) = f^{\otimes N}(x).
\end{cases}
\end{equation}

By integrating the equation \eqref{eq.liouville} over $x_{j+1},\dotsc,x_{N}$ one finds that the marginal densities $f_{j,N}$ satisfy the PDE hierarchy
\begin{equation}
    \partial_t f_{j,N} - \Delta f_{j,N}+\frac{1}{N} \sum_{k,\ell=1}^j \nabla_{x_k}\hspace{-0.8mm}\cdot(K(x_k, x_\ell) f_{j,N}) ={-\frac{N-j}{N}} \sum_{k=1}^j \nabla_{x_k}\hspace{-0.8mm}\cdot\hspace{-0.2mm}\int K(x_k, x_*) f_{j+1,N}(x,x_*)\,dx_*
    \label{eq.bbgky-sketch}
\end{equation}
with initial data
\[f_{j,N}(0,\cdot) = f^{\otimes j}.\]
We note that $f_{N,N}$ is exchangeable, therefore the $j$-particle marginals $f_{j,N}$ are exchangeable and independent of which $N-j$ coordinates were integrated over. 

We study the \textit{propagation of chaos} of the system~\eqref{eq.SDE}, that is for any fixed $j$, the convergence as $N\to \infty$ of the marginal density $f_{j,N} \to \rho^{\otimes j}$, where $\rho$ solves the \textit{McKean-Vlasov} equation
\begin{equation}\label{eq.Mckean-Vlasov-equation}
    \begin{cases}
        \partial_t\rho(t,x)-\Delta\rho(t,x)+\nabla\cdot\big(\int K(x,x_*)\rho(t,x_*)\,dx_*\rho(t,x)\big)=0,\\
        \rho(t,\cdot)=f(\cdot).
    \end{cases}
\end{equation}

Propagation of chaos has been shown under a wide range of conditions on $f,\rho,$ and $K$ and under various distances; for some recent results see~\cite{durmus_elementary_2020,li_law_2020,guillin_systems_2023} and for a review of the vast literature see~\cite{chaintron_propagation_2022}. Recently, there has been lots of activity around quantitative propagation of chaos using relative entropy as a distance. In particular, global bounds---that is bounds on the relative entropy between $f_{N,N}$ and $\rho^{\otimes N}$---have been used to show quantitative propagation of chaos such as in~\cite{ben_arous_increasing_1999,jabin_mean_2016,lacker_strong_2018,jabir_rate_2019} for non-singular interactions. Additionally, estimates of this kind have been used for a large class of singular interactions~\cite{jabin_quantitative_2018, bresch_mean_2019,de_courcel_sharp_2023,rosenzweig_modulated_2023}.

Results based on global bounds at best show 
\[\sqrt{H(f_{j,N} \mid \rho^{\otimes j})} = O\Big(\sqrt{\tfrac{j}{N}}\Big),\]
where $H(f \mid g)$ is the relative entropy of $f$ with respect to $g$. This was widely believed to be optimal, but in~\cite{lacker_hierarchies_2023} it was shown that
\[\sqrt{H(f_{j,N} \mid \rho^{\otimes j})} = O\Big(\frac{j}{N}\Big),\]
for a class of interactions satisfying an exponential integrability condition. Further, this rate was shown to be optimal by constructing an example that saturates the bound. Instead of using global bounds,~\cite{lacker_hierarchies_2023} uses the BBGKY hierarchy~\eqref{eq.bbgky-sketch} to get bounds on $H(f_{j,N} \mid \rho^{\otimes j})$ in terms of 
$H(f_{j+1,N}~\mid~\rho^{\otimes (j+1)})$. The optimal rate is then shown by an iteration of these bounds.

In this paper, we instead prove bounds in an $L^2$ norm. In particular, we show for initial conditions $f \in L^1$ and bounded interaction, that for any $j = o(N^{2/3}),$
\[D_{j,N} := \bigg(\int \Big| \frac{f_{j,N} - \rho^{\otimes j}}{\rho^{\otimes j}}\Big|^2 \rho^{\otimes j}\,dx\bigg)^{1/2} = O\Big(\frac{j}{N}\Big).\]
We note that  $ D_{j,N}^2 = \chi^2(f_{j,N} \mid \rho^{\otimes j}),$ where $\chi^2(\mu \mid \nu)$ is the \textit{chi-squared divergence} of $\mu$ with respect to $\nu$. Pinsker's inequality and~\cite[Theorem 2]{sason_f_2016} respectively imply the inequalities  
\[\|\mu-\nu\|_{TV}^2 \leq \frac{1}{2} H(\mu \mid \nu) \leq \frac{1}{2} \chi^2(\mu \mid \nu),\]
for any probability measures $\mu$ and $\nu$. The $L^2$-type convergence of $f_{j,N} \to \rho^{\otimes j}$ thus implies the relative entropy convergence at the same optimal rate as in~\cite{lacker_hierarchies_2023}, which in turn implies $TV$ convergence.

The $L^2$ bounds are shown using somewhat analogous techniques to~\cite{lacker_hierarchies_2023}, controlling $D_{j,N}$ by $D_{j+1,N}$ and iterating these bounds. In contrast to relative entropy, no sufficiently strong global bounds on $D_{N,N}$ are available. In fact, the best bound we can show is 
\[D_{N,N} \leq C e^{CNt}.\]
This bound is far from sufficient to directly imply that $D_{j,N} \to 0$ for fixed $j$. As such, this $L^2$ distance is not amenable to global techniques and so necessitates analysis of the BBGKY hierarchy. We note in the recent preprint~\cite{bresch_new_2022}, propagation of chaos is shown in certain $L^p$ spaces and even for certain singular interactions, but only for sufficiently short times. In this paper, we show convergence on any time horizon.

One heuristic justification for propagation of chaos involves discarding terms of order $N^{-1}$ in the BBGKY hierarchy~\eqref{eq.bbgky-sketch} and noting that the $\rho^{\otimes j}$ are a solution to the resulting hierarchy of equations, as is explained in Subsection~\ref{ss.overview}. This suggests that the tensor product of the McKean-Vlasov solution $\rho^{\otimes j}$ is the $0$th-order term of a perturbative expansion of $f_{j,N}$ in powers of $N^{-1}$. This turns out to be in fact true, as we show in this paper by constructing this perturbative expansion and showing the appropriate bounds. Finding the correct perturbative approximation of order greater than $0$ requires the introduction of the \textit{cluster (or cumulant) expansion}, which rewrite the marginal densities $f_{j,N}$ in terms of certain sums of products of \textit{cluster functions} $g_{j,N}$, made precise in~\eqref{eq.f-jN-cluster-exp-intro}. After the correct perturbative approximations are found through cluster expansion, proving that they approximate $f_{j,N}$ to the appropriate order follows by the same analysis as the $0$th-order case discussed above.

The higher-order terms of the perturbative approximation are, unlike the $0$th-order terms $\rho^{\otimes j}$, not positive. In fact, in order to preserve the mean of the perturbative approximation, the higher-order terms are mean-zero and hence take both positive and negative values. As such, without strong pointwise control on the higher-order terms, the positivity of the higher-order approximations is unclear. This makes analyzing the error between $f_{j,N}$ and its approximation less amenable to probabilistic techniques such as relative entropy. In contrast, there are no such issues in the $L^2$ analysis.

Cluster expansions---and related expansions using correlation errors or $v$-functions---have been used in a wide variety of contexts to study asymptotics of statistical particle systems, for example~\cite{de_masi_mathematical_1991,pulvirenti_boltzmanngrad_2017,bodineau_statistical_2020} and citations therein. Somewhat relevant to our current study, \cite{duerinckx_size_2021} uses Glauber calculus to estimate cluster functions in the kinetic setting without noise in the evolution. In that work, the author uses a non-hierarchical technique and requires strong bounds on the regularity of the interaction: to go to arbitrary order the interaction must be $C^\infty$. 

The pair of papers~\cite{paul_size_2019,paul_asymptotic_2019} use a correlation error expansion and hierarchical techniques. The authors consider an abstract setting that covers both quantum mean-field models as well as stochastic jump processes such as the Kac model. Their analysis relies on iterating across the BBGKY hierarchy to pass estimates on the \textit{correlation errors}, which in turn implies propagation of chaos. In~\cite{paul_asymptotic_2019}, they take perturbative expansions of the correlation errors to construct higher order corrections to propagation of chaos. In their setting, the time evolution is unitary, allowing the use of techniques that are not clearly applicable to the setting we consider. Additionally, their perturbative expansion involves approximations which meaningfully depend on $N$, and the number of equations one must solve to construct the approximation of $f_{j,N}$ to order $i$ depends on $j$. In contrast, in our approach neither of these properties appear.

\subsection{Statement of main results}

Since the force a particle exerts on itself is given by $K(x,x)$, solutions to~\eqref{eq.liouville} are determined not only by $K(x,y)$ up to a.e.\ equivalence with respect to the Lebesgue measure on $\Omega^2$ but also by the a.e.\ equivalence class of the diagonal $K(x,x)$ with respect to the Lebesgue measure on $\Omega$. As such, we make the following definition.

\begin{definition*}
    For functions $K : \Omega^2 \to \R^d,$ we denote
    \[\|K\|_{L^\infty_\delta(\Omega^2)} := \|K(x,y)\|_{L^\infty(\Omega^2)} + \|K(x,x)\|_{L^\infty(\Omega)}.\]
\end{definition*}

We also introduce some notation necessary to express the perturbative expansions.
\begin{definition*}
    For any set $A$, we use the notation $\pi \vdash A$ to denote that $\pi$ is a partition of $A$. When appearing in a sum
    \[\sum_{\pi \vdash A}\]
    we mean that the sum is taken over all possible partitions of $A.$ We often take $\pi \vdash [j]$ for some $j \in \N$ where
    \[[j] :=\{1,\dotsc,j\}.\]
\end{definition*}

\begin{definition*}
    Fix some finite set $A$ and let $(h_j)_{1 \leq j \leq |A|}$ be a family of exchangeable functions such that $h_j : \Omega^j \to \R$. Then for any partition $\pi \vdash A,$ we denote 
    \[\prod_{P \in \pi} h_P : \Omega^A \to \R\]
    such that
    \[\prod_{P \in \pi} h_P(x) := \prod_{P \in \pi} h_{|P|}(x^P),\]
    where for $x \in \Omega^A$
    \[x^P := (x_k)_{k \in P}.\]
    Note by exchangeability, the order of the $x_k$ in $x^P$ doesn't matter.
\end{definition*}

\begin{definition*}
    For any partition $\pi \vdash j$ where $\pi = \{P_1,...,P_k\}$, by
    \[\sum_{\substack{(i_P)_{P \in \pi}\\ \sum i_P = i}}\]
    we denote that the sum is over all choices of $i_{P_1},\dotsc,i_{P_k} \in \N$ such that $\sum_{P\in \pi} i_P = i$.
\end{definition*}

Our first results are on the existence and representation of the perturbative approximation.

\begin{definition}\label{def.order}
    Let $D = \{(i,j) \in \N^2 : 1 \leq j \leq i+1, j \geq 1\}$. We define an ordering on $D$ by saying $(i,j) \leq (k,\ell)$ if
    \[i < k\ \text{or both}\ i=k\ \text{and}\ j \geq \ell.\]
\end{definition}

\begin{figure}[H]
\[\begin{tikzcd}
	&&& {} \\
	0 & 0 & {g_3^2} & {} \\
	0 & {g_2^1} & {g_2^2} \\
	{g_1^0} & {g_1^1} & {g_1^2}
	\arrow[from=1-4, to=4-3]
	\arrow[from=2-3, to=4-2]
	\arrow[from=3-2, to=4-1]
	\arrow[from=3-3, to=2-3]
	\arrow[from=4-2, to=3-2]
	\arrow[from=4-3, to=3-3]
\end{tikzcd}\]
\caption{The dependency graph for $g_j^i$.}
\end{figure}
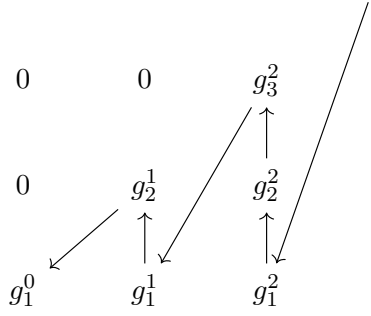

\begin{proposition}
    \label{prop.g-ij-intro}
    Suppose the initial distribution $f \in L^1(\Omega)$ and the interaction $K \in L_\delta^\infty(\Omega^{2})$. Then there exists a family of functions $g^i_j \in C^0_{loc}([0,\infty), L^1(\Omega^{j})),$ where $j \in \{1,2,\dotsc\}$ and $i \in \{0,1,\dotsc\}$ so that $g^i_j$ solve the equations~\eqref{eq.g-ij-PDE}\footnote{In the linked equation, the operators $H_k$ and $S_{k,\ell}$ appear. These are defined below in Definition~\ref{def.H-k-S-kl}.} with initial data~\eqref{eq.g-ij-initial-conditions}. More so, the $g^i_j$ have the properties:
    \begin{enumerate}
        \item \label{item.g-ij-vanish} For $(i,j) \not \in D$, $g^i_j = 0$.
        \item \label{item.g-ij-depend} For $(i,j) \in D$, the equation for $g^i_j$ depends only on the $g^k_\ell$ with $(k,\ell) \leq (i,j)$ under the ordering on $D$. More so, the equation for $g^i_j$ is linear in $g^i_j$ for $(i,j) > (0,1).$
        \item $g^0_1 = \rho$, the unique solution to the McKean-Vlasov equation~\eqref{eq.Mckean-Vlasov-equation}.
        \item\label{item.g-ij-unique} Assuming Property \ref{item.g-ij-vanish}, these solutions are unique for fixed $f$.
    \end{enumerate}
\end{proposition}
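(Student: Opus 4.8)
The plan is to build the family $(g^i_j)$ by strong induction over the totally ordered index set $(T,\leq)$ of Definition~\ref{def.order}, adopting throughout the convention $g^i_j := 0$ for $(i,j) \notin T$. Since only the finitely many values $j \in \{1,\dots,i+1\}$ are admissible for each fixed $i$, every element of $T$ has only finitely many predecessors, so this is an ordinary induction: one processes $i = 0,1,2,\dots$ and, within each $i$, the indices $j = i+1, i, \dots, 1$. The structural fact making this work is Property~\ref{item.g-ij-depend}, which I would read off directly from the derivation of~\eqref{eq.g-ij-PDE}: apart from $g^i_j$ itself, the equation at the node $(i,j)$ is built only out of the $g^k_\ell$ with $(k,\ell) < (i,j)$ in $T$ --- all available by the time one reaches $(i,j)$ --- and it is linear in $g^i_j$ once $(i,j) > (0,1)$. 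For the base node $(0,1)$, the equation~\eqref{eq.g-ij-PDE} is the McKean--Vlasov equation~\eqref{eq.Mckean-Vlasov-equation}, so I would set $g^0_1 := \rho$ and invoke classical well-posedness: since $f$ is a probability density, $\rho(t,\cdot)$ remains one, so the effective drift $x \mapsto \int K(x,x_*)\rho(t,x_*)\,dx_*$ is bounded by $\|K\|_{L^\infty}$, and McKean--Vlasov with a bounded interaction kernel has a unique solution with density in $L^\infty_{loc}([0,\infty),L^2(\T^d)) \cap L^2_{loc}([0,\infty),H^1(\T^d))$; this is Property~3.

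For the inductive step, fix $(i,j) \in T$ with $(i,j) > (0,1)$ and assume all $g^k_\ell$ with $(k,\ell) < (i,j)$ are already in hand. Substituting them into~\eqref{eq.g-ij-PDE} turns it into a linear parabolic Cauchy problem for $g^i_j$ of the schematic form $\partial_t g^i_j + \sum_{k=1}^j H_k g^i_j = F^i_j$, with the zero initial data of~\eqref{eq.g-ij-initial-conditions}, where the $H_k$ (and the $S_{k,\ell}$ entering $F^i_j$) are as in Definition~\ref{def.H-k-S-kl}, so that $\sum_k H_k$ is uniformly parabolic with bounded lower-order coefficients, and $F^i_j$ is a finite sum of the lower-order clusters --- products $\prod_P g^{i_P}_{|P|}$ acted on by the $S_{k,\ell}$, together with the one-variable contractions $\int K(x_k,x_*)(\,\cdot\,)(x,x_*)\,dx_*$ inherited from the right side of~\eqref{eq.bbgky-sketch}. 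Because $K \in L^\infty$ and each such operation is first order and in divergence form, the inductive hypothesis gives $F^i_j \in L^2_{loc}([0,\infty), H^{-1}(\T^{jd}))$, and then standard $L^2$--$H^1$ parabolic theory (Galerkin approximation together with an Aubin--Lions compactness argument) yields a weak solution in $L^\infty_{loc}([0,\infty),L^2) \cap L^2_{loc}([0,\infty),H^1)$: testing against $g^i_j$, the Laplacian produces $\|\nabla g^i_j\|_{L^2}^2$, the drift and lower-order terms are absorbed via Young's inequality, and $\langle F^i_j, g^i_j\rangle \leq \tfrac14\|\nabla g^i_j\|_{L^2}^2 + C(\|F^i_j\|_{H^{-1}}^2 + \|g^i_j\|_{L^2}^2)$, so a Gr\"onwall argument on each $[0,T]$ closes the estimate; the same computation applied to the difference of two solutions gives uniqueness.

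Properties~\ref{item.g-ij-vanish} and~\ref{item.g-ij-unique} I would fold into the same induction. For $(i,j) \notin T$, i.e.\ $j \geq i+2$, the initial data in~\eqref{eq.g-ij-initial-conditions} vanishes (the pure tensor $f^{\otimes N}$ carries no nontrivial cluster functions, and more generally no order-$i$ correction to a cluster of size $\geq i+2$ survives at $t=0$), and one checks from the combinatorial structure of~\eqref{eq.g-ij-PDE} that the source feeding $g^i_j$ involves only clusters whose indices are again outside $T$, hence zero by induction; linear uniqueness then forces $g^i_j \equiv 0$, which is Property~\ref{item.g-ij-vanish}. Property~\ref{item.g-ij-unique} is then immediate, since at every node the function is the unique solution of a linear parabolic Cauchy problem (or of McKean--Vlasov at $(0,1)$), so once Property~\ref{item.g-ij-vanish} fixes the out-of-$T$ functions the whole family is determined by $f$.

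The main obstacle is not the analysis: the PDE well-posedness at each node is routine (linear parabolic theory above the base, classical McKean--Vlasov theory at the base). The real content, which I expect to take up most of the argument and which lives upstream of this proposition, is deriving the hierarchy~\eqref{eq.g-ij-PDE} itself --- performing the cluster (cumulant) expansion of the BBGKY hierarchy~\eqref{eq.bbgky-sketch}, collecting the coefficient of each power of $N^{-1}$, and verifying both the triangular dependence in Property~\ref{item.g-ij-depend} and the vanishing of the source terms feeding indices outside $T$. Taming the M\"obius-type combinatorics of that expansion, rather than any estimate, is the crux; once it is in place, the existence, regularity, and uniqueness statements follow from the induction sketched above.
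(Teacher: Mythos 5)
Your proposal is correct and follows essentially the same route as the paper: induction over the ordering on $T$, with the McKean--Vlasov equation at the base node $(0,1)$ and standard linear parabolic theory at every later node, together with the consistency check that setting $g^i_j=0$ outside $T$ is compatible with the hierarchy. The one step you defer as "the crux" --- verifying that for $(i,j)\notin T$ every product term on the right-hand side of~\eqref{eq.g-ij-PDE} contains a factor indexed outside $T$ --- is exactly what the paper's Step~1 does in a few lines, e.g.\ by noting that $(m,|W|+1)\in T$ and $(i-m,j-|W|)\in T$ together force $j\le i+1$.
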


The functions $g^i_j$ at this stage may be somewhat opaque, but they are the natural perturbative expansion of the cluster functions $g_{j,N}$, as will be made clear in Subsection~\ref{ss.overview}.

\begin{theorem}
    \label{thm.f-ij-existence-intro}
    Suppose the initial distribution $f \in L^1(\Omega)$ and the interaction $K \in L^\infty_\delta(\Omega^2)$. Then let 
    \begin{equation}
    \label{eq.f-ij-def}
    f^i_j := \sum_{\pi \vdash [j]} \sum_{\substack{(i_P)_{P \in \pi}\\ \sum i_P = i}} \prod_{P \in \pi} g^{i_P}_P
    \end{equation}
    where the $g^i_j$ are as in Proposition~\ref{prop.g-ij-intro}. Then 
\begin{align}
    \partial_t f^i_j - \Delta f^i_j &=-\sum_{k=1}^j \nabla_{x_k} \cdot \int  K(x_k,x_*) f^i_{j+1}(x,x_*)\,dx_* - \sum_{k,\ell=1}^j \nabla_{x_k} \cdot ( K(x_k,x_\ell) f^{i-1}_j)
    \notag\\&\qquad+  j \sum_{k=1}^j \nabla_{x_k} \cdot \int K(x_k,x_*) f^{i-1}_{j+1}(x,x_*)\,dx_*, 
    \label{eq.f-ij-eqs-intro}
\end{align}
    where we take the convention $f^{-1}_j = 0$ and the $f^i_j$ have initial data
    \[f^i_j(0,\cdot) = \begin{cases} f^{\otimes j} & i=0,\\ 0 & i \geq 1. \end{cases}\]
    In particular, we have
    \[f^0_j = \rho^{\otimes j},\]
    where $\rho$ is the unique solution to the McKean-Vlasov equation~\eqref{eq.Mckean-Vlasov-equation}.
\end{theorem}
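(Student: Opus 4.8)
The plan is to verify the PDE \eqref{eq.f-ij-eqs-intro} directly by substituting the definition \eqref{eq.f-ij-def} of $f^i_j$ and using the equations~\eqref{eq.g-ij-PDE} satisfied by the cluster functions $g^i_j$ from Proposition~\ref{prop.g-ij-intro}. The key combinatorial identity is that differentiating a product $\prod_{P \in \pi} g^{i_P}_P$ in time and applying $-\Delta$ distributes over the factors via the Leibniz rule, and that the interaction terms appearing in the $g$-equations, once summed against all partitions $\pi \vdash [j]$ and all compositions $(i_P)$ with $\sum i_P = i$, reorganize precisely into the three terms on the right-hand side of \eqref{eq.f-ij-eqs-intro} --- an inter-cluster term (where a derivative in $x_k$ hits the cluster containing $k$ while $x_\ell$ lies in another cluster or in the extra integrated variable), a same-cluster term, and the $j$-weighted mean-field term. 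First I would record the regularity: since each $g^i_j$ lies in $L^\infty_{loc}L^2 \cap L^2_{loc}H^1$ and $K \in L^\infty$, each summand $\prod_{P} g^{i_P}_P$ and hence $f^i_j$ lies in the same space on $\T^{jd}$ (products of $L^2$ functions on disjoint variable blocks tensor into $L^2$ of the product space), so all the manipulations below are justified in the sense of distributions.

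The main computation proceeds in steps. First I would fix $i$ and induct on $i$ (the statement for $f^0_j = \rho^{\otimes j}$ is the base case and is essentially the observation that $\rho^{\otimes j}$ solves the "truncated" hierarchy, which I would verify by the Leibniz rule from the McKean--Vlasov equation~\eqref{eq.Mckean-Vlasov-equation}). For the inductive step, I would expand $\partial_t f^i_j - \Delta f^i_j$ term by term over the double sum in \eqref{eq.f-ij-def}: for each partition $\pi$ and each $(i_P)$, the operator $\partial_t - \Delta$ hits exactly one factor $g^{i_Q}_Q$ at a time, and I substitute the right-hand side of the $g$-equation for $g^{i_Q}_Q$. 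That right-hand side contains (schematically) an inter-particle term $S_{k,\ell}$ with both indices in $Q$, a "child" term pulling in an extra variable $x_*$ from $g^{i_Q}_{Q \cup \{*\}}$-type objects, and lower-order-in-$i$ terms. The heart of the proof is a bookkeeping lemma: summing the child terms over all $\pi \vdash [j]$ is equivalent to summing over all $\pi' \vdash [j]$ together with a choice of which block absorbs the new particle, and this reindexing converts $\sum_{\pi \vdash [j]} (\text{child term on one block})$ into $\sum_{k} \nabla_{x_k} \cdot \int K(x_k,x_*) f^i_{j+1}\,dx_*$ plus corrections that, combined with the explicit combinatorial factor $j$ coming from the number of ways a new particle could have been placed, produce the $j$-weighted term. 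The same-cluster $S_{k,\ell}$ terms assemble directly into $\sum_{k,\ell=1}^j \nabla_{x_k} \cdot (K(x_k,x_\ell) f^{i-1}_j)$ once one checks the index $\ell$ ranges over the whole block and the shift $i \mapsto i-1$ matches. Finally I would check the initial data: at $t=0$, every $g^i_j$ with $i \geq 1$ vanishes while $g^0_j$ has the appropriate initial condition, so \eqref{eq.f-ij-def} collapses to $f^{\otimes j}$ when $i = 0$ and to $0$ when $i \geq 1$; and $f^0_j = \rho^{\otimes j}$ is immediate since the only partition contributing is the all-singletons partition with every $i_P = 0$ --- wait, in fact \emph{every} partition contributes with all $i_P=0$, and $\sum_{\pi \vdash [j]} \prod_{P} \rho = \rho^{\otimes j}$ requires that $g^0_P = \rho^{\otimes |P|}$ only for $|P|=1$, i.e. $g^0_j = 0$ for $j \geq 2$, which follows from Property~\ref{item.g-ij-vanish} since $(0,j) \notin T$ for $j \geq 2$.

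The step I expect to be the main obstacle is the combinatorial reindexing that turns the cluster-level child terms into the marginal-level terms $f^i_{j+1}$ and $f^{i-1}_{j+1}$ with the correct coefficients, including the factor $j$. This is where one must carefully track: (a) which factor of the product the spatial derivative $\nabla_{x_k}$ falls on, (b) how the extra integration variable $x_*$ interacts with the partition structure --- whether it forms a new singleton block or joins an existing block --- and (c) how the weights $\tfrac{N-j}{N}$-type factors in the original BBGKY hierarchy, after the perturbative expansion in $N^{-1}$, distribute the order index $i$ among $\partial_t - \Delta$ (order $i$), the child term (split between order $i$ and order $i-1$ with the $j$-weight), and the same-cluster term (order $i-1$). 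I would isolate this as a standalone combinatorial lemma about the map $\{(\pi,Q,\text{new particle placement})\} \to \{\pi'\}$ and verify it by a direct counting argument, likely organized by whether the new particle lands in the block containing $k$ or elsewhere. Once this lemma is in hand, the rest is a mechanical, if lengthy, verification.
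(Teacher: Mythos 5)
Your plan coincides with the paper's proof of Proposition~\ref{prop.f-ij-equations}: apply $\partial_t-\Delta$ to the definition~\eqref{eq.f-ij-def} via the Leibniz rule, substitute the hierarchy~\eqref{eq.g-ij-PDE}, and reorganize the resulting sums by reindexing over partitions of $[j]\cup\{*\}$ according to whether $*$ lies in the block containing $k$ --- exactly the ``standalone combinatorial lemma'' you flag as the main obstacle, which the paper carries out as its Claims 1--3. Your treatment of the initial data and of $f^0_j=\rho^{\otimes j}$ (via $g^0_j=0$ for $j\geq 2$) is also the paper's; the induction on $i$ you propose is unnecessary but harmless, since the verification at order $i$ only uses the definitions of $f^{i-1}_j$ and $f^{i-1}_{j+1}$, not their equations.
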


We note that the equation~\eqref{eq.f-ij-eqs-intro} is what one gets from formally expanding $f_{j,N} = \sum_{i=0}^\infty N^{-i}f^i_j$, plugging the right hand side into the BBGKY hierarchy~\eqref{eq.bbgky-sketch}, and collecting orders. Thus we expect any $f^i_j$ solving~\eqref{eq.f-ij-eqs-intro} to be such that
\[f_{j,N} = \sum_{k=0}^i N^{-k} f^k_j + O(N^{-(i+1)}).\]

Theorem~\ref{thm.f-ij-existence-intro} gives an explicit representation~\eqref{eq.f-ij-def} of solutions $f^i_j$ to~\eqref{eq.f-ij-eqs-intro}. Further, properties~\ref{item.g-ij-vanish} and~\ref{item.g-ij-depend} of Proposition~\ref{prop.g-ij-intro} ensure that the expression~\eqref{eq.f-ij-def} for $f^i_j$ is computable in terms of the finite collection $\{g^k_\ell : k \leq i, (k,\ell) \in D\}$ which depends only on $i$, not on $j$ or $N$. That is, in order to compute $f^i_j$ for any $j$, one only needs to solve $\tfrac{1}{2}(i+2)(i+1)$ equations. It is in this sense that we say that $f^i_j$ give ``low-dimensional'' approximations to the $f_{j,N}.$

The main result of this paper is then to show that the $f^i_j$ as constructed in Theorem~\ref{thm.f-ij-existence-intro} appropriately approximate $f_{j,N}$. 
\begin{theorem}
    \label{thm.main-result-intro}
    Suppose $f \in L^1(\Omega), K \in L^\infty_\delta(\Omega^2),$ then for each $i \in \N,$ there exists $C(\|K\|_{L^\infty_\delta(\Omega^{2})},i)< \infty$ such that for any $N$ and any $j$ with
    \[j \leq C^{-1} e^{-Ct^2} N^{2/3},\]
    we have the bound
    \begin{equation}
    \label{eq.main-result-bound}
    \int \bigg|\frac{f_{j,N}- \sum_{k=0}^i N^{-k}f^k_j }{\rho^{\otimes j}}\bigg|^2 \rho^{\otimes j}\,dx \leq  Ce^{Ct} \Big(\frac{j}{N}\Big)^{2(i+1)},
    \end{equation}
    with the $f^k_j$ given as in Theorem~\ref{thm.f-ij-existence-intro}.
\end{theorem}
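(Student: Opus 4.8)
The plan is to prove the bound \eqref{eq.main-result-bound} by induction on $j$ running downward from $j \sim N^{2/3}$, exactly as in the $0$th-order propagation-of-chaos argument but now tracking the full $i$th-order remainder. Define the remainder $r^i_{j,N} := f_{j,N} - \sum_{k=0}^i N^{-k} f^k_j$ and the weighted $L^2$ quantity $E^i_{j,N} := \int |r^i_{j,N}/\rho^{\otimes j}|^2 \rho^{\otimes j}\,dx$. Since $f_{j,N}$ satisfies the BBGKY hierarchy \eqref{eq.bbgky-sketch} and each $f^k_j$ satisfies \eqref{eq.f-ij-eqs-intro}, subtracting and collecting orders of $N^{-1}$ produces a PDE for $r^i_{j,N}$ whose source terms are: (a) the coupling term $\tfrac{N-j}{N}\sum_k \nabla_{x_k}\cdot\int K\, r^i_{j+1,N}\,dx_*$, which is the ``next level'' of the hierarchy; (b) lower-order-in-$N$ self-interaction terms involving $r^{i}_{j,N}$ itself (giving a Gronwall factor); and (c) a leftover term of size $N^{-(i+1)}$ coming from the fact that the formal expansion is truncated — this is where the explicit formulas for $f^i_j$ from Theorem~\ref{thm.f-ij-existence-intro} are used to see that all orders $N^{-1},\dots,N^{-i}$ cancel exactly and only the $N^{-(i+1)}$ residual survives.

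The next step is the energy estimate: test the equation for $r^i_{j,N}$ against $r^i_{j,N}/(\rho^{\otimes j})$ — equivalently, work with the relative density $u := r^i_{j,N}/\rho^{\otimes j}$ and use that $\rho$ solves \eqref{eq.Mckean-Vlasov-equation} and is bounded above and below (this is where $f \geq m$ and $f \in L^\infty$, hence $m \leq \rho \leq M$ on compact time intervals, enters — it makes the weight $\rho^{\otimes j}$ comparable to Lebesgue measure with constants depending on $j$). Integration by parts on the diffusion term yields a good term $\int |\nabla u|^2 \rho^{\otimes j}\,dx$ which must absorb the contributions of the drift terms. The boundedness of $K$ lets one bound the self-interaction terms (b) by $C E^i_{j,N}$ plus a small multiple of the Dirichlet form, and the residual term (c) by $C N^{-2(i+1)}$ times polynomial-in-$j$ factors. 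The delicate term is the coupling term (a): after integration by parts one has roughly $\sum_{k=1}^j \int K(x_k,x_*)\, u\, \nabla_{x_k} u\, \rho^{\otimes (j+1)}$-type expressions involving the $(j+1)$-point relative density, and these are controlled — using Cauchy–Schwarz, the boundedness of $K$, and crucially exchangeability to reduce the $j$ terms in the sum to a single representative — by $\sqrt{E^i_{j,N}}\,\sqrt{E^i_{j+1,N}}$ with a prefactor of order $j$ (or $j^{1/2}$ after optimizing with the Dirichlet form), producing a recursive inequality of the schematic shape
\[
\frac{d}{dt} E^i_{j,N} \leq C\, E^i_{j,N} + C j\, \sqrt{E^i_{j,N}}\,\sqrt{E^i_{j+1,N}} + C j^{2(i+1)} N^{-2(i+1)} \cdot (\text{correction}).
\]
Integrating in time and iterating this from level $j$ up to level $\sim N^{2/3}$ — at which point one uses the crude a priori bound $D_{N,N} \leq Ce^{CNt}$ mentioned in the introduction, or more precisely a level-$J$ bound that is still negligible after being multiplied by the accumulated small factors — closes the induction and yields the claimed rate $(j/N)^{2(i+1)}$, with the constraint $j \leq C^{-1} e^{-Ct^2} N^{2/3}$ emerging from requiring the product of the $\sim N^{2/3}$ recursive factors (each contributing roughly $j/N$ with combinatorial and Gronwall losses) to converge.

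The main obstacle is making the iteration of the recursive inequality rigorous and quantitative: one must choose the right weighted norm at each level $j$, carefully balance how much of the Dirichlet form is spent absorbing the coupling term versus the self-interaction term, and track how the constants accumulate over the $\Theta(N^{2/3})$ steps of the iteration so that the final product is $O((j/N)^{2(i+1)})$ rather than something exponentially large in $N$. The $t^2$ in the exponent and the $N^{2/3}$ threshold both come out of this bookkeeping — the threshold is essentially the point where the number of iterations times the per-step loss is $O(1)$ — so getting the dependence of the constants on $j$, $t$, and the iteration depth exactly right (rather than merely finite) is the technical heart of the proof. A secondary subtlety is that, unlike in the relative-entropy approach, the approximants $\sum_{k=0}^i N^{-k} f^k_j$ need not be probability densities (the higher $f^k_j$ are sign-changing), but since the whole argument is $L^2$-based this causes no difficulty — one never needs positivity, only the lower bound on $\rho$ itself for the weight.
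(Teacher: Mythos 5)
Your overall architecture is the same as the paper's (a weighted $L^2$ energy estimate at each level, a recursive differential inequality in $j$, iteration up the hierarchy to $j+\ell\sim N^{2/3}$ where a crude a priori bound closes it, then iteration in time), but two of the steps you treat as routine are precisely where the argument lives, and as written they do not deliver the stated rate.

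First, the coupling term. You bound it by $Cj\sqrt{E^i_{j,N}}\sqrt{E^i_{j+1,N}}$, which after Young's inequality contributes $+\tfrac{Cj}{2}(E^i_{j,N}+E^i_{j+1,N})$. The paper's Proposition~\ref{prop.l2-hierarchy} instead produces the exact structure $2j\|K\|_{L^\infty}^2\,(x_{j+1}-x_j)$, with a \emph{negative} $-x_j$: this comes from the marginalization identity $\int\gamma^i_{j+1}\,dx_*=\gamma^i_j$ (Proposition~\ref{prop.marginalization}), which turns the cross term in the H\"older step into a perfect difference of squares. That near-cancellation between the $+x_{j+1}$ and $-x_j$ coefficients is what makes the iterated integrals $I^\ell_j$ decay polynomially and then exponentially in $\ell$ (Propositions~\ref{prop.polynomial-bound} and~\ref{prop.exp-bound}); this decay must beat the a priori growth $x_{j+\ell}\leq Ce^{C(j+\ell)t}$ at the top of the ladder, and it is also what allows the short-time lemma to be iterated to arbitrary time horizons. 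With your one-sided Cauchy--Schwarz version the per-level Gr\"onwall factors are uncompensated, and the time-iteration step (where the data at level $j+\ell$ is no longer zero at $t_0$) is not controlled by the bookkeeping you describe. Second, the residual: you assert the truncation error is $Cj^{2(i+1)}N^{-2(i+1)}$ ``times a correction,'' but $R^i_j$ is a sum over $\sim j^2$ pairs $(k,\ell)$, and a naive triangle-inequality bound gives $j^{3+4i}N^{-2(i+1)}$, which for $i\geq 1$ destroys the claimed $j$-dependence. Recovering $j^{2(i+1)}$ is the content of Proposition~\ref{prop.r-ij-bound} and requires the $L^2$ orthogonality coming from the mean-zero marginals of the $g^{i_Q}_Q$, which kills almost all cross terms; this is a genuinely nontrivial combinatorial argument, not a correction factor. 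Finally, a smaller point: the $N^{2/3}$ threshold does not come from ``iteration count times per-step loss'' but from the self-interaction term, whose prefactor $j^3/N^2$ must be $O(1)$ so that the accumulated factors $A^\ell_j=\prod(1+k^2/N^2)$ stay bounded; the $e^{-Ct^2}$ comes from geometrically shrinking the admissible range of $j$ at each time step of the iteration.
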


\begin{remark}
    For $i \geq 1$, the $L^2$-type distance between $f_{j,N}$ and $\sum_{k=0}^i N^{-k} f_j^k$ bounded in~\eqref{eq.main-result-bound} is not a chi-squared divergence, hence does not bound the relative entropy. Nevertheless, an application of H\"older's inequality implies that under the same conditions of Theorem~\ref{thm.main-result-intro},
    \[\Big\|f_{j,N} - \sum_{k=0}^i N^{-k} f_j^k\Big\|_{TV} \leq Ce^{Ct} \Big(\frac{j}{N}\Big)^{i+1}.\]
\end{remark}

\begin{remark}
    We note that in the $i=0$ case, Theorem~\ref{thm.main-result-intro} gives the estimate
    \[\sqrt{\chi^2(f_{j,N} \mid \rho^{\otimes j})}\leq Ce^{Ct}\frac{j}{N},\]
    showing convergence in chi-squared divergence (and hence in relative entropy and total variation) with optimal rate in $N^{-1}$.
\end{remark}

\begin{remark}
    A simple argument shows that the rate 
    \[\int \bigg|\frac{f_{j,N}- \sum_{k=0}^i N^{-k}f^k_j }{\rho^{\otimes j}}\bigg|^2 \rho^{\otimes j}\,dx = O(N^{-2(i+1)})\]
    is optimal for some fixed $j$ and $i$, provided the next order correction $f_j^{i+1}$ is not identically zero. It is not completely straightforward to construct examples for which one can show that $f_j^{i+1} \ne 0$ for some $j$, but it would be extremely surprising if there were no such examples. If that were the case, there would be some $i_*$ such that for any $f$ and $K$, we would have $f^{i}_j =0$ for all $i \geq i_*$ and all $j$. In particular, this would imply that $f_{j,N} - \sum_{k=0}^{i_*} N^{-k} f^k_j$ vanishes faster than any polynomial rate in $N^{-1}$.
\end{remark}

\begin{remark}
    Throughout this paper we assume that the initial data of $f_{j,N}$ is completely tensorized, that is $f_{j,N}(0,\cdot) = f^{\otimes j}$. As is usual, we don't strictly need this to be true; one can show the same bound~\eqref{eq.main-result-bound} at order $i$, provided for all $j,$ 
 the initial data $f_{j,N}(0,\cdot) \in L^1(\Omega^j)$ and satisfies the quantitative bound
        \begin{equation*}
    \int \bigg|\frac{f_{j,N}(0,\cdot)- f^{\otimes j} }{f^{\otimes j}}\bigg|^2 f^{\otimes j}\,dx \leq  C_0\Big(\frac{j}{N}\Big)^{2(i+1)},
    \end{equation*}
    for some $C_0$ independent of $j$. Of course then the constant $C$ of the bound~\eqref{eq.main-result-bound} would then depend on $C_0.$ We omit this argument as it adds notational complexity without adding any real content.
\end{remark}

\begin{remark}
    We note the restriction $j = o(N^{2/3})$. This is not very constraining and still shows strong bounds along a broad class of simultaneous limits of $(j, N) \to (\infty,\infty)$---these simultaneous limits are sometimes called \textit{increasing propagation of chaos}~\cite{ben_arous_increasing_1999,miclo_genealogies_2001}. The restriction is however worse than in~\cite{lacker_hierarchies_2023}, which allows $j = O(N)$. This restriction originates from the prefactor $\frac{j^3}{N^2}$ that appears on a term in the fundamental energy-type estimate given in Proposition~\ref{prop.l2-hierarchy}. We need the prefactor of this term to be $O(1)$ in order to not cause growth when the hierarchy of differential inequalities is iterated, thus we give the requirement that $j = O(N^{2/3})$. The time decay in the upper bound on $j$, that $j \leq C^{-1} e^{-Ct^2} N^{2/3}$---and hence that $j = o(N^{2/3})$---then comes from the iteration of a short time argument which requires us to restrict to a smaller set of $j$ on each iteration.
\end{remark}

\begin{remark}
    Theorem~\ref{thm.main-result-intro} in particular implies that for fixed $j$
    \[N(f_{j,N} - \rho^{\otimes j}) \stackrel{TV}{\to} f^1_j,\]
    and similarly for the higher-order $f^i_j$. This justifies that the $f^i_j$ are the natural next order corrections. We note that due to the $N$-dependence of the higher-order corrections in~\cite{paul_asymptotic_2019}, no such result is available in their analysis.
\end{remark}

\begin{remark}
    The interaction $K$ has not been assumed to be symmetric nor has $K(x,x)$ been assumed to be $0$. In particular, we allow
    \[K(x,y):=b(x)+\hat K(x-y)\]
    where $b$ is a drift affecting all particles and $\hat K$ is a translation-invariant pairwise interaction.
\end{remark}

\begin{remark}
    We note that we could have additionally taken $K$ to depend on time, that is the proof follows identically taking $K \in L^\infty([0,\infty), L^\infty_\delta(\Omega^2)).$ Since we have no particularly interesting examples involving time-dependent $K$ in mind, we omit this case for brevity. 

    We also note that we could consider inhomogeneous diffusions, that is allowing the Brownian motion to have a space-time dependent diffusion matrix prefactor $\sigma(t,x)$ in the defining SDE~\eqref{eq.SDE}. For uniformly elliptic diffusions, that is those such that there exist $\Lambda<\infty$ such that for every $x \in \Omega, t \in [0,\infty),$ and $v \in \R^d,$ 
    \[\Lambda^{-1} |v|^2 \leq |\sigma(t,x) v|^2 \leq \Lambda|v|^2,\]
    the formal computations allowing us to deduce Theorem~\ref{thm.main-result-intro} follow identically, with a new constant depending on $\Lambda$. Rigorously justifying all of the necessary computations with this inhomogeneous diffusion would require significantly more technical parabolic theory than we currently use, though we strongly believe that this would introduce no new essential difficulties. For simplicity and legibility, we omit this case.
\end{remark}

\begin{remark}
    \label{rmk.i=1-case}
     In order to make the general higher-order corrections more concrete, here we explicitly give the first-order corrections: the $g^1_j$ and $f^1_j$. All the $g^1_j=0$ except for $j=1,2$. Letting $\rho$ be the unique solution to the McKean-Vlasov equation~\eqref{eq.Mckean-Vlasov-equation}, $g^1_2$ solves the equation 
    \begin{align*}
        \partial_t g^1_2 &- \Delta g^1_2 + \nabla_x \cdot \int K(x,x_*) \big(\rho(x) g^1_2(y,x_*)+\rho(x_*) g^1_2(x,y)\big)\,dx_* 
        \\&\qquad\qquad+ \nabla_y \cdot \int K(y,x_*)  \big(\rho(y) g^1_2(x,x_*)   +\rho(x_*) g^1_2(x,y)\big)\,dx_* 
        \\&= \nabla_x \cdot \int K(x,x_*)\rho(x_*) \rho(x)\rho(y)\,dx_* + \nabla_y \cdot \int K(y,x_*)\rho(x_*) \rho(x)\rho(y)\,dx_* 
        \\&\qquad\qquad- \nabla_x \cdot \big(K(x,y) \rho(x)\rho(y)\big) -\nabla_y \cdot\big(K(y,x) \rho(x)\rho(y)\big).
    \end{align*}
    The equation for $g^1_1$ is
    \begin{align*}
        \partial_t g^1_1 &- \Delta g^1_1 + \nabla \cdot \int K(x,x_*) \big(g^1_1(x_*) \rho(x) + \rho(x_*) g^1_1(x)\big)\,dx_* 
        \\&= \nabla \cdot \int K(x,x_*) \big(\rho(x_*)\rho(x) -g^1_2(x,x_*)\big)\,dx_* - \nabla \cdot \big(K(x,x) \rho(x)\big).
    \end{align*}
    Then for any $j$, $f^1_j$ is given by
    \[f^1_j = \sum_{k=1}^j g^1_1(x_k) \rho^{\otimes (j-1)}(x^{[j] - \{k\}}) + \sum_{1 \leq k <\ell \leq j} g^1_2(x_k,x_\ell) \rho^{\otimes (j-2)}(x^{[j] - \{k,\ell\}}).\]
    We note that the equation for $g^1_2$ only depends on $\rho$, the equation for $g^1_1$ only depends on $\rho$ and $g^1_2$, and $f^1_j$ is computable for any $j$ in terms of the three functions $\rho, g^1_1$, and $g^1_2.$
\end{remark}

\subsection{Overview of the argument}
\label{ss.overview}

We first introduce the motivation and construction of the higher-order corrections $f^i_j$ through the cluster expansion and perturbation theory. We then explain the $L^2$ analysis of the BBGKY hierarchy that allows us to prove the bound~\eqref{eq.main-result-bound}.

\subsubsection{Higher-order corrections to propagation of chaos}

One formal argument for propagation of chaos is given by discarding terms of order $N^{-1}$ in the hierarchy~\eqref{eq.bbgky-sketch}, which gives the hierarchy
\[\begin{cases}
    \partial_t f_{j}^0 - \Delta f_{j}^0={-}\sum_{k=1}^j \nabla_{x_k} \cdot \int K(x_k, x_*) f_{j+1}^0(x,x_*)\,dx_*,\\
    f_{j}^0(0,\cdot) = f^{\otimes j},
\end{cases}\]
where the notation $f_j^0$ is due to the fact we are only keeping track of terms to $0$th order in $N^{-1}$. One can then note that $f_{j}^0 := \rho^{\otimes j}$ is a solution to this system. Thus the tensor product $\rho^{\otimes j}$ is formally the $0$th order term of a perturbative expansion of $f_{j,N}$. We are then interested in the higher-order terms of this expansion, so we formally suppose
\begin{equation}
    \label{eq.f-jN-perturb-exp-intro}
f_{j,N} = \sum_{i=0}^\infty N^{-i}f^i_j. 
\end{equation}
Collecting orders of $N^{-1}$, we get that $f^i_j$ solves the equation~\eqref{eq.f-ij-eqs-intro}. We note that for each $i$, this is an infinite hierarchy of equations in $j$, with forcing depending on $\{f^{i-1}_j  :j \in \N\}$. It is not at all clear how to directly construct solutions to these hierarchies. 

\begin{figure}[H]
\[\begin{tikzcd}
	{} & {} & {} \\
	{f_3^0} & {f_3^1} & {f_3^2} & {} \\
	{f_2^0} & {f_2^1} & {f_2^2} & {} \\
	{f_1^0} & {f_1^1} & {f_1^2} & {}
	\arrow[from=2-1, to=1-1]
	\arrow[from=2-2, to=1-2]
	\arrow[from=2-2, to=2-1]
	\arrow[from=2-3, to=1-3]
	\arrow[from=2-3, to=2-2]
	\arrow[from=2-4, to=2-3]
	\arrow[from=3-1, to=2-1]
	\arrow[from=3-2, to=2-2]
	\arrow[from=3-2, to=3-1]
	\arrow[from=3-3, to=2-3]
	\arrow[from=3-3, to=3-2]
	\arrow[from=3-4, to=3-3]
	\arrow[from=4-1, to=3-1]
	\arrow[from=4-2, to=3-2]
	\arrow[from=4-2, to=4-1]
	\arrow[from=4-3, to=3-3]
	\arrow[from=4-3, to=4-2]
	\arrow[from=4-4, to=4-3]
\end{tikzcd}\]
\caption{The dependency graph for $f_j^i$.}
\end{figure}
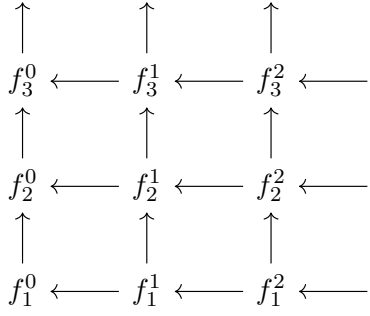

To solve this problem, we use the \textit{cluster (or cumulant) expansion}. That is, we express the $f_{j,N}$ in terms of a family of exchangeable functions $g_{1,N},...,g_{N,N}$, namely
\begin{equation}
    \label{eq.f-jN-cluster-exp-intro}
f_{j,N} = \sum_{\pi \vdash [j]} \prod_{P \in \pi} g_{|P|,N}(x^P).
\end{equation}
From this ansatz, one can deduce an inversion formula
\begin{equation} 
\label{eq.g-ij-interms-f-ij}
g_{j,N} = \sum_{\pi \vdash [j]} (-1)^{|\pi| -1} (|\pi|-1)!\prod_{P \in \pi} f_{|P|,N}(x^P),\end{equation}
which defines the $g_{\ell,N}$ in terms of the $f_{j,N}$. The BBGKY hierarchy~\eqref{eq.bbgky-sketch} then induces the hierarchy of equations~\eqref{eq.cluster-pde} on the $g_{\ell,N}$. We can then take formal perturbative expansion of the $g_{\ell,N}$, writing
\begin{equation}
\label{eq.g-kN-expansion-intro}
g_{\ell,N} = \sum_{k=0}^\infty N^{-k} g^k_\ell,
\end{equation}
and collect orders in equation~\eqref{eq.cluster-pde} to get equation~\eqref{eq.g-ij-PDE} on the $g^k_\ell$. Unlike the equations~\eqref{eq.f-ij-eqs-intro}, the equations for the $g^k_\ell$ can be inductively solved. Then plugging the expansion~\eqref{eq.g-kN-expansion-intro} into the cluster expansion~\eqref{eq.f-jN-cluster-exp-intro} and collecting orders of $N^{-1}$, we formally find representation of the $f^i_j$ of~\eqref{eq.f-jN-perturb-exp-intro} in terms of the $g^i_j$, as given in~\eqref{eq.f-ij-def}. Theorem~\ref{thm.f-ij-existence-intro} then gives that this expression for $f^i_j$ in terms of the $g^i_j$ actually solves the equation~\eqref{eq.f-ij-eqs-intro} we formally expect it to.

\begin{remark}
    Note~\eqref{eq.main-result-bound} gives that
    \[f_{j,N} = \sum_{k=0}^i N^{-k} f^k_j + O(N^{-(i+1)}).\]
    Inserting this approximation into~\eqref{eq.g-ij-interms-f-ij} and using the definition of the $f^k_j$,~\eqref{eq.f-ij-def}, one can show that
    \[g_{j,N} = \sum_{k=0}^i N^{-k} g^k_j + O(N^{-(i+1)}).\]
    Since, as noted in Proposition~\ref{prop.g-ij-intro} $g^k_j = 0$ for $k \leq j-2$, we see by letting $i= j-2$ that 
    \[g_{j,N} = O(N^{-(j-1)}).\]
    This shows that $g_{j,N}$ are small for all $j\geq 2$ and in particular allows estimates on the joint cumulants of observables on $j$ particles. That is, for any $\phi_1,\dotsc,\phi_j\in C^0(\Omega,\R)$ we have that
    \begin{align*}
    \kappa\Big(\phi_1\big(X_{1,N}(t)\big),\dotsc, \phi_j\big(X_{j,N}(t)\big)\Big)&= \int \prod_{k=1}^j \phi_k(x_k)\, g_{j,N}(t,x_1,\dotsc,x_j)\,dx 
    \\&\leq \prod_{k=1}^j\|\phi_k\|_{C^0} \|g_{j,N}\|_{TV} 
    \\&= O(N^{-(j-1)}),
    \end{align*}
    where $\kappa(Z_1,..,Z_j)$ denotes the joint cumulant of $Z_1,...,Z_j$. Thus the results of this paper in particular show the smallness of joint cumulants of observables of many particles, with a rate getting very small as the number of particles gets large. We note that these estimates on cumulants are related to the Bogolyubov corrections---a version of these bounds on the cumulants in the context of second-order in time interacting particle systems conjectured by physicists~\cite{bogolyubov_problems_1960}.
\end{remark}

\subsubsection{$L^2$ hierarchy estimates}

We now sketch the $L^2$-based estimates on the BBGKY hierarchy. Fundamentally we are concerned with estimating the size of solutions to the hierarchy
\begin{equation}
\label{eq.gamma-j-bbgky}
        \partial_t \gamma_{j} - \Delta \gamma_{j}+\frac{1}{N} \sum_{k,\ell=1}^j \nabla_{x_k} \cdot (K(x_k, x_\ell) \gamma_{j}) + \frac{N-j}{N} \sum_{k=1}^j \nabla_{x_k} \cdot \int K(x_k, x_*) \gamma_{j+1}(x,x_*)\,dx_* = \nabla \cdot R_j,
\end{equation}
where the $\gamma_j$ have initial data $\gamma_j(0,\cdot)=0.$ In particular, for Theorem~\ref{thm.main-result-intro} we take for fixed $i,$ 
\[\gamma_j = f_{j,N} - \sum_{k=0}^i N^{-k} f^i_j.\]
By construction, this $\gamma_j$ satisfies~\eqref{eq.gamma-j-bbgky} with an error $R_j$ such that $R_j = O(N^{-(i+1)}).$ The goal then is to show that $\gamma_j = O(R_j)$. This is accomplished by noting
    \begin{align}
        \frac{d}{dt} \int \Big|\frac{\gamma_j}{\rho^{\otimes j}}\Big|^2 \rho^{\otimes j}\,dx &\leq 2 j \|K\|_{L^\infty_\delta}^2 \Bigg(\int \Big|\frac{\gamma_{j+1}}{\rho^{\otimes (j+1)}}\Big|^2 \rho^{\otimes (j+1)}\,dx_*dx - \int \Big|\frac{\gamma_j}{\rho^{\otimes j}}\Big|^2 \rho^{\otimes j}\,dx \Bigg)
        \notag\\&\qquad + 4\frac{j^3}{N^2}\|K\|_{L^\infty_\delta}^2 \int \Big|\frac{\gamma_j}{\rho^{\otimes j}}\Big|^2 \rho^{\otimes j}\,dx + 2 \int \Big|\frac{R_j}{\rho^{\otimes j}}\Big|^2 \rho^{\otimes j}\,dx,
        \label{eq.l2-hierarchy-intro}
    \end{align}
which is shown (see Proposition~\ref{prop.l2-hierarchy} for details) by directly expanding the time derivative on the left hand side and using the equations solved by $\gamma_j$ and $\rho^{\otimes j}$. This estimate is in many way analogous to~\cite[Equation (1-17)]{lacker_hierarchies_2023}. The bound is also used similarly. In particular letting $\beta := 4\|K\|_{L^\infty_\delta}^2$ and
\[x_j :=  \int \Big|\frac{\gamma_j}{\rho^{\otimes j}}\Big|^2 \rho^{\otimes j}\,dx,\quad r_j := 2 \int \Big|\frac{R_j}{\rho^{\otimes j}}\Big|^2 \rho^{\otimes j}\,dx,\]
\eqref{eq.l2-hierarchy-intro} implies
\begin{equation}
    \label{eq.x-j-intro}
\dot x_j \leq \beta j (x_{j+1} - x_j) +\beta\frac{j^3}{N^2}x_j + r_j.
\end{equation}
In Proposition~\ref{prop.r-ij-bound}, we show that
\[r_j \leq C e^{Ct} \Big(\frac{j}{N}\Big)^{2(i+1)},\]
where $C$ does not depend on $j$. It is worth noting that a more naive bound on $r_j$ would give a suboptimal rate in $j$---giving $j^{3+4i}$ instead of $j^{2(i+1)}$---but by taking advantage of certain $L^2$ orthogonality, the above bound can be shown.

Now what remains to be shown is the $r_j$ are the leading order contribution to the size of the $x_j$. To see this, $x_j$ is controlled by iteratively applying Gr\"onwall's inequality to~\eqref{eq.x-j-intro}, which gives an estimate of the form
\begin{equation}
    \label{eq.ode-hierarchy-no-t0-intro}
    x_j(t) \leq CI^\ell_{j}(t)\sup_{s \in [0, t]} x_{j+\ell}(s) + C\sum_{k=0}^{\ell-1} I_j^{k+1}(t) \frac{r_{j+k}}{j+k},
\end{equation}
provided $j+\ell \leq N^{2/3}$ and where the $I^\ell_j$, defined in Definition~\ref{def.I-lj-def}, are certain iterated exponential integrals. The $I^\ell_j$ admit the estimates
\begin{equation}
\label{eq.I-lj-poly-decay-intro}
I^\ell_j(t) \leq \Big(\frac{j+b}{j+\ell}\Big)^b e^{\beta b t},\quad b \in \N.
\end{equation}
By taking $b = 2i+3$ and applying this bound we appropriately control the second term of~\eqref{eq.ode-hierarchy-no-t0-intro}, namely
\[C\sum_{k=0}^{\ell-1} I_j^{k+1}(t) \frac{r_{j+k}}{j+k} \leq C e^{Ct} \Big(\frac{j}{N}\Big)^{2(i+1)}.\]
The remaining issue is to bound $I^\ell_{j}(t)\sup_{s \in [0, t]} x_{j+\ell}(s)$. In~\cite{lacker_hierarchies_2023}, a simple \textit{a priori} bound\footnote{By \textit{a priori} we mean that the bound is in some way independent of the perturbation theory and is rather just an initial estimate of size.} on the analog to $x_k$ was available, giving that $x_k \leq Ckt.$ No such bound is available in our setting. The best \textit{a priori} bound we have is given by~\eqref{eq.l2-apriori} which implies that $x_k \leq Ce^{Ckt}.$
Thus the best bound of the remaining term that we have available is 
\[I^\ell_{j}(t)\sup_{s \in [0, t]} x_{j+\ell}(s)\leq C e^{C(j+\ell)t} I^\ell_{j}(t).\]
For this, using~\eqref{eq.I-lj-poly-decay-intro} with fixed $b$ is insufficient, as the exponential growth will always beat polynomial decay. Instead, by optimally choosing $b$ in~\eqref{eq.I-lj-poly-decay-intro}, one can deduce the exponential decay estimate
    \[I^\ell_j(t) \leq \exp(-\tfrac{1}{3} e^{-\beta t -1} \ell),\quad \text{for } j \leq \tfrac{1}{3} e^{-\beta t-1} \ell.\]
By correctly choosing $\ell$ and constraining $j$, using this estimate one can show for sufficient small times,
\[I^\ell_{j}(t)\sup_{s \in [0, t]} x_{j+\ell}(s)\leq C e^{C(j+\ell)t} I^\ell_{j}(t) \leq C \Big(\frac{j}{N}\Big)^{2(i+1)}.\]
From this, we then get for some $t_*$ and for all $t \leq t_*$, $j \leq C^{-1} N^{2/3}$,  
\[\int \Big|\frac{\gamma_j}{\rho^{\otimes j}}\Big|^2 \rho^{\otimes j}\,dx = x_j \leq C \Big(\frac{j}{N}\Big)^{2(i+1)}.\]
This is of course only a short time result and is essentially what is shown in Lemma~\ref{lem.l2-iter}. It turns out one can essentially iterate this argument to get the result for all times, though substantial care needs to be taken in propagating the correct estimates in time. See Lemma~\ref{lem.l2-iter} for details.

\begin{remark}
    We emphasize here that it is essential to consider 
    \[\int \Big|\frac{\gamma_j}{\rho^{\otimes j}}\Big|^2 \rho^{\otimes j}\,dx\] 
    instead of $\int |\gamma_j|^2\,dx$. That is all $L^2$-type norms should always be taken with respect to the measure $\rho$, not with respect to the Lebesgue measure. To see this, we note that the perturbative corrections $f^i_j$ are small in the $\rho$-adapted $L^2$-type norm but large in the $L^2$ norm with respect to Lebesgue measure. That is
    \[  \int \Big|\frac{f^i_j}{\rho^{\otimes j}}\Big|^2 \rho^{\otimes j}\,dx \leq C j^{2i} \text{ but } \int |f^i_j|^2\, dx \approx C^j,\]
    with a constant independent of $j$. This is particularly clear for $f^0_j = \rho^{\otimes j}$, for which 
        \[  \int \Big|\frac{\rho^{\otimes j}}{\rho^{\otimes j}}\Big|^2 \rho^{\otimes j}\,dx =1 \text{ but } \int |\rho^{\otimes j}|^2\, dx = \|\rho\|_{L^2}^j.\]
    This smallness in the $\rho$-adapted norm is important in many places. In particular, it allows us to show the remainder term in the equation for $\gamma_j$~\eqref{eq.gamma-j-bbgky} are appropriately small.
\end{remark}

\subsubsection{Organization of the argument}

In Section~\ref{s.algebra}, we give all of the algebraic results of the cluster expansion and perturbation theory. In Subsection~\ref{ss.cluster-expansion}, we introduce the cluster expansion and the hierarchy of equations solved by the terms of the cluster expansion. In Subsection~\ref{ss.cluster-perturbative}, we perturbatively expand the terms of the cluster expansion and introduce the hierarchy of equations solved by these perturbative approximations. We then use the perturbative expansion of the terms of the cluster expansion to construct a perturbative expansion of the marginal densities. We then note the equations solved by the terms of the perturbative expansion of the marginal densities. In Subsection~\ref{ss.basic-properties}, we supply a proof of Proposition~\ref{prop.g-ij-intro}. Theorem~\ref{thm.f-ij-existence-intro} is a direct consequence of the results of Section~\ref{s.algebra}, as will be made clear. Many of the proofs of the propositions stated in Section~\ref{s.algebra} will be deferred to Section~\ref{s.algebraic-proofs}, as they elementary and unenlightening. 

In Section~\ref{s.hierarchy}, we proceed with the analytic work of proving Theorem~\ref{thm.main-result-intro}. In order to justify the integration by parts necessary to show~\eqref{eq.l2-hierarchy-intro}, in this section we restrict our analysis to the case that $\Omega = \T^d$ and the initial data $f$ is such that $f + f^{-1} \in L^\infty(\T^d)$. This is done to guarantee the formal integration by parts computations of Section~\ref{s.hierarchy} are justified. We start by proving a hierarchical ``energy estimate'' for the difference between $f_{j,N}$ and its perturbative approximation to finite order. The resulting bound can be viewed as a hierarchy of differential inequalities only involving time derivatives. We then note basic estimates of the terms involved, though the proofs of these estimates are deferred to the end of the section, Subsection~\ref{ss.proofs-of-bounds}, in order to not distract from the main analytic techniques for showing the $L^2$ bound. In Subsection~\ref{ss.ODE}, we prove estimates on hierarchies of differential inequalities. In Subsection~\ref{ss.proof-of-main-result}, we use the estimates of Subsection~\ref{ss.ODE} together with the ``energy estimate'' to prove Theorem~\ref{thm.main-result-intro} in this restricted setting, giving Proposition~\ref{prop.main-result-special}. The resulting constants are uniform in the $L^\infty$ bounds on $f + f^{-1}$ as well as the sidelength of the torus. In Section~\ref{s.extend}, we use this uniformity in the constants to complete the proof of Theorem~\ref{thm.main-result-intro} in the general setting. 

\subsection{Acknowledgments}

The authors thank Daniel Lacker and Sylvia Serfaty for stimulating discussions about the presentation and context of the main results. EHC acknowledges the support of the Natural Sciences and Engineering Research Council of Canada (NSERC), [funding reference number PGSD3-557776-2021]. La recherche de EHC a  \'et\'e financ\'ee par le Conseil de recherches en sciences naturelles et en g\'enie du Canada (CRSNG), [num\'ero de r\'ef\'erence PGSD3-557776-2021]. KR was partially supported by NSF grants DMS-1954357 and DMS-2000200 as well as a Simons Foundation grant.

\section{Cluster expansions and perturbation theory}
\label{s.algebra}

For the remainder of the paper, we suppress the dependence of $f_{j,N}$ on $N$, simply writing $f_j$. In order to simplify the presentation of the algebra, we introduce abstract notation for the operators appearing the in BBGKY hierarchy. 
\begin{definition}
    \label{def.H-k-S-kl}
    Let $P \subseteq \N \cup \{*\}$\footnote{Note here we are taking $*$ as some index distinct from $k \in \N$. It will always be used for the non-local operator $H_k$, for which it acts an index for the integration variable.} with $|P| <\infty$ and $h : \Omega^P \to \R$. Then for any $k,\ell \in P$ such that $k,\ell \ne *$, we define
    \[S_{k,\ell} h : \Omega^P \to \R\]
    by
    \[S_{k,\ell} h(x) := \nabla_{x_k} \cdot (K(x_k,x_\ell) h(x)).\]
    Then, provided $* \in P$, for any $k \in P$ such that $k \ne *$, we define
    \[H_k h : \Omega^{P - \{*\}} \to \R\]
    by
    \[H_k h(x^{P - \{*\}}) := \nabla_{x_k} \cdot \int K(x_k, x_*) h(x)\,dx_*.\]
\end{definition}

With this notation, we can rewrite the BBGKY hierarchy~\eqref{eq.bbgky-sketch} abstractly as
\begin{equation}\label{eq.hierarchy-equation}
    \partial_t f_{j} - \Delta f_{j} + \frac{N-j}{N}\sum_{k \in [j]} H_k f_{[j] \cup \{*\}} + \frac{1}{N} \sum_{k,\ell \in [j]} S_{k,\ell} f_{j} =0.
\end{equation}

\subsection{Cluster expansion}

\label{ss.cluster-expansion}

We now introduce the cluster expansion of the $f_j$.

\begin{definition}
    Let $g_j : \Omega^{j} \to \R$ be the exchangeable functions given by
    \begin{equation}\label{eq.forward-cluster-definition}
        g_j := \sum_{\pi \vdash [j]} (-1)^{|\pi| -1} (|\pi|-1)!\prod_{P \in \pi} f_P.
    \end{equation}
    We call the function $g_j$ the \textit{$j$th cluster function of the distribution.}
\end{definition}

\begin{remark}
    We note the expression of $g_j$ in terms of the $f_k$ is analogous to the expression the joint cumulant of a collection of $j$ random variable in terms of their joint moments.
\end{remark}

Although this dependence is suppressed, the $g_j$ depend on $N$ through their dependence on the $f_j$. The $g_j$ are defined exactly so that the following expansion for the $f_j$ holds.

\begin{proposition}\label{prop.cluster-representation}
    \[f_j = \sum_{\pi \vdash [j]} \prod_{P \in \pi} g_P.\]
\end{proposition}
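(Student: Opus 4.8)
## Proof Proposal

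The plan is to prove this purely combinatorially: the two formulas \eqref{eq.forward-cluster-definition} defining $g_j$ in terms of the $f_P$ and the claimed expansion $f_j = \sum_{\pi \vdash [j]} \prod_{P \in \pi} g_P$ are Möbius-inversion duals over the partition lattice. Since both statements are identities among exchangeable functions depending only on the combinatorial structure of partitions (the specific meaning of $f_P$ as a marginal density plays no role), it suffices to verify the inversion at the level of formal coefficients. The cleanest route is to substitute one formula into the other and check that everything collapses.

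Concretely, I would start from the candidate identity $f_j = \sum_{\pi \vdash [j]} \prod_{P \in \pi} g_P$, substitute the definition \eqref{eq.forward-cluster-definition} of each $g_P$ (applied to the block $P$ of size $|P|$, with its own sub-partition $\sigma_P \vdash P$), and interchange the sums. The result is a sum over pairs $(\pi, (\sigma_P)_{P \in \pi})$ where $\pi \vdash [j]$ and each $\sigma_P$ refines $P$; such a pair is equivalent to a single partition $\tau \vdash [j]$ (the common refinement $\bigcup_P \sigma_P$) together with a coarsening $\pi \geq \tau$. So the whole expression reorganizes as $\sum_{\tau \vdash [j]} \big(\prod_{Q \in \tau} f_Q\big) \sum_{\pi \geq \tau} \prod_{P \in \pi}(-1)^{|\sigma_P|-1}(|\sigma_P|-1)!$, where $\sigma_P = \{Q \in \tau : Q \subseteq P\}$. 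The inner sum factorizes over the blocks of $\pi$ and ultimately reduces to a sum over partitions of the index set $\tau$ (viewed as a set of $|\tau|$ points) of the quantity $\prod_P (-1)^{|P|-1}(|P|-1)!$. The key fact is then that for any finite set $S$ with $|S| = n \geq 1$,
\[
\sum_{\pi \vdash S} \prod_{P \in \pi} (-1)^{|P|-1}(|P|-1)! = \begin{cases} 1 & n = 1,\\ 0 & n \geq 2,\end{cases}
\]
which is precisely the statement that the Möbius function of the partition lattice, evaluated against the "top–to–bottom" chain, is $(-1)^{n-1}(n-1)!$ on each block; equivalently it is the moment–cumulant relation applied to a constant random variable, or can be proved directly by an exponential–generating–function argument ($\log$ and $\exp$ are inverse, i.e. $\sum_{n\geq 1} (-1)^{n-1}(n-1)! \frac{z^n}{n!} = \log(1+z)$ composed appropriately). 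Applying this identity forces $|\tau| = 1$, i.e. $\tau = \{[j]\}$, leaving exactly $f_j$, which closes the computation.

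The main obstacle is purely bookkeeping: cleanly setting up the bijection between "partition-of-$[j]$-with-a-refinement-on-each-block" and "refinement-$\tau$-with-a-coarsening-$\pi$", and then correctly tracking that the combinatorial coefficient attached to each block $P$ of $\pi$ depends only on the number of $\tau$-blocks it contains, not on their sizes—so that the sum genuinely reduces to the Möbius identity above on a set of size $|\tau|$. There is no analytic content whatsoever; all functions are bounded and the sums are finite, so interchanging summations is trivially justified. One could alternatively prove the statement by strong induction on $j$: assuming the expansion holds for all $k < j$, solve \eqref{eq.forward-cluster-definition} for $f_j$ in terms of $g_j$ and lower $f_P$'s, substitute the inductive hypothesis for those lower marginals, and check the coefficients match—but the generating-function / Möbius argument is more transparent and is the version I would write up.
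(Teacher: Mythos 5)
Your argument is correct, and it is a genuine (if mild) variant of the paper's proof rather than a reproduction of it. The paper proceeds by strong induction on $j$: it starts from the definition \eqref{eq.forward-cluster-definition} of $g_j$, splits off the $\pi=\{[j]\}$ term, substitutes the inductive hypothesis for the lower-order marginals $f_P$, and then invokes Lemma~\ref{lem.combinings-combinatorics}, which evaluates $\sum_{\sigma \leq \pi} (-1)^{|\sigma|-1}(|\sigma|-1)!$ over \emph{combinings} of a fixed partition (a weight depending on the number of blocks of $\sigma$). You instead run the substitution in the opposite direction — plug the definition of each $g_P$ into the candidate expansion of $f_j$ — which avoids induction entirely and reduces everything to the dual identity $\sum_{\Pi \vdash S} \prod_{\alpha \in \Pi} (-1)^{|\alpha|-1}(|\alpha|-1)! = \delta_{|S|=1}$, i.e.\ the weight now depends on block \emph{sizes}. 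Both identities are two faces of the same $\log$/$\exp$ inversion (the paper derives its version from Fa\`a di Bruno applied to $\log e^x$; yours comes from $\exp(\log(1+z))=1+z$), and both proofs hinge on the same bijection between coarsenings of a partition $\tau$ and partitions of the index set $\tau$, which the paper also uses inside its lemma. What your route buys is a self-contained, non-inductive verification that makes the M\"obius-inversion structure explicit; what the paper's route buys is that the single combinatorial Lemma~\ref{lem.combinings-combinatorics} it isolates is reused verbatim in the (much heavier) proofs of Propositions~\ref{prop.cluster-function-equation} and~\ref{prop.f-ij-equations}, so the inductive bookkeeping is amortized. Your write-up would be complete once the bijection between pairs $(\pi,(\sigma_P)_{P\in\pi})$ and pairs $(\tau,\pi)$ with $\pi$ a coarsening of $\tau$ is spelled out, which is exactly the bookkeeping you flag.
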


This is a classical combinatorial fact frequently used to relate moments and cumulants of random variables. The proof is found in Section~\ref{s.algebraic-proofs}.

\begin{remark}
    The $g_j$ have the marginalization property that for any $j\geq 2$ and any $1 \leq \ell \leq j$,
    \[\int g_j\,dx_\ell =0.\]
    We don't need this property, so we omit its proof. The argument uses the same elementary combinatorics as the rest of the proofs of the results of this section. We will however use the same marginalization property for the terms perturbative expansion of the $g_j$, $g^i_j$, which is noted in Lemma~\ref{lem.marginalization}.
\end{remark}

By taking the time derivative of~\eqref{eq.forward-cluster-definition} and using the BBGKY hiearchy~\eqref{eq.hierarchy-equation}, we see that the $g_j$ themselves solve equations, which we now give.

\begin{proposition}\label{prop.cluster-function-equation}
    For fixed $N$, the cluster functions $g_j$, $1\leq j\leq N$, solve the hierarchy of equations 
    \begin{align}
        \partial_t g_j - \Delta g_j &\notag= -\frac{N-j}{N}\sum_{k=1}^j H_k g_{[j]\cup\{*\}}+ \sum_{k=1}^j \sum_{W \subseteq [j] - \{k\}} \frac{j-1 - |W|}{N} H_k g_{W \cup \{k,*\}} g_{[j] - \{k\} - W}
        \\&\notag\quad - \frac{N-j}{N}\sum_{k=1}^j \sum_{W \subseteq [j] -\{k\}} H_k g_{W \cup \{k\}} g_{[j]\cup \{*\} - W -\{k\}}
        \\&\notag\quad + \sum_{k=1}^j \sum_{W \subseteq [j] - \{k\}} \sum_{R \subseteq [j] -\{k\} - W}\frac{j -1 - |W| - |R|}{N} H_k g_{W \cup \{k\}}  g_{R \cup \{*\}} g_{[j] - R - W-\{k\}}
        \\&\quad - \frac{1}{N}\sum_{k,\ell=1}^j S_{k,\ell} g_j- \frac{1}{N} \sum_{\substack{k,\ell=1\\ k \ne \ell}}^j \sum_{W \subseteq [j] - \{k,\ell\}} S_{k,\ell} g_{W \cup \{k\}} g_{[j] - \{k\} - W},\label{eq.cluster-pde}
    \end{align}
    with initial conditions
    \[
    g_j(0,\cdot)=
    \begin{cases}
        f&j=0,\\
        0&j\geq 1.
    \end{cases}
    \]
\end{proposition}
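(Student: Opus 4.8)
The plan is the one signposted just before the statement: differentiate the defining formula~\eqref{eq.forward-cluster-definition} for $g_j$ in time, substitute the abstract BBGKY hierarchy~\eqref{eq.hierarchy-equation} for the time derivatives of the marginals, re-expand the resulting products of marginals back into cluster functions via Proposition~\ref{prop.cluster-representation}, and then carry out the (purely combinatorial) matching of coefficients. Throughout, write $c_\pi := (-1)^{|\pi|-1}(|\pi|-1)!$, so that $g_j = \sum_{\pi\vdash[j]}c_\pi\prod_{P\in\pi}f_P$.

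\textbf{Differentiation and the Laplacian.} By the product rule, $\partial_t g_j = \sum_{\pi}c_\pi\sum_{Q\in\pi}(\partial_t f_Q)\prod_{P\neq Q}f_P$ (this, and all the manipulations below, being justified by the regularity of the marginals supplied by the well-posedness theory). Since each coordinate $x_k$, $k\in[j]$, appears in exactly one factor of $\prod_{P\in\pi}f_P$, the Leibniz rule for $\Delta=\sum_{k\in[j]}\Delta_{x_k}$ collapses to $\Delta\big(\prod_P f_P\big)=\sum_{Q\in\pi}(\Delta f_Q)\prod_{P\neq Q}f_P$, hence $\Delta g_j=\sum_\pi c_\pi\sum_{Q\in\pi}(\Delta f_Q)\prod_{P\neq Q}f_P$ as well. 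Subtracting and inserting the BBGKY equation for $f_Q$ (valid for any $|Q|$-element set $Q$ by exchangeability, in the form $(\partial_t-\Delta)f_Q=-\tfrac{N-|Q|}{N}\sum_{k\in Q}H_kf_{Q\cup\{*\}}-\tfrac1N\sum_{k,\ell\in Q}S_{k,\ell}f_Q$) gives $(\partial_t-\Delta)g_j=-\sum_\pi c_\pi\sum_{Q\in\pi}\big(\tfrac{N-|Q|}{N}\sum_{k\in Q}H_kf_{Q\cup\{*\}}+\tfrac1N\sum_{k,\ell\in Q}S_{k,\ell}f_Q\big)\prod_{P\neq Q}f_P$.

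\textbf{Re-clustering and reorganizing.} Now replace each spectator factor $f_P$ by $\sum_{\sigma\vdash P}\prod_{C\in\sigma}g_C$ and the acted-on factor $f_{Q\cup\{*\}}$ (resp.\ $f_Q$) by its cluster expansion. Because $H_k$ and $S_{k,\ell}$ are first order in $x_k$, the Leibniz rule makes them act on only the one cluster factor containing $k$ and, for $H_k$, the factor containing $*$ (for $S_{k,\ell}$, on at most the two factors containing $k$ and $\ell$). Each resulting term is then indexed by a partition $\rho$ of $[j]\cup\{*\}$ (or of $[j]$) into cluster blocks, a choice of $Q,k$ (and $\ell$), and a coarsening $\pi$ of $\rho$ whose distinguished block $Q\cup\{*\}$ necessarily contains the $\rho$-block(s) struck by the operator. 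Performing the inner sum over such $\pi$ reduces to partition-lattice identities of the type $\sum_{\pi\succeq\rho}(-1)^{|\pi|-1}(|\pi|-1)!=\mathbf{1}[\,|\rho|=1\,]$ together with constrained and weighted variants, the weight being $\tfrac{N-|Q|}{N}$ for the $H$-terms and the constant $\tfrac1N$ for the $S$-terms. Splitting $\tfrac{N-|Q|}{N}=\tfrac{N-j}{N}+\tfrac{j-|Q|}{N}$ separates the $\tfrac{N-j}{N}$-pieces, which collapse to the first and third families of terms in~\eqref{eq.cluster-pde}, from the $O(1/N)$ corrections, in which $j-|Q|$ gets redistributed as the complement cardinalities $j-1-|W|$ and $j-1-|W|-|R|$ of the second and fourth families; the $S$-terms are handled identically, with $S_{k,\ell}$ joining at most the $\rho$-blocks of $k$ and of $\ell$, producing precisely the last two families and no higher-order ones (since $S_{k,\ell}$ touches only two coordinates). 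Finally, at $t=0$ the marginals tensorize, $f_P(0,\cdot)=f^{\otimes|P|}$, so $g_j(0,\cdot)=f^{\otimes j}\sum_{\pi\vdash[j]}c_\pi$, and the classical identity $\sum_{\pi\vdash[j]}(-1)^{|\pi|-1}(|\pi|-1)!=\mathbf{1}[j=1]$ (its exponential generating function being $\log(1+(e^x-1))=x$) yields $g_1(0,\cdot)=f$ and $g_j(0,\cdot)=0$ for $j\geq2$.

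\textbf{Main obstacle.} The only genuine work is the combinatorial bookkeeping in the middle step: tracking exactly which cluster factors each operator strikes, re-indexing the double sum over $(\pi,\rho)$, and verifying that the weighted partition-lattice sums collapse to exactly the six listed families with the stated coefficients $\tfrac{j-1-|W|}{N}$ and $\tfrac{j-1-|W|-|R|}{N}$ — elementary but laborious, which is presumably why it is deferred. A cleaner-looking alternative is to induct on $j$: isolate the $\pi=\{[j]\}$ term in Proposition~\ref{prop.cluster-representation} to write $g_j=f_j-\sum_{\pi\neq\{[j]\}}\prod_P g_P$, differentiate, and feed in the BBGKY equation for $f_j$ together with the inductively known equations for the lower $g_P$; one could also work with the cumulant generating functional $W[u]=\log\sum_j\tfrac1{j!}\int f_j\,u^{\otimes j}$, though the $j$-dependent coefficient $\tfrac{N-j}{N}$ makes that route less transparent. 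In every version the analytic input is minimal — just enough regularity to differentiate~\eqref{eq.forward-cluster-definition} and apply Leibniz — and the content is the partition combinatorics.
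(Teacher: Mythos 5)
Your proposal follows essentially the same route as the paper's proof in Section~\ref{s.algebraic-proofs}: differentiate~\eqref{eq.forward-cluster-definition}, insert the BBGKY hierarchy, re-expand the marginals via Proposition~\ref{prop.cluster-representation}, and collapse the sum over coarsenings using the partition-lattice identity (the paper's Lemma~\ref{lem.combinings-combinatorics}), with the initial conditions handled by the same identity. The one caveat is that the part you flag as "elementary but laborious" — the explicit evaluation of the weighted coefficient sums (the paper's $a_{k,\ell}^\sigma$ and $b_k^\sigma$, including the cancellation that isolates the $\tfrac{j+1-|Q|}{N}$-type weights) — is exactly where the paper spends its effort, so your writeup is a correct blueprint rather than a complete verification.
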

The proof of this proposition involves expanding out the $g_j$ in terms of $f_i$, using the equations for $f_i$, and then re-expanding the $f_i$ in terms of $g_k$. One must then carefully collect constant factors before identical terms. We defer the unenlightening proof to Section~\ref{s.algebraic-proofs}.

\subsection{Perturbative expansion of the cluster functions}
\label{ss.cluster-perturbative}

We note this subsection primarily consists of formal arguments, motivating the correct equations for $g^i_j$ and $f^i_j$. The actual analytic content of this section isn't realized until we prove that this formal perturbation theory gives good approximations to the true marginal densities in Section~\ref{s.hierarchy}.

We are interested in computing solutions to this hierarchy perturbatively in $N^{-1}$. Thus we now take the perturbative ansatz for $g_j$,
\[g_j = \sum_{i=0}^\infty N^{-i} g_j^i,\]
where the $g_j^i$ are assumed to be $N$-independent. Plugging this into \eqref{eq.cluster-pde} and collecting orders of $N^{-1}$, we find that such $g_j^i$ should be solutions to
\begin{align}
\notag\lefteqn{\partial_t g^i_j - \Delta g^i_j+\sum_{k=1}^j H_k g^0_{\{k\}}  g^{i}_{[j]\cup \{*\} - \{k\}}+\sum_{k=1}^j H_k g^i_{[j]}  g^{0}_{\{*\}}}\quad&
\\&\notag=-\sum_{k=1}^j H_k g^i_{[j]\cup\{*\}}-\sum_{k=1}^j \sum_{W \subseteq [j] -\{k\}} \sum_{m=1}^{i-1}H_k g^m_{W \cup \{k\}}  g^{i-m}_{[j]\cup \{*\} - W -\{k\}}
\\&\notag\quad +j \sum_{k=1}^j H_k g^{i-1}_{[j]\cup\{*\}}+\sum_{k=1}^j \sum_{W \subseteq [j] - \{k\}} (j-1 - |W|) \sum_{m=0}^{i-1} H_k g^m_{W \cup \{k,*\}} g^{i-1-m}_{[j] - \{k\} - W}
 \\&\notag\quad +j\sum_{k=1}^j \sum_{W \subseteq [j] -\{k\}} \sum_{m=0}^{i-1}H_k g^m_{W \cup \{k\}} g^{i-1-m}_{[j]\cup \{*\} - W -\{k\}}
        \\&\notag\quad  +\sum_{k=1}^j \sum_{W \subseteq [j] - \{k\}} \sum_{R \subseteq [j] -\{k\} - W} (j -1 - |W| - |R|) \sum_{m=0}^{i-1} \sum_{n=0}^{i-1-m} H_k g^m_{W \cup \{k\}}  g^n_{R \cup \{*\}} g^{i-1-m-n}_{[j] - R - W - \{k\}}
        \\&\quad - \sum_{k,\ell=1}^j S_{k,\ell} g^{i-1}_j- \sum_{\substack{k,\ell=1\\k\neq \ell}}^j \sum_{W \subseteq [j] - \{k,\ell\}} \sum_{m=0}^{i-1} S_{k,\ell} g^m_{W \cup \{k\}} g^{i-1-m}_{[j] - \{k\} - W}, \label{eq.g-ij-PDE}
\end{align}
where we take the convention that $g^{-1}_j = 0$ for any $j$ and $g^i_0 =0$ for any $i$. We also find that they should have initial conditions
\begin{equation}\label{eq.g-ij-initial-conditions}
g^i_j(0,\cdot)=
\begin{cases}
    f&i=0,j=1,\\
    0&\text{otherwise}.
\end{cases}
\end{equation}

Now that we have a representation of the perturbative expansion for the cluster functions $g_j$, we turn our attention back to the marginals $f_j$. We seek a representation of their perturbative expansion. To that end we write the formal expansions
\[\sum_{i=0}^\infty N^{-i} f^i_j = f_j = \sum_{\pi \vdash [j]} \prod_{P \in \pi} g_P =  \sum_{\pi \vdash [j]} \prod_{P \in \pi} \sum_{i_P=0}^\infty N^{-i_P} g_P^{i_P} = \sum_{i=0}^\infty N^{-i} \sum_{\pi \vdash [j]} \sum_{\substack{(i_P)_{P \in \pi}\\ \sum i_P = i}} \prod_{P \in \pi} g^{i_P}_P.\]
Collecting terms by order, we get
\[f^i_j := \sum_{\pi \vdash [j]} \sum_{\substack{(i_P)_{P \in \pi}\\ \sum i_P = i}} \prod_{P \in \pi} g^{i_P}_P.\]
Further, plugging the perturbative expansion for $f_j = \sum_{i=0}^\infty N^{-i} f^i_j$ into the BBGKY hierarchy~\eqref{eq.hierarchy-equation} and collecting orders, we formally get
\[\partial_t f^i_j - \Delta f^i_j +\sum_{k=1}^j  H_k f^i_{[j]\cup\{*\}} = j \sum_{k=1}^j H_k f^{i-1}_{[j]\cup\{*\}} -\sum_{k,\ell=1}^j S_{k,\ell} f^{i-1}_j.\]
Since the $f^i_j$ are defined in terms of the $g^i_j$, which themselves solve equations, we need to check that under our definition of the $f^i_j$, this equation is in fact solved, as is given by the next proposition.

\begin{proposition}\label{prop.f-ij-equations}
    Let $f^i_j$ be defined by~\eqref{eq.f-ij-def} where the $g^i_j$ solve the hierarchy~\eqref{eq.g-ij-PDE}. Then the $f^i_j$ solve the hierarchy of equations~\eqref{eq.f-ij-eqs-intro}.
\end{proposition}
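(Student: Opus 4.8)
The plan is to verify directly that the $f^i_j$ defined by~\eqref{eq.f-ij-def} satisfy the claimed hierarchy~\eqref{eq.f-ij-eqs-intro} by substituting the cluster-sum representation into both sides and matching terms. First I would apply $\partial_t - \Delta$ to $f^i_j = \sum_{\pi \vdash [j]} \sum_{\sum i_P = i} \prod_{P \in \pi} g^{i_P}_P$. Using the Leibniz rule, $(\partial_t - \Delta)\prod_{P} g^{i_P}_P = \sum_{Q \in \pi} \big((\partial_t - \Delta)g^{i_Q}_Q\big)\prod_{P \ne Q} g^{i_P}_P - \sum_{Q \ne Q'} \big(\nabla_{x^Q} g^{i_Q}_Q \cdot \nabla_{x^{Q'}} g^{i_{Q'}}_{Q'}\big)\prod_{P \ne Q,Q'} g^{i_P}_P$; actually, since distinct blocks $Q, Q'$ involve disjoint sets of variables, the cross Laplacian terms vanish, so only the diagonal term survives. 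For each distinguished block $Q$, I substitute the equation~\eqref{eq.g-ij-PDE} satisfied by $g^{i_Q}_Q$. This produces a large collection of terms, each a product over blocks of $g$-functions with an $H_k$ or $S_{k,\ell}$ operator applied, with various combinatorial prefactors.

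The key step is then a bookkeeping argument: I must show that after summing over $\pi \vdash [j]$ and over the compositions $(i_P)$, all of the terms coming from the first two ``linear'' $H_k$-terms on the left of~\eqref{eq.g-ij-PDE} (the mean-field transport terms $H_k g^0_{\{k\}} g^i_\cdots$ and $H_k g^i_{[j]} g^0_{\{*\}}$) reassemble precisely into $\sum_{k=1}^j H_k f^i_{[j]\cup\{*\}}$ on the left of~\eqref{eq.f-ij-eqs-intro}, while the remaining terms on the right of~\eqref{eq.g-ij-PDE} reassemble into $j\sum_k H_k f^{i-1}_{[j]\cup\{*\}} - \sum_{k,\ell} S_{k,\ell} f^{i-1}_j$. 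The mechanism is that when one expands $f^i_{[j]\cup\{*\}} = \sum_{\sigma \vdash [j]\cup\{*\}} \sum_{\sum i_P = i}\prod_P g^{i_P}_P$ and applies $H_k$, the operator $H_k$ integrates out $x_*$ and differentiates in $x_k$; the block of $\sigma$ containing both $k$ and $*$ becomes $H_k g^{i_Q}_{Q}$ with $* \in Q$, matching the ``$\sum_W \sum_m H_k g^m_{W\cup\{k\}} g^{i-m}_{[j]\cup\{*\}-W-\{k\}}$'' structure (with $W = Q \setminus \{k,*\}$), and the blocks not touching $k$ or $*$ factor out. The subtle point is the prefactors: the terms in~\eqref{eq.g-ij-PDE} carry combinatorial weights like $(j-1-|W|)$ or $(j-1-|W|-|R|)$, and these arise because when one refines a partition of $[j]$ (as needed to pass from the $g$-equation to the $f$-equation), a block $P$ containing $k$ of size $|P|$ contributes a factor counting the $|P|-1$ other elements of that block — and summing $|P|-1$ over the block containing $k$ across all partitions, weighted appropriately, is exactly what produces the $j$ in $j\sum_k H_k f^{i-1}_{[j]\cup\{*\}}$ and the full double sum $\sum_{k,\ell=1}^j S_{k,\ell}$.

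The main obstacle, therefore, is the purely combinatorial identity matching prefactors across the partition refinement. Concretely: the $g$-hierarchy~\eqref{eq.g-ij-PDE} is written in terms of subsets $W \subseteq [j]\setminus\{k\}$ (and $R$), whereas the $f$-hierarchy is written in terms of the single index $k$ (or pair $k,\ell$) ranging over $[j]$ with no subsets; the bridge is the identity, for an exchangeable family, that $\sum_{\pi \vdash [j]}\ (\text{term depending on the block of }\pi\text{ containing }k) = \sum_{W \subseteq [j]\setminus\{k\}} (\text{term for block }W\cup\{k\})\cdot(\text{sum over partitions of }[j]\setminus(W\cup\{k\}))$, together with the analogous statement for the refinement $f \mapsto \sum \prod g$. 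I would organize the proof by: (i) fixing $k$ (and using exchangeability to reduce to $k=1$, then multiplying by $j$ — or rather keeping the sum over $k$); (ii) classifying each term on the expanded left side by which block(s) of the underlying partition of $[j]\cup\{*\}$ the indices $k, *$ (and $\ell$, for $S$-terms) land in; (iii) checking term-by-term that the weight in~\eqref{eq.g-ij-PDE}, once multiplied by the number of ways to complete the partition, equals the weight in the expansion of the right side of~\eqref{eq.f-ij-eqs-intro}. Since the authors themselves flag this as ``laborious, elementary, and unenlightening'' and defer it to Section~\ref{s.algebraic-proofs}, I would present the reduction to the combinatorial identity cleanly and relegate the weight-matching to a lemma, exactly as the structure of~\eqref{eq.g-ij-PDE} (which was itself derived by the same re-expansion technique used for Proposition~\ref{prop.cluster-function-equation}) suggests is the intended route.
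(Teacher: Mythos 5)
Your proposal follows essentially the same route as the paper's proof: apply $\partial_t-\Delta$ via the Leibniz rule (cross-block terms vanishing since distinct blocks depend on disjoint variables), substitute~\eqref{eq.g-ij-PDE} for each distinguished block, and reindex partitions of $[j]$ carrying a distinguished block and subsets $W,R$ as partitions of $[j]\cup\{*\}$ classified by which blocks contain $k$ and $*$, matching the combinatorial prefactors (with $|P|$, $|P|-1-|W|$, etc.\ summing to $j$). The one slight misstatement is the grouping: the terms that assemble into $\sum_{k}H_k f^i_{[j]\cup\{*\}}$ are not only the two transport terms on the left of~\eqref{eq.g-ij-PDE} but also the first two order-$i$ terms on its right-hand side, which together fill out the full sum over $W\subseteq [j]-\{k\}$ and $0\leq m\leq i$ --- exactly the paper's Claim 1 --- but your described mechanism would lead you to this grouping once the computation is carried out.
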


\begin{remark}
    Theorem~\ref{thm.f-ij-existence-intro} is an immediate consequence of this proposition and Proposition~\ref{prop.g-ij-intro}, which will be proved in the next subsection.
\end{remark}

The proof of Proposition~\ref{prop.f-ij-equations} also proceeds by tedious but elementary algebraic manipulation, so has been deferred to Section~\ref{s.algebraic-proofs}.

\subsection{Existence of the $g^i_j$}

\label{ss.basic-properties}

In this section we prove Proposition~\ref{prop.g-ij-intro} by constructing solutions to the hierarchy of equations~\eqref{eq.g-ij-PDE}. We work with the mild formulation of the equation to define our notion of solutions. As we will see in the proof of Proposition~\ref{prop.g-ij-intro}, $g^1_0$ will solve the equation
\[\begin{cases}
\partial_t g^1_0 - \Delta g^1_0 + \nabla \cdot \int K(x,x_*) g^1_0(x_*) g^1_0(x)\,dx_* = 0\\
g^1_0(0,\cdot) = f.
\end{cases}
\]
This makes $g^1_0$ special among the $g^i_j$ in two ways, first it is the only $g^i_j$ whose equation is nonlinear in $g^i_j$ and second it is the only $g^i_j$ with non-trivial initial data. We note that the equation for $g^1_0$ is the McKean-Vlasov equation~\eqref{eq.Mckean-Vlasov-equation}. For the remainder of the paper, we often use $\rho$ in place of $g^1_0$.

We now prove that there actually is a solution to the hierarchy~\eqref{eq.g-ij-PDE}, which is the content of Proposition~\ref{prop.g-ij-intro}.
\begin{proof}[Proof of Proposition~\ref{prop.g-ij-intro}.]
The proof proceeds in two steps.

 \textit{Step 1:} We check that if we take $g_j^i=0$ for all $(i,j)\notin D$, then this does not contradict the equations \eqref{eq.g-ij-PDE}. That is to say, we just need to verify that if $(i,j)\notin D$, then all terms in the right hand side of the equation for $g_j^i$ involve $g_\ell^k$ for some $(k,\ell)\notin D$.  This is easy to check for all terms which do not involve products of the $g^k_\ell$. There are 5 terms which do involve products. We check the first term (in the order they appear in~\eqref{eq.g-ij-PDE}), which already sharply generates the constraint $j \geq i+2,$ the others follow similarly. Consider
\[H_k g^m_{W \cup \{k\}}  g^{i-m}_{[j]\cup \{*\} - W -\{k\}},\]
for $k \in [j], W \subseteq [j] - \{k\}.$ Then the only way $(m,|W|+1)$ and $(i-m,j-|W|)$ are both in $D$ is if
    \[|W| +1 \leq m+1\ \text{and}\ j-|W| \leq i-m+1.\]
    Simplifying and combining these constraints we find that $j\leq i+1,$ which contradicts the assumption that $(i,j)\notin D.$ The analysis of all other terms follows directly analogously.\footnote{In particular the first, fourth, and fifth terms sharply give the constraint $j \geq i+2$, while the second and third terms are not sharp.}

    \textit{Step 2:} Using Step 1, we will now make the \textit{a priori} assumption that $g_j^i=0$ for all $(i,j)\notin D$. We will inductively show unique existence  for function $g_j^i$ with $(i,j)\in T$ using the ordering defined above.
    
    First, in the base case $(i,j) = (0,1)$, the equation for $g_1^0$ reduces to the McKean-Vlasov equation~\eqref{eq.Mckean-Vlasov-equation}, for which unique existence in $C^0([0,\infty), L^1(\Omega))$ follows from a Picard iteration argument using the mild formulation.
    
     Now, assuming that $g^k_\ell$ have been shown to uniquely exist for $(k,\ell)<(i,j)\in D$, we consider the equation for $g_j^i$. We note that all the terms on the right hand side of the equation only involve terms which are zero or satisfy $(k, \ell)<(i,j)$, while the terms on the left hand side are linear in $g_j^i$. A Picard iteration using the mild formulation then gives unique existence of a $C^0_{loc}([0,\infty), L^1(\Omega))$ solution to~\eqref{eq.g-ij-PDE}. This completes the induction.
\end{proof}

\section{Hierarchy bounds}

\label{s.hierarchy}

In this section we prove Theorem~\ref{thm.main-result-intro} in the restricted setting of $\Omega = \T^d$ and $f+ f^{-1} \in L^\infty(\T^d).$ In this setting, the necessary integrations by parts appearing in the proofs of Propositions~\ref{prop.l2-hierarchy} and~\ref{prop.g-ij-bounds} are easily justified. In particular, by restricting to $\T^d$, we do not have to consider issues of decay at $\infty$ in the integration by parts. Additionally, by basic parabolic theory applied to their respective equations, we have that 
\[\rho+\rho^{-1}\in L^\infty_{loc}([0,\infty), L^\infty(\T^d)) \text{ and } f_j,g^i_j \in L^\infty_{loc}([0,\infty), L^\infty(\T^{dj})),\]
which makes rigorous justification of the formal computations a standard approximation argument. We note the presence of $\rho^{-1}$ makes it challenging to justify these computations without such assumptions. We emphasize that all bounds attained in this section are uniform in $\|f+f^{-1}\|_{L^\infty(\T^d)}$ as well as the sidelength of the torus. This will be needed in Section~\ref{s.extend} to extend the results to remove the restrictions of this section and complete the proof of Theorem~\ref{thm.main-result-intro}.  

We now introduce some additional notation.
\begin{definition}
    Letting $f^i_j$ be defined by~\eqref{eq.f-ij-def}, we let
         \begin{gather}
    \label{eq.phi-ij-def}
     \phi^i_j := \sum_{k=0}^i N^{-k} f^k_j,\\\notag
     R^i_j := \frac{1}{N^{i+1}} \sum_{k=1}^j e_k\otimes\sum_{\ell=1}^j \int K(x_k,x_*) f^i_{[j]\cup\{*\}}dx_*-K(x_k,x_\ell) f^i_j.
     \end{gather}
\end{definition}

\begin{remark}
    The tensor product notation used in the definition of $R^i_j$ is given such that 
    \[\nabla \cdot R^i_j = \frac{1}{N^{i+1}} \sum_{k=1}^j \nabla_{x_k}\cdot\sum_{\ell=1}^j \int K(x_k,x_*) f^i_{[j]\cup\{*\}}dx_*-K(x_k,x_\ell) f^i_j,\]
    where $\nabla \cdot$ denotes the divergence on $\T^{jd}.$
\end{remark}

One can readily check using the equations the $f^i_j$ solve that $\phi^i_j$ solves the following equation.
\begin{equation}
\label{eq.phi-ij-r-ij}
    \partial_t \phi_j^i - \Delta \phi_j^i +  \frac{N-j}{N} \sum_{k=1}^j \nabla_{x_k}\cdot\int K(x_k, x_{*}) \phi_{j+1}^i(x^{[j]\cup\{*\}})\,dx_* + \frac{1}{N}  \sum_{k,\ell=1}^j \nabla_{x_k} \cdot(K(x_k, x_\ell) \phi_j^i) = \nabla\cdot R_j^i.
\end{equation}

We now show the essential $L^2$ energy-type estimate for difference $\phi^i_j - f_j$. We note that at $t=0$, $\phi^i_j = f_j$, so this estimate allows us to control the size of $\phi^i_j - f_j$ for $t>0$ by a Gr\"onwall-type argument. We also give a somewhat brutal bound on the growth of $f_j$ that doesn't depend on $\phi^i_j$. We will use this brutal bound to ``close'' the hierarchy.

\begin{proposition}
    \label{prop.l2-hierarchy}
    Suppose $f +f^{-1}\in L^\infty(\T^d)$ and $K \in L^\infty_\delta(\T^{2d}).$ Then letting
    \begin{equation}
    \label{eq.gamma-ij-def}
        \gamma^i_j := \phi^i_j-f_j,
    \end{equation}
    we have that
    \begin{align}
        \frac{d}{dt} \int \Big|\frac{\gamma^i_j}{\rho^{\otimes j}}\Big|^2 \rho^{\otimes j}\,dx &\leq 2 j \|K\|_{L^\infty_\delta}^2 \Bigg(\int \Big|\frac{\gamma^i_{j+1}}{\rho^{\otimes (j+1)}}\Big|^2 \rho^{\otimes (j+1)}\,dx_*dx - \int \Big|\frac{\gamma^i_j}{\rho^{\otimes j}}\Big|^2 \rho^{\otimes j}\,dx \Bigg)
        \notag\\&\qquad + 4\frac{j^3}{N^2}\|K\|_{L^\infty_\delta}^2 \int \Big|\frac{\gamma^i_j}{\rho^{\otimes j}}\Big|^2 \rho^{\otimes j}\,dx + 2 \int \Big|\frac{R^i_j}{\rho^{\otimes j}}\Big|^2 \rho^{\otimes j}\,dx.
        \label{eq.l2-hierarchy}
    \end{align}
    We also have that
    \begin{equation}
        \label{eq.l2-apriori}
        \frac{d}{dt} \int \Big|\frac{f_j}{\rho^{\otimes j}}\Big|^2 \rho^{\otimes j}\,dx \leq 12 j \|K\|_{L^\infty_\delta}^2 \int \Big|\frac{f_j}{\rho^{\otimes j}}\Big|^2 \rho^{\otimes j}\,dx.
    \end{equation}
\end{proposition}

\begin{remark}
    Note the interesting---and essential---property of this estimate that all constants are independent of $\rho$.
\end{remark}

We will use the following marginalization property of the $g^i_j$ functions.
\begin{lemma}
\label{lem.marginalization}
    For $g^i_j$ as given in Proposition~\ref{prop.g-ij-intro}, for any $i,j$ and any $1 \leq \ell \leq j,$
    \[\int g^i_j\,dx_\ell = \begin{cases}
        1 & (i,j)=(0,1),\\
        0 & \text{otherwise.}
    \end{cases}\]
    Thus
    \[\int f^i_{j+1}\,dx_{j+1} = f^i_j \text{ and } \int \phi^i_{j+1}\,dx_{j+1} = \phi^i_j.\]
\end{lemma}
As the proof of the above is uninteresting we defer it to Section~\ref{ss.marginalization}.

\begin{proof}[Proof of Proposition~\ref{prop.l2-hierarchy}]
    For notational simplicity, let us fix $i \in \N$ and drop the $i$ dependence, writing $\gamma_j$ for $\gamma_j^i$. We note that $\gamma_j$ solves the equation 
    \[\partial_t \gamma_j -\Delta \gamma_j +\frac{N-j}{N} \sum_{k=1}^j \nabla_{x_k} \cdot \int K(x_k,x_*)\gamma_{j+1}(x^{[j] \cup \{*\}})\,dx_* +\frac{1}{N} \sum_{k,\ell=1}^j \nabla_{x_k} \cdot (K(x_k,x_\ell)\gamma_j) = \nabla \cdot R_j,\]
    where $R_j = R^i_j.$ We also have that
    \[\partial_t \rho^{\otimes j} - \Delta \rho^{\otimes j} + \sum_{k=1}^j \nabla_{x_k} \cdot \int K(x_k,x_*) \rho(x_*)\,dx_*\rho^{\otimes j} =0.\]
    We then compute
    \begin{align*}
        \frac{d}{dt} \int \frac{\gamma_j^2}{\rho^{\otimes j}}\,dx &= \int 2\frac{\gamma_j}{\rho^{\otimes j}} \partial_t \gamma_j - \frac{\gamma_j^2}{(\rho^{\otimes j})^2} \partial_t \rho^{\otimes j}\,dx
        \\&= \int - 2 \nabla \frac{\gamma_j}{\rho^{\otimes j}} \cdot \nabla \gamma_j + 2 \frac{N-j}{N} \nabla \frac{\gamma_j}{\rho^{\otimes j}} \cdot \sum_{k=1}^j e_k\otimes \int K(x_k,x_*) \gamma_{j+1}(x^{[j] \cup \{*\}})\,dx_*
        \\&\qquad + \frac{2}{N} \nabla \frac{\gamma_j}{\rho^{\otimes j}} \cdot \sum_{k,\ell=1}^j e_k\otimes K(x_k,x_\ell)\gamma_j - 2 \nabla \frac{\gamma_j}{\rho^{\otimes j}} \cdot R_j 
        \\&\qquad + 2 \frac{\gamma_j}{\rho^{\otimes j}} \nabla \frac{\gamma_j}{\rho^{\otimes j}} \cdot \nabla \rho^{\otimes j} - 2 \frac{\gamma_j}{\rho^{\otimes j}} \nabla \frac{\gamma_j}{\rho^{\otimes j}} \cdot \sum_{k=1}^j e_k\otimes \int K(x_k,x_*) \rho(x_*)\,dx_* \rho^{\otimes j} \,dx.
    \end{align*}
    We note that
    \[\rho^{\otimes j}\nabla \frac{\gamma_j}{\rho^{\otimes j}} = \nabla \gamma_j - \frac{\gamma_j}{\rho^{\otimes j}} \nabla \rho^{\otimes j}.\]
    Thus
    \[\int - 2 \nabla \frac{\gamma_j}{\rho^{\otimes j}} \cdot \nabla \gamma_j + 2 \frac{\gamma_j}{\rho^{\otimes j}} \nabla \frac{\gamma_j}{\rho^{\otimes j}} \cdot \nabla \rho^{\otimes j}\,dx = -2 \int \Big| \nabla \frac{\gamma_j}{\rho^{\otimes j}} \Big|^2 \rho^{\otimes j}\,dx.\]
    We then group terms,
    \begin{align*}
        \frac{d}{dt} \int \frac{\gamma_j^2}{\rho^{\otimes j}}\,dx &= -2\int \Big| \nabla \frac{\gamma_j}{\rho^{\otimes j}} \Big|^2 \rho^{\otimes j}\,dx 
        \\&\qquad+ 2 \frac{N-j}{N}\int  \nabla\frac{\gamma_j}{\rho^{\otimes j}} \cdot \sum_{k=1}^j e_k\otimes \int K(x_k,x_*)\Big( \frac{\gamma_{j+1}}{\rho^{\otimes (j+1)}} - \frac{\gamma_j}{\rho^{\otimes j}} \Big) \rho(x_*)\,dx_*\ \rho^{\otimes j}\,dx
        \\&\qquad - 2\frac{j}{N} \int \nabla \frac{\gamma_j}{\rho^{\otimes j}} \cdot \sum_{k=1}^j e_k\otimes \int K(x_k,x_*) \rho(x_*)\,dx_* \frac{\gamma_j}{\rho^{\otimes j}}\rho^{\otimes j}\,dx
        \\&\qquad+ \frac{2}{N}\int  \nabla \frac{\gamma_j}{\rho^{\otimes j}} \cdot \sum_{k,\ell=1}^j e_k\otimes K(x_k,x_\ell) \frac{\gamma_j}{\rho^{\otimes j}} \rho^{\otimes j} \,dx  - 2\int \nabla \frac{\gamma_j}{\rho^{\otimes j}} \cdot \frac{R_j}{\rho^{\otimes j}} \rho^{\otimes j} \,dx.
    \end{align*}
    Thus applying Young's inequality, we see that
    \begin{align*}
    \frac{d}{dt} \int \frac{\gamma_j^2}{\rho^{\otimes j}}\,dx &\leq 2j\int \Big|\int K(x_1,x_*) \Big( \frac{\gamma_{j+1}}{\rho^{\otimes (j+1)}} - \frac{\gamma_j}{\rho^{\otimes j}} \Big) \rho(x_*)\,dx_*\Big|^2 \rho^{\otimes j}\,dx
    \\&\qquad+ 4\frac{j^3}{N^2}\|K\|_{L^\infty_\delta}^2 \int \frac{\gamma_j^2}{\rho^{\otimes j}}\,dx + 2 \int \Big|\frac{R_j}{\rho^{\otimes j}}\Big|^2 \rho^{\otimes j}\,dx.
    \end{align*}
    We then note that by H\"older's inequality,
    \begin{align*}
        &\Big|\int K(x_1,x_*) \Big( \frac{\gamma_{j+1}}{\rho^{\otimes (j+1)}} - \frac{\gamma_j}{\rho^{\otimes j}} \Big) \rho(x_*)\,dx_*\Big|^2
        \\&\qquad\leq \int K(x_1,x_*)^2 \rho(x_*)\,dx_* \int \Big( \frac{\gamma_{j+1}}{\rho^{\otimes (j+1)}} - \frac{\gamma_j}{\rho^{\otimes j}} \Big)^2 \rho(x_*)\,dx_*
        \\&\qquad\leq \|K\|_{L^\infty_\delta}^2 \Bigg(\int \Big|\frac{\gamma_{j+1}}{\rho^{\otimes (j+1)}}\Big|^2 \rho(x_*)\,dx_*  - 2 \frac{\gamma_j}{(\rho^{\otimes j})^2} \int \gamma_{j+1}\,dx_*+ \Big|\frac{\gamma_j}{\rho^{\otimes j}}\Big|^2\int \rho(x_*)\,dx_*\Bigg)
        \\&\qquad= \|K\|_{L^\infty_\delta}^2 \Bigg(\int \Big|\frac{\gamma_{j+1}}{\rho^{\otimes (j+1)}}\Big|^2 \rho(x_*)\,dx_* - \Big|\frac{\gamma_j}{\rho^{\otimes j}}\Big|^2 \Bigg),
    \end{align*}
    where we use Lemma~\ref{lem.marginalization} for the last line.
    Combining this with the previous inequality, we get~\eqref{eq.l2-hierarchy}. Turning our attention~\eqref{eq.l2-apriori}, we note that repeating that above computations with $f_j$ in place of $\gamma_j$, we get that 
    \[
    \frac{d}{dt} \int \frac{f_j^2}{\rho^{\otimes j}}\,dx \leq 2j\int \Big|\int K(x_1,x_*) \Big( \frac{f_{j+1}}{\rho^{\otimes (j+1)}} - \frac{f_j}{\rho^{\otimes j}} \Big) \rho(x_*)\,dx_*\Big|^2 \rho^{\otimes j}\,dx
    + 4\frac{j^3}{N^2}\|K\|_{L^\infty_\delta}^2 \int \frac{f_j^2}{\rho^{\otimes j}}\,dx.\]
    Then we note that 
    \[ \Big|\int K(x_1,x_*) \Big( \frac{f_{j+1}}{\rho^{\otimes (j+1)}} - \frac{f_j}{\rho^{\otimes j}} \Big) \rho(x_*)\,dx_*\Big| \leq \|K\|_{L^\infty_\delta} \Big(\frac{f_j}{\rho^{\otimes j}} 
 + \frac{1}{\rho^{\otimes j}} \int f_{j+1}\,dx_*\Big) = 2 \|K\|_{L^\infty_\delta} \frac{f_j}{\rho^{\otimes j}}.\]
    Thus
    \[
    \frac{d}{dt} \int \frac{f_j^2}{\rho^{\otimes j}}\,dx \leq 8 j \|K\|_{L^\infty_\delta}^2 \int \Big|\frac{f_j}{\rho^{\otimes j}}\Big|^2 \rho^{\otimes j}\,dx +4\frac{j^3}{N^2}\|K\|_{L^\infty_\delta}^2 \int \frac{f_j^2}{\rho^{\otimes j}}\,dx \leq 12 j \|K\|_{L^\infty_\delta}^2 \int \Big|\frac{f_j}{\rho^{\otimes j}}\Big|^2 \rho^{\otimes j}\,dx,\]
    giving~\eqref{eq.l2-apriori}.
\end{proof}

With the energy estimate~\eqref{eq.l2-hierarchy} in hand, we now need to understand how to bound hierarchies of differential inequalities of the above sort. This is the focus of Subsection~\ref{ss.ODE}. Before that though, we need to note bounds on the terms involved. The bound given by Proposition~\ref{prop.r-ij-bound} is essential for estimating the contribution of the remainder terms of~\eqref{eq.l2-hierarchy}; the bounds of Proposition~\ref{prop.f-ij-bound} will turn out to be useful as well for somewhat subtler reasons. We defer the combinatorial proofs of these bounds to Subsection~\ref{ss.proofs-of-bounds} so as not to distract from the heart of the argument. 

\begin{proposition}
    \label{prop.f-ij-bound}
    Suppose $f +f^{-1}\in L^\infty(\T^d)$ and $K \in L^\infty_\delta(\T^{2d}).$ Then there exists $C(\|K\|_{L^\infty_\delta},i)< \infty$ such that 
        \[\int \bigg|\frac{f^i_j}{\rho^{\otimes j}}\bigg|^2 \rho^{\otimes j}\,dx\leq C e^{Ct} j^{2i},\]
    and so
    \[\int \bigg|\frac{\phi^i_j}{\rho^{\otimes j}}\bigg|^2 \rho^{\otimes j}\,dx\leq C e^{Ct}.\]
\end{proposition}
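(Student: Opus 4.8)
The plan is to bound $\int |f^i_j/\rho^{\otimes j}|^2 \rho^{\otimes j}\,dx$ by expanding $f^i_j$ via the representation~\eqref{eq.f-ij-def} in terms of the cluster terms $g^{i_P}_P$, estimating each product term separately, and crucially exploiting the marginalization property from Proposition~\ref{prop.marginalization} to obtain the sharp power of $j$. First I would recall that by Proposition~\ref{prop.g-ij-intro}, $g^k_\ell = 0$ unless $(k,\ell) \in T$, i.e.\ $1 \leq \ell \leq k+1$; in particular in any partition term $\prod_{P \in \pi} g^{i_P}_P$ contributing to $f^i_j$ we need $|P| \leq i_P + 1$ for every block $P$, and since $\sum_P i_P = i$, summing gives $j = \sum_P |P| \leq i + |\pi|$, so $|\pi| \geq j - i$. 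Thus every surviving partition has at least $j-i$ blocks, and since a partition of $[j]$ into $j-i$ or more blocks has at least $j - 2i$ singletons (a block of size $\geq 2$ "uses up" an extra element, and there are at most $i$ such extra elements available), each surviving term is a product of at least $j - 2i$ singleton factors $g^0_1 = \rho$ times a bounded number (depending only on $i$) of other factors $g^{i_P}_P$ with $|P| \leq i+1$. The point of the $\rho$ factors is that they cancel against the corresponding factors of $\rho^{\otimes j}$ in the denominator.

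The main estimate then proceeds as follows. For a fixed surviving partition $\pi$ and fixed $(i_P)$, write $\pi = \pi_{\mathrm{sing}} \sqcup \pi_{\mathrm{big}}$ where $\pi_{\mathrm{sing}}$ are the singleton blocks carrying $i_P = 0$ (hence $g^{i_P}_P = \rho$) and $\pi_{\mathrm{big}}$ collects all remaining blocks, of which there are at most $2i$ (since $\sum_{P \in \pi_{\mathrm{big}}} |P| \leq 2i$). Then
\[
\frac{\prod_{P \in \pi} g^{i_P}_P}{\rho^{\otimes j}} = \prod_{P \in \pi_{\mathrm{big}}} \frac{g^{i_P}_P(x^P)}{\rho^{\otimes |P|}(x^P)},
\]
and since $\rho^{\otimes j}\,dx = \prod_P \rho^{\otimes |P|}(x^P)\,dx^P$ is a product measure, Fubini gives
\[
\int \bigg| \frac{\prod_{P \in \pi} g^{i_P}_P}{\rho^{\otimes j}}\bigg|^2 \rho^{\otimes j}\,dx = \prod_{P \in \pi_{\mathrm{big}}} \int \bigg| \frac{g^{i_P}_P}{\rho^{\otimes |P|}}\bigg|^2 \rho^{\otimes |P|}\,dx^P,
\]
which is a product of at most $2i$ factors, each depending only on a function $g^k_\ell$ with $k \leq i$ and $\ell \leq i+1$ — a \emph{finite} list of functions, all lying in $L^\infty_{loc}([0,\infty), L^\infty(\T^{\ell d}))$ with $\rho^{\pm 1} \in L^\infty_{loc}$, so each such factor is bounded by some $C_0 e^{C_0 t}$ with $C_0 = C_0(\|K\|_{L^\infty}, i)$. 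Hence each surviving partition term contributes at most $(C_0 e^{C_0 t})^{2i} \leq C e^{Ct}$, uniformly in $j$. The total bound is then obtained by counting: the number of surviving partitions of $[j]$ (those with $\geq j - i$ blocks, equivalently $\leq 2i$ non-singleton slots) together with the number of ways to distribute $(i_P)$ is at most a polynomial in $j$ of degree $\leq 2i$ — choosing which $\leq 2i$ elements of $[j]$ lie in non-singleton blocks and partitioning them, times $O_i(1)$ ways to assign the $i_P$ — giving at most $C(i) j^{2i}$ terms. Combining, $\int |f^i_j/\rho^{\otimes j}|^2 \rho^{\otimes j}\,dx \leq C(i) j^{2i} \cdot C e^{Ct}$, and after applying $(\sum_k a_k)^2 \leq (\#\text{terms}) \sum_k a_k^2$ with Cauchy–Schwarz to handle the square of a sum this yields the claimed bound $C e^{Ct} j^{2i}$ (absorbing another factor of the term-count, still polynomial of degree $\leq 2i$, into the $j^{2i}$ — here one should be slightly careful, applying Cauchy–Schwarz within blocks and using that the term count is $O(j^{2i})$; a cleaner route is to bound $\|f^i_j/\rho^{\otimes j}\|_{L^2(\rho^{\otimes j})} \leq \sum_{\pi,(i_P)} \|\cdot\|_{L^2(\rho^{\otimes j})}$ by the triangle inequality, getting $C(i) j^{i}$ terms by a more careful count tracking that at most $i$ "extra" elements exist, hence at most $O(j^i)$ relevant configurations, each bounded by $\sqrt{C e^{Ct}}$, squaring to give $C e^{Ct} j^{2i}$). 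For the second bound, $\phi^i_j = \sum_{k=0}^i N^{-k} f^k_j$, so by the triangle inequality $\|\phi^i_j/\rho^{\otimes j}\|_{L^2(\rho^{\otimes j})} \leq \sum_{k=0}^i N^{-k} \sqrt{C e^{Ct}} j^k \leq \sqrt{C e^{Ct}} \sum_{k=0}^i (j/N)^k \leq \sqrt{C e^{Ct}}$ using the constraint $j \leq N$ (indeed $j \lesssim N^{2/3}$), after enlarging $C$.

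The main obstacle, and the step requiring genuine care rather than bookkeeping, is getting the sharp power $j^{2i}$ (equivalently $j^i$ before squaring) rather than a cruder power like $j^{2i+1}$ or worse. This hinges on the dual observation that (a) in a surviving partition essentially all blocks are singletons carrying a factor of $\rho$ that cancels the denominator, and (b) the combinatorial count of "which few elements are special" is controlled by the fixed budget $i$, not by $j$. Both (a) and (b) follow from the vanishing property $g^k_\ell = 0$ for $\ell > k+1$ (Proposition~\ref{prop.g-ij-intro}, item~\ref{item.g-ij-vanish}), so the proof is really an exercise in carefully tracking that this constraint limits the number of "large" blocks to $\leq 2i$ and the number of elements in them to $\leq 2i$. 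The marginalization property of Proposition~\ref{prop.marginalization} is what guarantees the singleton $\rho$-factors integrate cleanly against the product measure (it is already implicit since $g^0_1 = \rho$ is a probability density, but it is the conceptual reason the cancellation is exact), and it is also what makes the deferred proof in Subsection~\ref{ss.proofs-of-bounds} "combinatorial" in nature as the paper advertises.
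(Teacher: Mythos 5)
Your setup is the same as the paper's: isolate the at most $2i$ elements lying in non-singleton blocks (this is exactly Lemma~\ref{lem.f-ij-rho-rep}), cancel the singleton $\rho$-factors against the denominator, and bound each surviving product by Fubini and a per-factor estimate. But the final summation step has a genuine gap, and it is precisely the step the paper flags as the reason for the sharp rate. The number of surviving configurations is $\Theta(j^{2i})$, not $O(j^{i})$: to specify a configuration you must choose which elements of $[j]$ lie in non-singleton blocks, and that set can have up to $2i$ elements (e.g.\ $i$ disjoint pairs each carrying $g^1_2$), so there are $\binom{j}{2i}\sim j^{2i}$ choices. Consequently your triangle-inequality route gives $\|f^i_j/\rho^{\otimes j}\|_{L^2(\rho^{\otimes j})}\lesssim j^{2i}e^{Ct}$ and hence $j^{4i}$ after squaring, and your Cauchy--Schwarz route $(\sum_k a_k)^2\le(\#\text{terms})\sum_k a_k^2$ gives the same $j^{4i}$; you cannot "absorb another factor of the term count" without doubling the exponent. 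The missing idea is $L^2$ orthogonality: expand the square as a double sum over pairs of supports $(P,R)$ and observe that the cross term vanishes whenever $P\ne R$, because if some coordinate $x_p$ lies in a block $S$ of one partition with $i_S\ge 1$ but not in the other support, then integrating in $x_p$ first gives $\int g^{i_S}_S\,dx_p=0$ by Proposition~\ref{prop.marginalization}. Only the $O_i(j^{2i})$ diagonal terms survive, each bounded by $Ce^{Ct}$, which is how the paper gets $Ce^{Ct}j^{2i}$. You cite Proposition~\ref{prop.marginalization}, but only for the trivial fact that the singleton $\rho$'s integrate to one; its real role here is this orthogonality.

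A secondary issue: you justify the per-factor bound $\int|g^{i_Q}_Q/\rho^{\otimes|Q|}|^2\rho^{\otimes|Q|}\,dx^Q\le C_0e^{C_0t}$ by appealing to the qualitative membership $g^k_\ell\in L^\infty_{loc}$ and $\rho^{\pm1}\in L^\infty_{loc}$. That would make $C_0$ depend on $m$ and $\|f\|_{L^\infty}$, which the proposition explicitly excludes ($C$ depends only on $\|K\|_{L^\infty}$ and $i$). The correct input is the quantitative estimate of Proposition~\ref{prop.g-ij-bounds}, itself proved by an inductive Gr\"onwall argument up the hierarchy~\eqref{eq.g-ij-PDE}. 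With these two repairs --- orthogonality for the off-diagonal terms and Proposition~\ref{prop.g-ij-bounds} for the diagonal ones --- your argument becomes the paper's proof.
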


\begin{proposition}
    \label{prop.r-ij-bound}
    Suppose $f +f^{-1} \in L^\infty(\T^d)$ and $K \in L^\infty_\delta(\T^{2d})$. Then there exists $C(\|K\|_{L^\infty_\delta},i)< \infty$ such that 
        \[\int \bigg|\frac{R^i_j}{\rho^{\otimes j}}\bigg|^2\rho^{\otimes j}\,dx\leq C e^{Ct} \Big(\frac{j}{N}\Big)^{2(i+1)}.\]
\end{proposition}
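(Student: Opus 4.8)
The plan is to expand $R^i_j$ through the cluster expansion of $f^i_{j+1}$, cancel the ``mean-field'' part, and bound the remainder by an $L^2$ near-orthogonality argument refining the one behind Proposition~\ref{prop.f-ij-bound}. Throughout write $\|h\|_m^2 := \int |h/\rho^{\otimes m}|^2 \rho^{\otimes m}\,dx$, put $\bar K(x) := \int K(x,x_*)\rho(x_*)\,dx_*$ (so $\|\bar K\|_{L^\infty}\leq\|K\|_{L^\infty}$), and set
\[\Phi_k := \sum_{\ell=1}^j \int \big(K(x_k,x_*) - K(x_k,x_\ell)\big)\, f^i_{[j]\cup\{*\}}(x,x_*)\,dx_*.\]
Using the marginalization identity $\int f^i_{[j]\cup\{*\}}\,dx_*=f^i_j$ of Proposition~\ref{prop.marginalization}, the definition of $R^i_j$ rewrites as $R^i_j = \tfrac{1}{N^{i+1}}\sum_k e_k\otimes\Phi_k$, whence $\|R^i_j\|_j^2 = N^{-2(i+1)}\sum_{k=1}^j \|\Phi_k\|_j^2$. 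It therefore suffices to show $\|\Phi_k\|_j^2 \lesssim e^{Ct} j^{2i+1}$ for each $k$, with $C=C(\|K\|_{L^\infty},i)$; summing over the $j$ indices $k$ then yields the asserted $(j/N)^{2(i+1)}$.

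To estimate $\Phi_k$, substitute $f^i_{[j]\cup\{*\}} = \sum_{\pi\vdash[j]\cup\{*\}}\sum_{\sum i_P = i}\prod_{P}g^{i_P}_P$ and integrate in $x_*$: only the block $Q_*\ni *$ is affected, and by Proposition~\ref{prop.marginalization} the $K(x_k,x_\ell)$-term contributes only when $Q_*=\{*\}$ has exponent $0$ (so $g^0_{\{*\}}=\rho$), while $\int K(x_k,x_*)g^{i_{Q_*}}_{Q_*}\,dx_*$ equals $\bar K(x_k)$ if $Q_*=\{*\}$ has exponent $0$, the bounded function $\widetilde K^{i_0}(x_k):=\int K(x_k,x_*)g^{i_0}_{\{*\}}(x_*)\,dx_*$ if $Q_*=\{*\}$ has exponent $i_0\geq1$, and otherwise a bounded function of $x_k$ and of the boundedly many variables indexed by $Q_*\setminus\{*\}$. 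Re-summing, and splitting the leading ``$Q_*=\{*\}$, exponent $0$'' piece according to whether the block of $f^i_j$ containing $\ell$ is an exponent-$0$ singleton or not, yields
\[\Phi_k = \underbrace{\sum_{\ell=1}^j\big(\bar K(x_k)-K(x_k,x_\ell)\big)\rho(x_\ell)\,f^i_{j-1}(x^{[j]\setminus\{\ell\}})}_{=:F}\;+\;E^{(\ell)}\;+\;E^{(*,0)}\;+\;E^{(*,S)},\]
where $E^{(\ell)}=\sum_\ell(\bar K(x_k)-K(x_k,x_\ell))\big(f^i_j-\rho(x_\ell)f^i_{j-1}(x^{[j]\setminus\{\ell\}})\big)$, $E^{(*,0)}=j\sum_{i_0\geq1}\widetilde K^{i_0}(x_k)\,f^{i-i_0}_j$, and $E^{(*,S)}=j\sum_{\emptyset\neq S\subseteq[j]}\sum_{i_0\geq|S|}\big[\int K(x_k,x_*)g^{i_0}_{S\cup\{*\}}(x^S,x_*)\,dx_*\big]f^{i-i_0}_{j-|S|}(x^{[j]\setminus S})$. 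The role of pairing the factor $j$ against the $x_*$-integral is exactly that the uncorrelated contributions cancel, so that $F$ is a sum of summands that are \emph{mean-zero in} $x_\ell$, since $\int(\bar K(x_k)-K(x_k,x_\ell))\rho(x_\ell)\,dx_\ell=0$.

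Each piece is then controlled in $\|\cdot\|_j$ using the local-in-time $L^\infty$ bounds on $\rho^{\pm1}$ and on the $g^{i_P}_P$, the estimate of Proposition~\ref{prop.f-ij-bound}, and the orthogonality coming from marginalization: two cluster terms whose sets of indices lying in a vanishing-marginal block differ have zero $\rho^{\otimes j}$-weighted inner product, since any index that is a bare $\rho$-singleton in one term and in a vanishing-marginal block in the other integrates out to $0$. As membership in $T$ forces each cluster term entering $f^i_m$ to have at most $2i$ such ``non-trivial'' indices --- and strictly fewer once an exponent has been spent on $\widetilde K^{i_0}$ or on a contracted block $g^{i_0}_{S\cup\{*\}}$ --- the number of mutually non-orthogonal pairs of terms is $\lesssim j^{2i}$ for $F$ and $E^{(\ell)}$ and $\lesssim j^{2i-1}$ for $E^{(*,0)},E^{(*,S)}$; combined with the explicit factor $j$ in front of $E^{(*,0)},E^{(*,S)}$ and the extra factor $j$ from the diagonal ($\ell=\ell'$) part of $\|F\|_j^2$, every piece is $\lesssim e^{Ct}\|K\|_{L^\infty}^2 j^{2i+1}$. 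For the off-diagonal ($\ell\neq\ell'$) part of $\|F\|_j^2$ one integrates $x_\ell$ and then $x_{\ell'}$, using the mean-zero property each time; this forces \emph{both} $x_\ell$ and $x_{\ell'}$ into vanishing-marginal blocks of the two surviving factors of $f^i_{j-1}$, so that part is only of order $j^{2i}$. The triangle inequality over the four pieces gives $\|\Phi_k\|_j^2\lesssim e^{Ct}j^{2i+1}$, as needed.

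The main obstacle is precisely this bookkeeping. The cluster expansion has Bell-number-many terms, and one must organize the sums over partitions, exponents, the auxiliary set $S$, and the index $\ell$ so that each costs only a controlled power of $j$: a term-by-term triangle inequality overshoots by a factor $j^{2i+1}$, which is exactly the difference between the ``naive'' rate $j^{4i+3}$ and the optimal $j^{2i+2}$. Recovering it combines the marginalization orthogonality (as in the proof of Proposition~\ref{prop.f-ij-bound}, now also keeping track of the block contracted against $x_*$) with the mean-zero cancellation produced by balancing the $j$ prefactor against $\sum_\ell$.
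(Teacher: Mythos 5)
Your proposal is correct and follows essentially the same route as the paper's proof: the same splitting of $f^i_{[j]\cup\{*\}}$ according to the block of the cluster expansion containing $*$, the same two cancellation mechanisms (the mean-zero of $\int(K*\rho(x_k)-K(x_k,x_\ell))\rho(x_\ell)\,dx_\ell$ and the vanishing marginals of the $g^{i_Q}_Q$ forcing near-diagonality of the index sets $P,R$), and the same counting yielding $j^{2i+1}$ per component and hence $(j/N)^{2(i+1)}$ after summing over $k$; the paper merely keeps the main term $\sum_\ell(K*\rho(x_1)-K(x_1,x_\ell))f^i_j$ in one piece rather than splitting off your $F$ and $E^{(\ell)}$, encoding both cancellations in a single four-indicator condition~\eqref{eq.remainder-tensorization-cancellation}. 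One caution: do not use pointwise $L^\infty$ bounds on $\rho^{\pm1}$ or on the $g^{i_P}_P$ quantitatively, since those would make $C$ depend on $m$ and $\|f\|_{L^\infty}$ contrary to the statement; every estimate must instead go through Jensen/Cauchy--Schwarz against the weighted $L^2$ bounds of Proposition~\ref{prop.g-ij-bounds}, as the paper does (this in particular is how one should treat your ``bounded functions'' $\widetilde K^{i_0}$ and $\int K(x_k,x_*)g^{i_0}_{S\cup\{*\}}\,dx_*$). Also keep track of the exceptional index $\ell=k$, for which the mean-zero cancellation fails but which contributes only $O(1)$ extra choices, exactly as in the paper's indicator $\delta_{\ell\in P\cup R\cup\{1,k\}}$.
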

\begin{remark}
    We note that the $j$ dependence of the above bounds is a consequence of certain $L^2$ orthogonality implicit in the definitions of the $f^i_j$, which in turn is a consequence of the $0$ marginalization property of the $g^i_j$ noted in Lemma~\ref{lem.marginalization}. Without exploiting this $L^2$ orthogonality, one can derive similar bounds but with worse rates in $j$. This then propagates to worse rates in $j$ in Theorem~\ref{thm.main-result-intro}. Thus the marginalization property of Lemma~\ref{lem.marginalization} is essential to getting good bounds in $j$.
\end{remark}

\subsection{ODE hierarchy estimates}
\label{ss.ODE}

We now consider passing estimates on hierarchies of differential inequalities of the form~\eqref{eq.l2-hierarchy}. By repeatedly applying the Gr\"onwall inequality to the hierarchy, iterated exponential integrals appear. We introduce the following notation for these integrals.

\begin{definition}
    \label{def.I-lj-def}
    For $\beta := 4\|K\|_{L^\infty_\delta}^2,$ let
    \[I^\ell_j(t) := \beta^\ell \frac{(j+\ell-1)!}{(j-1)!} e^{-\beta j t} \int_0^t \int_0^{s_\ell} \cdots \int_0^{s_{2}} e^{-\beta \sum_{k=2}^{\ell} s_k} e^{\beta (j+\ell-1) s_1} \,ds_1 \cdots ds_\ell,\]
    where $I^0_j(t) := 1$ by convention.
\end{definition}

The $I^\ell_j$ are related in the following way.

\begin{proposition}
    \label{prop.phi-int}
    \[\beta j e^{-\beta j t} \int_0^t e^{\beta j s} I_{j+1}^\ell(s)\,ds = I^{\ell+1}_j(t).\]
\end{proposition}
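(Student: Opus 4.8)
The plan is to prove the identity by direct computation, unwinding the definition of $I^{\ell+1}_j$ and recognizing the right-hand side after performing the innermost integration by hand. Recall from Definition~\ref{def.I-lj-def} that
\[
I^{\ell+1}_j(t) = \beta^{\ell+1} \frac{(j+\ell)!}{(j-1)!} e^{-\beta j t} \int_0^t \int_0^{s_{\ell+1}} \cdots \int_0^{s_2} e^{-\beta \sum_{k=2}^{\ell+1} s_k} e^{\beta(j+\ell)s_1}\,ds_1 \cdots ds_{\ell+1}.
\]
First I would peel off the outermost integration variable, which I relabel as $s$ (playing the role of $s_{\ell+1}$), so that the remaining $\ell$-fold integral over $s_1,\dots,s_\ell$ with upper limit $s$ is exactly the integral appearing in $I^\ell_{j+1}(s)$ up to its normalizing prefactor; that is, one checks directly from the definition that
\[
\beta^{\ell} \frac{(j+\ell)!}{j!} e^{-\beta(j+1)s} \int_0^s \int_0^{s_\ell} \cdots \int_0^{s_2} e^{-\beta\sum_{k=2}^\ell s_k} e^{\beta(j+\ell)s_1}\,ds_1\cdots ds_\ell = I^\ell_{j+1}(s).
\]

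With that substitution, $I^{\ell+1}_j(t) = \beta j\, e^{-\beta j t} \int_0^t e^{-\beta s}\, e^{\beta(j+1)s}\, I^\ell_{j+1}(s)\,ds$, where the factor $\beta j$ comes from reconciling the prefactors $\frac{(j+\ell)!}{(j-1)!}$ versus $\frac{(j+\ell)!}{j!}$, and the extra $e^{-\beta s}$ is the $k=\ell+1$ term that was separated out of the sum $e^{-\beta\sum_{k=2}^{\ell+1}s_k}$. Combining $e^{-\beta s}e^{\beta(j+1)s} = e^{\beta j s}$ yields precisely $\beta j\, e^{-\beta j t}\int_0^t e^{\beta j s} I^\ell_{j+1}(s)\,ds$, which is the left-hand side of the claimed identity. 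I would also handle the base case $\ell = 0$ separately: there $I^0_{j+1} \equiv 1$, so the left-hand side is $\beta j\, e^{-\beta j t}\int_0^t e^{\beta j s}\,ds = 1 - e^{-\beta j t}$, and one checks this equals $I^1_j(t) = \beta j\, e^{-\beta j t}\int_0^t e^{\beta j s}\,ds$ by the definition, so the formula holds there too (in fact the general computation already covers it).

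The only real bookkeeping obstacle is tracking the factorial prefactors and the index shift $j \mapsto j+1$ correctly when separating the outermost integral — in particular making sure the exponent $e^{\beta(j+\ell)s_1}$ matches the "$(j+1)+(\ell-1)$" pattern required by $I^\ell_{j+1}$, and that the leftover exponential factors assemble into exactly $e^{\beta j s}$ with coefficient $\beta j$. Once the prefactor arithmetic is done carefully, the identity falls out immediately; there is no analytic difficulty, only the combinatorial care of matching definitions.
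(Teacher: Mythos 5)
Your proposal is correct and is essentially the paper's own argument run in reverse: the paper substitutes the definition of $I^\ell_{j+1}$ into the left-hand side and collects exponentials and prefactors to recognize $I^{\ell+1}_j$, whereas you peel the outermost variable off $I^{\ell+1}_j$ to recognize $I^\ell_{j+1}$ inside; the bookkeeping ($\frac{(j+\ell)!}{(j-1)!}\cdot\frac{j!}{(j+\ell)!}=j$, $e^{-\beta s}e^{\beta(j+1)s}=e^{\beta js}$) checks out. The separate treatment of $\ell=0$ also matches the paper.
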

\begin{proof}
    If $\ell =0,$ we see that
    \[\beta j e^{-\beta j t} \int_0^t e^{\beta j s} I_{j+1}^0(s)\,ds = \beta j e^{-\beta j t} \int_0^t e^{\beta (j+1 -1) s_1}\,ds_1 = I^{1}_j.\]
    Otherwise, we compute
    \begin{align*}
        &\beta j e^{-\beta j t} \int_0^t e^{\beta j s_{\ell+1}} I_{j+1}^\ell(s_{\ell+1})\,ds_{\ell+1} 
        \\&\quad = \beta^{\ell +1} \frac{(j+\ell)!}{(j-1)!} e^{-\beta j t}\int_0^t e^{\beta j s_{\ell+1}} e^{-\beta (j+1)s_{\ell+1}} \int_0^{s_{\ell+1}} \cdots \int_0^{s_2} e^{-\beta \sum_{k=2}^\ell s_k}  e^{\beta (j+1+\ell-1)s_1}\,ds_1 \cdots ds_\ell ds_{\ell+1}
        \\&\quad = \beta^{\ell +1} \frac{(j+\ell+1 -1)!}{(j-1)!} e^{-\beta j t}\int_0^t \int_0^{s_{\ell+1}} \cdots \int_0^{s_2} e^{-\beta \sum_{k=2}^{\ell+1} s_k}  e^{\beta (j+\ell+1 - 1)s_1}\,ds_1 \cdots ds_{\ell+1}
        \\&\quad = I^{\ell+1}_j(t),
    \end{align*}
    as desired.
\end{proof}

It is prefactors of $I^\ell_j(t)$ that will give sufficient decay when iterating up the hierarchy to prove the bounds we require. As such, we need to understand how the $I^\ell_j$ decay as $\ell$ gets large. The following proposition is the first such estimate and follows from a simple induction.

\begin{proposition}
    \label{prop.polynomial-bound}
    For any $j,\ell \in \N$ and any $b \in \N,$
    \[I^\ell_j(t) \leq \Big(\frac{j+b}{j+\ell}\Big)^b e^{\beta b t}.\]
\end{proposition}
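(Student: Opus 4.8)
The plan is to induct on $\ell$, with $b$ fixed, using the recursion of Proposition~\ref{prop.phi-int} to pass from level $\ell$ (at index $j+1$) to level $\ell+1$ (at index $j$). The base case $\ell=0$ is immediate: $I^0_j(t)=1\le\big(\tfrac{j+b}{j}\big)^b e^{\beta b t}$ since both factors on the right are at least $1$. It is worth noting already here that the induction hypothesis must be stated uniformly in $j$, i.e.\ as ``$I^\ell_{j'}(t)\le\big(\tfrac{j'+b}{j'+\ell}\big)^b e^{\beta b t}$ for every $j'$,'' because the recursion shifts the particle index.

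For the inductive step, suppose the bound holds at level $\ell$ for every index. Applying Proposition~\ref{prop.phi-int} and then the induction hypothesis with $j+1$ in place of $j$,
\[I^{\ell+1}_j(t)=\beta j e^{-\beta j t}\int_0^t e^{\beta j s}I^\ell_{j+1}(s)\,ds\le\Big(\frac{j+1+b}{j+1+\ell}\Big)^b\,\beta j\,e^{-\beta j t}\int_0^t e^{\beta(j+b)s}\,ds.\]
Evaluating the integral and using the elementary bound $e^{-\beta j t}\big(e^{\beta(j+b)t}-1\big)\le e^{\beta b t}$, this becomes
\[I^{\ell+1}_j(t)\le\Big(\frac{j+1+b}{j+1+\ell}\Big)^b\frac{j}{j+b}\,e^{\beta b t}.\]
Since $j+1+\ell=j+\ell+1$, the target bound $\big(\tfrac{j+b}{j+\ell+1}\big)^b e^{\beta b t}$ has the same $\ell$-dependent denominator, so after cancellation everything reduces to the purely algebraic inequality $j\,(j+b+1)^b\le(j+b)^{b+1}$.

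I do not expect a genuine obstacle: the only piece of real content is this last inequality, and it follows from weighted AM--GM with weights $\tfrac{1}{b+1}$ on $j$ and $\tfrac{b}{b+1}$ on $j+b+1$, whose weighted arithmetic mean is exactly $\tfrac{j}{b+1}+\tfrac{b(j+b+1)}{b+1}=j+b$; raising $j^{1/(b+1)}(j+b+1)^{b/(b+1)}\le j+b$ to the power $b+1$ gives the claim (with equality when $b=0$). The main thing to be careful about is the bookkeeping noted above---invoking the hypothesis at $j+1$ so that the denominators match and cancel cleanly---together with the sign in $e^{-\beta j t}(e^{\beta(j+b)t}-1)\le e^{\beta b t}$.
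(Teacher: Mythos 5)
Your proof is correct. It is essentially the paper's argument reorganized: the paper bounds the $\ell$-fold iterated integral directly by inserting the factor $e^{\beta b s_1}$ and telescoping, arriving at $I^\ell_j(t)\le e^{\beta b t}\prod_{i=0}^{\ell-1}\tfrac{j+i}{j+i+b}$ and only then bounding the product by $\big(\tfrac{j+b}{j+\ell}\big)^b$ via cancellation, whereas you run the same mechanism as an induction on $\ell$ through the recursion of Proposition~\ref{prop.phi-int}. The only substantive difference is that your version commits to the closed form $\big(\tfrac{j'+b}{j'+\ell}\big)^b$ at every step, which forces you to verify the per-step inequality $j(j+b+1)^b\le (j+b)^{b+1}$ (your weighted AM--GM step); the paper avoids this by carrying the exact product to the end. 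Both yield the same bound, and your bookkeeping (stating the hypothesis uniformly in $j$, and the sign in $e^{-\beta j t}(e^{\beta(j+b)t}-1)\le e^{\beta b t}$) is handled correctly.
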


\begin{proof}
    We note that for any $b \geq 0,$
    \begin{align*}
        &\int_0^t \int_0^{s_\ell} \cdots \int_0^{s_{2}} e^{-\beta \sum_{k=2}^{\ell} s_k} e^{\beta (j+\ell-1) s_1} \,ds_1 \cdots ds_\ell 
        \\&\quad\leq \int_0^t \int_0^{s_\ell} \cdots \int_0^{s_3}  e^{-\beta \sum_{k=2}^{\ell} s_k}\int_0^{s_{2}}  e^{\beta (j+\ell-1+b) s_1} \,ds_1 \cdots ds_\ell
        \\&\quad \leq \frac{1}{\beta(j+\ell - 1+b)}\int_0^t \int_0^{s_\ell} \cdots \int_0^{s_3}  e^{-\beta \sum_{k=2}^{\ell} s_k} e^{\beta(j+\ell -1 +b)s_2}  ds_2 \cdots ds_\ell 
        \\&\quad= \frac{1}{\beta(j+\ell-1+b)}\int_0^t \int_0^{s_\ell} \cdots \int_0^{s_4}e^{-\beta \sum_{k=3}^{\ell} s_k} \int_0^{s_3}   e^{\beta(j+\ell-2+b )s_2}  ds_2 \cdots ds_\ell
        \\&\quad \leq \cdots \leq e^{\beta (j+b)t} \prod_{i=1}^{\ell} \frac{1}{\beta (j+\ell-i+b)} = e^{\beta (j+b)t} \beta ^{-\ell}\prod_{i=0}^{\ell-1} \frac{1}{j+i+b}.
    \end{align*}
    Thus, for $b \in \N$, exploiting cancellation in the product,
    \[I^\ell_j(t) \leq e^{\beta b t}\prod_{i=0}^{\ell-1} \frac{j+i}{j+i+b} \leq \Big(\frac{j+b}{j+\ell}\Big)^b e^{\beta b t},\]
    allowing us to conclude.
\end{proof}

For some of the estimates, the above polynomial decay will be sufficient; for others, we will need an exponential rate of decay. This exponential rate can be found by simply choosing the polynomial power $b$ optimally in a time dependent way, as the below proposition shows.

\begin{proposition}
    \label{prop.exp-bound}
    For any $j, \ell \in \N$ and for any $t \geq 0,$ if
    \[j \leq \tfrac{1}{3} e^{-2\beta t-1} \ell,\]
    then
    \[I^\ell_j(t) \leq \exp(-\tfrac{1}{3} e^{-2\beta t -1} \ell).\]
\end{proposition}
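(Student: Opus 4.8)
The plan is to optimize the bound from Proposition~\ref{prop.polynomial-bound} over the integer parameter $b$. That proposition gives, for any $b \in \N$,
\[
I^\ell_j(t) \leq \Big(\frac{j+b}{j+\ell}\Big)^b e^{\beta b t}.
\]
The right-hand side is a product of an exponentially growing factor $e^{\beta b t}$ and a factor $\big((j+b)/(j+\ell)\big)^b$ which, when $j+b$ is much smaller than $j+\ell$, is exponentially small in $b$. So the natural strategy is to pick $b$ proportional to $\ell$ but small enough that the base $(j+b)/(j+\ell)$ stays bounded away from $1$ and in fact beats $e^{\beta t}$.

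The key step is the choice of $b$. Under the hypothesis $j \leq \tfrac13 e^{-2\beta t - 1}\ell$, I would set $b := \lfloor \tfrac13 e^{-2\beta t - 1}\ell \rfloor$ (or the nearest integer; one should check $b \geq 1$, which holds as long as $\ell$ is large enough, and handle small $\ell$ separately or absorb it). With this choice, $j \leq b$, so $j + b \leq 2b \leq \tfrac23 e^{-2\beta t-1}\ell \leq \tfrac23 e^{-2\beta t - 1}(j+\ell)$, hence
\[
\frac{j+b}{j+\ell} \leq \tfrac23 e^{-2\beta t - 1} \leq e^{-2\beta t - 1},
\]
using $\tfrac23 \leq 1 \leq e^{0}$. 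Therefore
\[
\Big(\frac{j+b}{j+\ell}\Big)^b e^{\beta b t} \leq \big(e^{-2\beta t - 1}\big)^b e^{\beta b t} = e^{-\beta b t - b} \leq e^{-b} \leq \exp\!\big(-\tfrac13 e^{-2\beta t - 1}\ell + 1\big),
\]
where the last inequality uses $b \geq \tfrac13 e^{-2\beta t-1}\ell - 1$. This is slightly weaker than the claimed $\exp(-\tfrac13 e^{-2\beta t - 1}\ell)$ because of the floor; the fix is to carry a little slack — e.g.\ use the hypothesis to get a factor like $e^{-2\beta t - 1 - c}$ in the base for some small $c>0$, or equivalently note one can replace the constant $\tfrac13$ by a slightly smaller one throughout and then round. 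I would phrase the final chain of inequalities so the $+1$ from the floor is swallowed. Alternatively, if $\ell$ is small enough that the floor is problematic, the hypothesis forces $j$ small and one can check the bound directly from $I^\ell_j(t) \leq 1$ (which follows from Proposition~\ref{prop.polynomial-bound} with $b=0$) against $\exp(-\tfrac13 e^{-2\beta t-1}\ell)$, which is close to $1$ in that regime.

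The only real obstacle is bookkeeping with the floor function and the precise constants — making sure that after rounding $b$ down to an integer the clean exponential $\exp(-\tfrac13 e^{-2\beta t-1}\ell)$ survives. This is handled by building in a fixed multiplicative slack (the factor $\tfrac23$ above, together with the $-1$ in the exponent of the hypothesis, gives room to spare): one shows $b \geq \tfrac13 e^{-2\beta t-1}\ell$ can be arranged up to harmless constants, or more robustly, that $e^{-\beta bt - b}$ with $b \geq \tfrac13 e^{-2\beta t-1}\ell - 1$ is still at most $\exp(-\tfrac13 e^{-2\beta t-1}\ell)$ after absorbing the $e^{1}$ into the slack from $\tfrac23 < 1$. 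No new analytic input beyond Proposition~\ref{prop.polynomial-bound} is needed.
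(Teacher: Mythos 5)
Your strategy is exactly the paper's: optimize the parameter $b$ in Proposition~\ref{prop.polynomial-bound} by taking $b$ comparable to $\delta\ell$ with $\delta=\tfrac13 e^{-2\beta t-1}$. The difference is in how the rounding is handled, and here your write-up has a loose end that your proposed patch does not actually close. With $b=\lfloor\delta\ell\rfloor$ you lose a factor of $e$ from $b\ge\delta\ell-1$, and you claim the slack $(2/3)^b$ swallows it; but $(2/3)^b e\le 1$ requires $b\ge 3$, and for $b=1,2$ (which do occur: the hypothesis together with $j\ge1$ forces $\delta\ell\ge1$ but allows $\delta\ell<3$) the chain of inequalities as you wrote it gives, e.g., $\tfrac23 e^{-1}\approx0.245$ against a target of $e^{-2}\approx0.135$. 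The case can still be rescued by keeping the $e^{-\beta bt}$ factor you discarded and tracking $\delta\ell$ more carefully, but that is extra work. The paper avoids the issue entirely by rounding \emph{up}: with $b=\lceil\delta\ell\rceil$ one has $b\ge\delta\ell$, so $(3\delta)^b\le(3\delta)^{\delta\ell}$ with no loss (as $3\delta=e^{-2\beta t-1}<1$), and the only cost is on the growing factor, $e^{\beta bt}\le e^{2\beta\delta\ell t}$ via $\lceil\delta\ell\rceil\le2\delta\ell$ (valid precisely because $\delta\ell\ge1$). That factor of $2$ is exactly what the $e^{-2\beta t}$ in the definition of $\delta$ is there to absorb: $\delta\ell(2\beta t+\log(3\delta))=-\delta\ell$ identically. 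So: right idea and right key lemma, but round up rather than down, and note that the hypothesis rules out the $\delta\ell<1$ regime you were worried about.
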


\begin{remark}
    The above proposition is analogous to~\cite[Proposition 5.1]{lacker_hierarchies_2023}, although with a different proof using elementary techniques.
\end{remark}

\begin{proof}
    Let
    \[\delta := \tfrac{1}{3} e^{-2\beta t -1}.\]
    We note that
    \[1 \leq j \leq \delta \ell,\]
    thus $\lceil \delta \ell \rceil \leq 2 \delta \ell.$ Then, letting $b = \lceil \delta \ell \rceil,$
    by Proposition~\ref{prop.polynomial-bound} we have that
     \[I^\ell_j(t) \leq \Big(\frac{j+b}{j+\ell}\Big)^b e^{\beta b t} \leq (3\delta)^{\lceil \delta  \ell \rceil} e^{2\beta \delta \ell t} \leq \exp(\delta \ell (2\beta  t + \log(3\delta))) = e^{-\delta \ell},\]
    where we use that by definition
    \[2\beta t + \log(3\delta) = -1.\]
    Plugging the definition of $\delta$ into the bound, we conclude.
\end{proof}

Now that we have some control on the $I^\ell_j$, we are ready to bound the hierarchies of differential inequalities. The following is the first step to getting the correct bound, given by inductively applying Gr\"onwall's inequality and using Proposition~\ref{prop.phi-int}. The first bound~\eqref{eq.ode-hierarchy-no-t0} is sufficient to give Theorem~\ref{thm.main-result-intro} for short times, but the second bound~\eqref{eq.ode-hierarchy-t0} is necessary to get the result for all times.

\begin{proposition}
    Suppose $x_k \geq 0$ satisfy the hierarchy of differential inequalities
    \[\begin{cases}
    \dot x_k \leq \beta k ( \alpha_k x_{k+1} - x_k) + r_k\\
    x_k(0) =0,
    \end{cases}
    \]
    for some $r_k,\alpha_k \geq 0$ constants. Then we have the bounds for any $j,\ell \in \N$, for any $t \geq 0$
    \begin{equation}
    \label{eq.ode-hierarchy-no-t0}
    x_j(t) \leq A_j^\ell I^\ell_{j}(t)\sup_{s \in [0, t]} x_{j+\ell}(s) + \frac{1}{\beta}\sum_{k=0}^{\ell-1} A_j^{k+1}I_j^{k+1}(t) \frac{r_{j+k}}{\alpha_{j+k}(j+k)}, \end{equation}
    and for any $t_0 \geq 0, t \geq t_0,$
    \begin{align}
    x_j(t) &\leq A_j^\ell I^\ell_j(t-t_0) \sup_{s \in [t_0,t]} x_{j+\ell}(s) +  \sum_{k=1}^\ell A_j^\ell I^k_{j+\ell -k}(t_0) I^{\ell-k}_j(t-t_0) \sup_{s \in [0, t_0]} x_{j+\ell}(s) 
    \notag\\&\qquad+ \frac{1}{\beta}\sum_{k=0}^{\ell-1} A_j^{k+1}I_j^{k+1}(t) \frac{r_{j+k}}{\alpha_{j+k}(j+k)},
    \label{eq.ode-hierarchy-t0}
    \end{align}
    where
    \[A^k_j := \prod_{i=j}^{j+k-1} \alpha_i,\]
    and we take $A_j^0 = 1$ by convention.
\end{proposition}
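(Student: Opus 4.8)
Both bounds will be obtained by iterating the scalar Gr\"onwall inequality across the hierarchy and using Proposition~\ref{prop.phi-int} to fold the resulting iterated integrals into the $I^\ell_j$. For a nonnegative function $y$ set $\mathcal{G}_j[y](t):=\beta j\,e^{-\beta j t}\int_0^t e^{\beta j s}y(s)\,ds$; then $\mathcal{G}_j$ is a positive linear operator with $\mathcal{G}_j[1]=I^1_j$ and $\mathcal{G}_j[I^\ell_{j+1}]=I^{\ell+1}_j$ (Proposition~\ref{prop.phi-int}), so multiplying $\dot x_j\le\beta j(\alpha_j x_{j+1}-x_j)+r_j$ by $e^{\beta j t}$ and integrating from $0$ gives $x_j(t)\le\alpha_j\mathcal{G}_j[x_{j+1}](t)+\tfrac{1}{\beta j}r_j I^1_j(t)$. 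I would prove \eqref{eq.ode-hierarchy-no-t0} by induction on $\ell$: the case $\ell=0$ is immediate, and for the step one substitutes the inductive hypothesis at level $\ell$ for $x_{j+1}$ into $\alpha_j\mathcal{G}_j[x_{j+1}](t)$, uses positivity of $\mathcal{G}_j$ together with $\sup_{u\in[0,s]}x_{j+1+\ell}(u)\le\sup_{u\in[0,t]}x_{j+1+\ell}(u)$ for $s\le t$ to pull the supremum out and to distribute $\mathcal{G}_j$ over the finite remainder sum, and then invokes $\mathcal{G}_j[I^m_{j+1}]=I^{m+1}_j$, $\alpha_j A^m_{j+1}=A^{m+1}_j$ and $\mathcal{G}_j[1]=I^1_j$; after reindexing the sum and absorbing $\tfrac{1}{\beta j}r_j I^1_j(t)$ as the $k=0$ term one recovers the level-$(\ell+1)$ bound. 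This part is routine.

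For \eqref{eq.ode-hierarchy-t0}, fix $t_0$ and for $t\ge t_0$ integrate the differential inequality from $t_0$ instead of from $0$. With $\widetilde{\mathcal{G}}^{t_0}_j[y](t):=\beta j\,e^{-\beta j t}\int_{t_0}^t e^{\beta j s}y(s)\,ds$ — which equals $\mathcal{G}_j[y(\cdot+t_0)](t-t_0)$ by the substitution $s=t_0+\sigma$, and hence satisfies $\widetilde{\mathcal{G}}^{t_0}_j\cdots\widetilde{\mathcal{G}}^{t_0}_{j+m-1}[1]=I^m_j(\cdot-t_0)$ by the shifted form of Proposition~\ref{prop.phi-int} — this gives $x_j(t)\le e^{-\beta j(t-t_0)}x_j(t_0)+\alpha_j\widetilde{\mathcal{G}}^{t_0}_j[x_{j+1}](t)+\tfrac{1}{\beta j}r_j I^1_j(t-t_0)$. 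Iterating $\ell$ times and replacing $x_{j+\ell}$ by its supremum over $[t_0,t]$ at the last level yields
\[x_j(t)\le A^\ell_j I^\ell_j(t-t_0)\sup_{s\in[t_0,t]}x_{j+\ell}(s)+\sum_{m=0}^{\ell-1}A^m_j\Phi_{j,m}\,x_{j+m}(t_0)+\frac{1}{\beta}\sum_{m=0}^{\ell-1}A^{m+1}_j I^{m+1}_j(t-t_0)\frac{r_{j+m}}{\alpha_{j+m}(j+m)},\]
where $\Phi_{j,0}:=e^{-\beta j(t-t_0)}$ and $\Phi_{j,m}:=\widetilde{\mathcal{G}}^{t_0}_j\cdots\widetilde{\mathcal{G}}^{t_0}_{j+m-1}[e^{-\beta(j+m)(\cdot-t_0)}](t)$ for $m\ge1$, so that $0\le\Phi_{j,m}\le I^m_j(t-t_0)$ since $e^{-\beta(j+m)(\cdot-t_0)}\le1$. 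I would then bound each boundary term $x_{j+m}(t_0)$ by applying the already-proved estimate \eqref{eq.ode-hierarchy-no-t0} with $(j,\ell,t)$ replaced by $(j+m,\ell-m,t_0)$; its supremum term $A^{\ell-m}_{j+m}I^{\ell-m}_{j+m}(t_0)\sup_{s\in[0,t_0]}x_{j+\ell}(s)$, multiplied by $A^m_j\Phi_{j,m}$, summed over $m$, and bounded using $A^m_j A^{\ell-m}_{j+m}=A^\ell_j$ and $\Phi_{j,m}\le I^m_j(t-t_0)$, gives after the reindexing $k=\ell-m$ precisely the second sum in \eqref{eq.ode-hierarchy-t0}.

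The step I expect to be the main obstacle is verifying that all the $r$-contributions recombine to \emph{exactly} $\tfrac{1}{\beta}\sum_{k=0}^{\ell-1}A^{k+1}_j I^{k+1}_j(t)\tfrac{r_{j+k}}{\alpha_{j+k}(j+k)}$ — the same remainder as in \eqref{eq.ode-hierarchy-no-t0}, rather than a time-split version of it. Collecting the coefficient of $\tfrac{r_{j+p}}{\alpha_{j+p}(j+p)}$ from the last display and from the remainders produced when \eqref{eq.ode-hierarchy-no-t0} is applied to the $x_{j+m}(t_0)$ (and using $A^m_j A^{p+1-m}_{j+m}=A^{p+1}_j$), this is equivalent to the identity
\[I^{p+1}_j(t-t_0)+\sum_{m=0}^{p}\Phi_{j,m}\,I^{p+1-m}_{j+m}(t_0)=I^{p+1}_j(t)\qquad(0\le p\le\ell-1).\]
I would prove this by noting the elementary splitting $\mathcal{G}_j[y](t)=e^{-\beta j(t-t_0)}\mathcal{G}_j[y](t_0)+\widetilde{\mathcal{G}}^{t_0}_j[y](t)$ for $t\ge t_0$ (split $\int_0^t=\int_0^{t_0}+\int_{t_0}^t$), and applying it repeatedly while peeling the operators off the representation $I^{p+1}_j=\mathcal{G}_j\mathcal{G}_{j+1}\cdots\mathcal{G}_{j+p}[1]$ from the outside: the $m$-th peeling contributes the term $\Phi_{j,m}I^{p+1-m}_{j+m}(t_0)$, while the fully-$\widetilde{\mathcal{G}}^{t_0}$ part that survives is $\widetilde{\mathcal{G}}^{t_0}_j\cdots\widetilde{\mathcal{G}}^{t_0}_{j+p}[1](t)=I^{p+1}_j(t-t_0)$. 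This gives the identity, and assembling the three stages proves \eqref{eq.ode-hierarchy-t0}.
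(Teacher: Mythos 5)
Your proof is correct. The argument for \eqref{eq.ode-hierarchy-no-t0} is essentially the paper's: the same induction in $\ell$ built on one Gr\"onwall step plus Proposition~\ref{prop.phi-int}. For \eqref{eq.ode-hierarchy-t0}, however, you take a genuinely different route. The paper proves it by a second induction in $\ell$: at each step it applies Gr\"onwall once, splits the resulting integral at $t_0$, bounds the integrand on $[0,t_0]$ by the already-proven \eqref{eq.ode-hierarchy-no-t0} and on $[t_0,t]$ by the inductive hypothesis, and — crucially — keeps the remainder in the inductive hypothesis expressed with argument $t$ (not $t-t_0$), so the two $r$-integrals simply concatenate into $\int_0^t$ and Proposition~\ref{prop.phi-int} applies directly; no splitting identity for the $I^\ell_j$ is ever needed. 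You instead run a non-inductive two-stage argument: iterate the shifted Gr\"onwall operator from $t_0$ to produce boundary terms $x_{j+m}(t_0)$, feed \eqref{eq.ode-hierarchy-no-t0} into those, and then recombine the remainders via the Chapman--Kolmogorov-type identity $I^{p+1}_j(t-t_0)+\sum_{m=0}^{p}\Phi_{j,m}\,I^{p+1-m}_{j+m}(t_0)=I^{p+1}_j(t)$, which you correctly identify as the crux and correctly prove by peeling $\mathcal{G}_j[y](t)=e^{-\beta j(t-t_0)}\mathcal{G}_j[y](t_0)+\widetilde{\mathcal{G}}^{t_0}_j[y](t)$ off the representation $I^{p+1}_j=\mathcal{G}_j\cdots\mathcal{G}_{j+p}[1]$ (the identity checks out, e.g.\ at $p=0$ it is $1-e^{-\beta j(t-t_0)}+e^{-\beta j(t-t_0)}(1-e^{-\beta jt_0})=1-e^{-\beta jt}$). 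Your version costs an extra lemma but exposes the semigroup-like structure of the $I^\ell_j$ and makes transparent why the $r$-remainder in \eqref{eq.ode-hierarchy-t0} is identical to that in \eqref{eq.ode-hierarchy-no-t0}; the paper's induction buys brevity by arranging the bookkeeping so that the identity is never stated.
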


\begin{proof}
    We note that by Gr\"onwall's inequality,
    \[x_j \leq \beta \alpha_j j e^{-\beta j t} \int_0^t e^{\beta j s} \Big(x_{j+1}(s) + \frac{r_j}{\beta \alpha_j j}\Big)\,ds.\]
    Note that
    \begin{equation}
    \label{eq.A-jk-induct}
    \alpha_j A_{j+1}^k = A_j^{k+1}
    \end{equation}
    We first prove~\eqref{eq.ode-hierarchy-no-t0}. We prove this bound inductively in $\ell,$ for all $j$. For $\ell=0$, the bound is direct from $I^0_j(t) = A^0_j = 1$. Then inductively, we use Gr\"onwall's inequality together with the inductive hypothesis to give that $x_j$ is bounded by
    \begin{align*}
        &\beta \alpha_j j e^{-\beta j t} \int_0^t e^{\beta j s}\Big(A_{j+1}^{\ell-1} I^{\ell-1}_{j+1}(s) \sup_{r \in [0,t]} x_{j+\ell}(r) + \frac{1}{\beta} \sum_{k=0}^{\ell-2} A_{j+1}^{k+1} I_{j+1}^{k+1}(s) \frac{r_{j+1+k}}{\alpha_{j+1+k}(j+1+k)} + \frac{r_j}{\beta \alpha_j j}\Big)\,ds
        \\&\qquad= \beta j e^{-\beta j t} \int_0^t e^{\beta j s}\Big(A_{j}^{\ell} I^{\ell-1}_{j+1}(s) \sup_{r \in [0,t]} x_{j+\ell}(r) + \frac{1}{\beta} \sum_{k=0}^{\ell-1} A_{j}^{k+1} I_{j+1}^{k}(s) \frac{r_{j+k}}{\alpha_{j+k}(j+k)}\Big)\,ds, 
    \end{align*}
    where we use~\eqref{eq.A-jk-induct} on the second line. The using Proposition~\ref{prop.phi-int}, we get~\eqref{eq.ode-hierarchy-no-t0}.

    We now turn our attention to~\eqref{eq.ode-hierarchy-t0}. Again we prove it inductively in $\ell$, for all $j$. For $\ell=0$, it is again direct from $I^0_j(t) = A^0_j = 1$. Then inductively, we use Gr\"onwall's inequality then~\eqref{eq.ode-hierarchy-no-t0} to control the integral on $[0,t_0]$ and the inductive hypothesis to control the integral on $[t_0,t]$. This  gives
    \begin{align}
        \label{eq.ode-hierarchy-t1}
        x_j &\leq \beta \alpha_j j e^{-\beta j t} \int_0^{t_0} e^{\beta j s} A_{j+1}^{\ell-1} I^{\ell-1}_{j+1}(s) \sup_{r \in [0,t_0]} x_{j+\ell}(r)\,ds
        \\&+\beta \alpha_j j e^{-\beta j t} \int_{t_0}^t e^{\beta js} \bigg(\sum_{k=1}^{\ell-1} A_{j+1}^{\ell-1} I^k_{j+\ell -k}(t_0) I^{\ell-1-k}_{j+1}(s-t_0) \sup_{r \in [0, t_0]} x_{j+\ell}(r) \notag\\&\qquad\qquad\qquad\qquad\qquad\qquad\qquad\qquad\qquad+A_{j+1}^{\ell-1} I^{\ell-1}_{j+1}(s-t_0) \sup_{r \in [t_0,t]} x_{j+\ell}(r)\bigg)\,ds
        \label{eq.ode-hierarchy-t2}
        \\& + \beta \alpha_j j e^{-\beta j t} \int_{0}^t e^{\beta js}\Big( \frac{1}{\beta}\sum_{k=0}^{\ell-2} A_{j+1}^{k+1}I_{j+1}^{k+1}(s) \frac{r_{j+1+k}}{\alpha_{j+1+k}(j+1+k)}+ \frac{r_j}{\beta \alpha_j j}\Big)\,ds.
        \label{eq.ode-hierarchy-t3}
    \end{align}
    We then note that top term~\eqref{eq.ode-hierarchy-t1} is equal to 
    \[e^{-\beta j (t-t_0)} A^\ell_j I^{\ell}_j(t_0) \sup_{s \in [0,t_0]} x_{j+\ell}(s) \leq  A^\ell_j I^{\ell}_j(t_0)I^0_j(t-t_0) \sup_{s \in [0,t_0]} x_{j+\ell}(s),\]
    where we use~\eqref{eq.A-jk-induct}, Proposition~\ref{prop.phi-int}, and the brutal bound
    $e^{-\beta j(t-t_0)} \leq 1$. Then the middle term~\eqref{eq.ode-hierarchy-t2} is equal to 
    \begin{align*}
        &\beta j e^{-\beta j (t-t_0)} \int_{0}^{t-t_0} e^{\beta j(s-t_0)} \Big(\sum_{k=1}^{\ell-1} A_{j}^{\ell} I^k_{j+\ell -k}(t_0) I^{\ell-1-k}_{j+1}(s) \sup_{r \in [0, t_0]} x_{j+\ell}(r)+A_{j}^{\ell} I^{\ell-1}_{j+1}(s) \sup_{r \in [t_0,t]} x_{j+\ell}(r)\Big)\,ds\\
        &\qquad = \sum_{k=1}^{\ell-1} A_{j}^{\ell} I^k_{j+\ell -k}(t_0) I^{\ell-k}_{j}(t-t_0) \sup_{s \in [0, t_0]} x_{j+\ell}(s)  +A_{j}^{\ell} I^{\ell}_{j}(s) \sup_{s \in [t_0,t]} x_{j+\ell}(s),
    \end{align*}
    we we again use~\eqref{eq.A-jk-induct} and Proposition~\ref{prop.phi-int}. Lastly, we note that the third term~\eqref{eq.ode-hierarchy-t3} is equal to
    \[\frac{1}{\beta}\sum_{k=0}^{\ell-1} A_j^{k+1}I_j^{k+1}(t) \frac{r_{j+k}}{\alpha_{j+k}(j+k)},\]
    where the computation follows exactly as in the proof of~\eqref{eq.ode-hierarchy-no-t0}. Combining these three equalities we get~\eqref{eq.ode-hierarchy-t0}.
\end{proof}

\begin{note}
    We remark that we take a very rough bound in the above argument, taking $e^{-\beta j(t-t_0)} \leq 1$. In other applications, one may wish to avoid taking this bound, but in this application, we will be interested in $t-t_0$ very small and $\sup_{s \in [0,t_0]} x_{j+\ell}$ already $O(1)$, as such we won't need the extra decay this exponential provides. Thus for simplicity, we discard it and get the above proposition.
\end{note}

We now can apply the exponential decay bound given by Proposition~\ref{prop.exp-bound} to~\eqref{eq.ode-hierarchy-t0} to give the following.

\begin{proposition}
    \label{prop.ode-hierarchy-t0-iter}
    Suppose $x_k \geq 0$ satisfy the hierarchy of differential inequalities
     \[\begin{cases}
    \dot x_k \leq \beta k ( \alpha_k x_{k+1} - x_k) + r_k\\
    x_k(0) =0,
    \end{cases}
    \]
    for some $r_k, \alpha_k \geq 0$ constants. Then for any $0 \leq t_0 \leq t$ and $j,\ell \in \N$ such that
    \begin{equation}
    \label{eq.j-cond}
    j \leq e^{-2\beta t -6}\ell,
    \end{equation}
    we have the bound
    \begin{align*}
            x_j(t) &\leq A^\ell_j  \exp(- e^{-2\beta (t-t_0)-3} \ell) \sup_{s \in [t_0,t]} x_{j+\ell}(s) + A_j^\ell e^{2\beta t +7}   \exp(- e^{-2\beta t -7}\ell)\sup_{s \in [0,t_0]}x_{j+\ell}(s) \
            \\&\qquad\qquad+ \frac{1}{\beta}\sum_{k=0}^{\ell-1} A_j^{k+1}I_j^{k+1}(t) \frac{r_{j+k}}{\alpha_{j+k}(j+k)}.
    \end{align*}
\end{proposition}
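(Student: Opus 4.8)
The plan is to feed the already-established bound~\eqref{eq.ode-hierarchy-t0} into the exponential decay estimate of Proposition~\ref{prop.exp-bound}, handling its three terms one at a time. Abbreviate $c_0:=\tfrac13 e^{-2\beta t_0-1}$ and $c_1:=\tfrac13 e^{-2\beta(t-t_0)-1}$, and record two facts used throughout: since $t_0\le t$ and $t-t_0\le t$ we have $c_0,c_1\ge \tfrac1{3e}e^{-2\beta t}$, while $c_0c_1=\tfrac1{9e^2}e^{-2\beta t}$ exactly (the exponents $-2\beta t_0$ and $-2\beta(t-t_0)$ add to $-2\beta t$). I will also use freely the trivial bound $I^m_n(s)\le 1$, which follows directly from the definition of $I^m_n$ (or the $b=0$ case of Proposition~\ref{prop.polynomial-bound}).

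For the first term $A^\ell_j I^\ell_j(t-t_0)\sup_{s\in[t_0,t]}x_{j+\ell}(s)$ of~\eqref{eq.ode-hierarchy-t0}, I would apply Proposition~\ref{prop.exp-bound} with $t$ replaced by $t-t_0$: the hypothesis $j\le e^{-2\beta t-6}\ell$ forces $j\le \tfrac13 e^{-2\beta(t-t_0)-1}\ell$, since $e^{-6}\le\tfrac1{3e}$ and $e^{-2\beta(t-t_0)}\ge e^{-2\beta t}$, so $I^\ell_j(t-t_0)\le\exp(-c_1\ell)\le\exp(-e^{-2\beta(t-t_0)-3}\ell)$, the last step using $\tfrac1{3e}\ge e^{-3}$. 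This is exactly the first summand of the claimed inequality. The last term of~\eqref{eq.ode-hierarchy-t0} is copied over verbatim, so nothing is to be done there.

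The heart of the argument is the middle sum $S:=A^\ell_j\big(\sum_{k=1}^\ell I^k_{j+\ell-k}(t_0)\,I^{\ell-k}_j(t-t_0)\big)\sup_{s\in[0,t_0]}x_{j+\ell}(s)$. Since $j<c_0\ell$ (again because $e^{-6}<\tfrac1{3e}$), the threshold $k_\ast:=\lceil(\ell+j)/(1+c_0)\rceil$ satisfies $k_\ast\le\ell$, and I split the sum at $k_\ast$. For $k\ge k_\ast$ one has $j+\ell-k\le c_0k$, so Proposition~\ref{prop.exp-bound} applied to $I^k_{j+\ell-k}(t_0)$ gives $I^k_{j+\ell-k}(t_0)\le\exp(-c_0k)\le\exp\big(-\tfrac{c_0}{1+c_0}\ell\big)$, with the other factor $\le1$. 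For $k<k_\ast$ one has $\ell-k\ge \tfrac{c_0\ell-j}{1+c_0}\ge\tfrac12\tfrac{c_0}{1+c_0}\ell$, and then $j\le e^{-2\beta t-6}\ell\le c_1(\ell-k)$: after substituting this lower bound on $\ell-k$ and the identity for $c_0c_1$, the required inequality reduces to one of the form $e^{6}\ge\mathrm{const}$ (roughly $e^6\ge 18e^2+6e\approx 149$), which holds with room to spare. Hence Proposition~\ref{prop.exp-bound} applied to $I^{\ell-k}_j(t-t_0)$ gives $I^{\ell-k}_j(t-t_0)\le\exp(-c_1(\ell-k))\le\exp\big(-\tfrac{c_0c_1}{2(1+c_0)}\ell\big)$, with the other factor $\le1$. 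In either regime every summand is at most $\exp(-\lambda\ell)$ with $\lambda:=\tfrac{c_0c_1}{2(1+c_0)}\ge c\,e^{-2\beta t}$ for an explicit $c>0$; since there are at most $\ell$ summands, $S\le A^\ell_j\,\ell\,e^{-\lambda\ell}\sup_{s\in[0,t_0]}x_{j+\ell}(s)$. Finally $\lambda\ge 2e^{-2\beta t-7}$ lets me peel off a factor $\exp(-e^{-2\beta t-7}\ell)$, and the residual satisfies $\ell\,e^{-(\lambda/2)\ell}\le 2/(e\lambda)\le e^{2\beta t+7}$, which yields the second summand of the claimed inequality.

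The step I expect to be the real obstacle is this middle sum — specifically checking that the single hypothesis $j\le e^{-2\beta t-6}\ell$ simultaneously triggers Proposition~\ref{prop.exp-bound} in both regimes: in the $k\ge k_\ast$ regime it only has to supply $k_\ast\le\ell$, but in the $k<k_\ast$ regime $j$ must dominate $c_1(\ell-k)\sim c_0c_1\ell\sim e^{-2\beta t}\ell$. The saving point, which needs careful bookkeeping, is that splitting the time variable into $t_0$ and $t-t_0$ costs only the harmless factor $c_0c_1=\tfrac1{9e^2}e^{-2\beta t}$ rather than something like $e^{-4\beta t}$; once that is pinned down, the numerical constants $3,6,7$ close, and everything else is routine substitution into~\eqref{eq.ode-hierarchy-t0}.
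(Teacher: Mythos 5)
Your proposal is correct and follows essentially the same route as the paper: substitute~\eqref{eq.ode-hierarchy-t0}, apply Proposition~\ref{prop.exp-bound} directly to $I^\ell_j(t-t_0)$, and for the cross-term sum split the index range into a regime where $I^{\ell-k}_j(t-t_0)$ decays and one where $I^k_{j+\ell-k}(t_0)$ decays (bounding the other factor by $1$), then absorb the resulting factor of $\ell$ into the prefactor $e^{2\beta t+7}$. The paper's split point is $\lfloor(1-\delta)\ell\rfloor$ with $\delta=\tfrac1{12}e^{-2\beta t_0-1}$ rather than your $\lceil(\ell+j)/(1+c_0)\rceil$, but this is only a cosmetic difference in bookkeeping and your constants do close.
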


\begin{proof}
    By~\eqref{eq.ode-hierarchy-t0}, it suffices to bound
    \[I^\ell_j(t-t_0) \leq \exp(-e^{-2\beta (t-t_0)-3} \ell)\]
    and
    \[ \sum_{k=1}^\ell  I^k_{j+\ell -k}(t_0) I^{\ell-k}_j(t-t_0) \leq e^{2\beta t+7} \exp(-e^{-2\beta t -7} \ell).\]
    The first bound is direct from Proposition~\ref{prop.exp-bound} and the condition~\eqref{eq.j-cond} on $j$. For the second, we let
     \[\delta :=  \tfrac{1}{12} e^{-2\beta t_0 -1}\]
     and note that by~\eqref{eq.j-cond},
      \[j \leq \tfrac{1}{3}\delta e^{-2\beta(t-t_0)-1}\ell.\]
    Then we have that
    \begin{align*}
         \sum_{k=1}^\ell I^k_{j+\ell -k}(t_0) I^{\ell-k}_j(t-t_0) &=  \sum_{k=1}^{\lfloor(1-\delta) \ell\rfloor}  I^k_{j+\ell -k}(t_0) I^{\ell-k}_j(t-t_0) +  \sum_{k=\lfloor (1-\delta)\ell \rfloor+1}^\ell I^k_{j+\ell -k}(t_0) I^{\ell-k}_j(t-t_0).
    \end{align*}
    Then, since for $k \in \{1,\dotsc,\lfloor(1-\delta)\ell\rfloor\},$ $\ell -k \geq \delta \ell$ and 
    \[j \leq \tfrac{1}{3} e^{-2\beta (t-t_0)-1} \delta \ell = \tfrac{1}{36} e^{-2\beta t -2} \ell,\] we have from Proposition~\ref{prop.exp-bound}, using that $I^k_{j+\ell-k}(t_0)\leq 1,$
     \begin{align}
     \sum_{k=1}^{\lfloor(1-\delta)\ell\rfloor}  I^k_{j+\ell -k}(t_0) I^{\ell-k}_j(t-t_0) & \leq  \sum_{k=1}^{\lfloor(1-\delta)\ell\rfloor}  I^{\ell-k}_j(t-t_0) 
     \notag\\&\leq \ell \exp(-\tfrac{1}{3} e^{-2\beta (t-t_0)-1} (\ell-k)) 
     \notag\\&\leq \ell \exp(-\tfrac{1}{3}\delta e^{-2\beta (t-t_0)-1} \ell) = \ell \exp(-\tfrac{1}{36} e^{-2\beta t -2} \ell).
     \label{eq.l2-iter-ode-t1}
     \end{align}
    Then, for $k \in \{\lfloor (1-\delta)\ell \rfloor+1, \dotsc, \ell\},$
     \[j + \ell - k \leq j + \delta \ell \leq \tfrac{1}{6} e^{-2\beta t_0 -1}\ell \leq \tfrac{1}{3} e^{-2\beta t_0 -1}k ,\]
    using the definition of $\delta$ and that~\eqref{eq.j-cond} implies
    \[j \leq \tfrac{1}{12} e^{-2\beta t_0 -1}\ell.\]
    Thus Proposition~\ref{prop.exp-bound} gives that
    \begin{align}  
    \sum_{k=\lceil (1-\delta) \ell \rceil}^{\ell}  I^k_{j+\ell -k}(t_0) I^{\ell-k}_j(t-t_0)  &\leq \sum_{k=\lceil (1-\delta) \ell \rceil}^{\ell} I^k_{j+\ell -k}(t_0) 
    \notag\\&\leq \ell \exp(-\tfrac{1}{3} e^{-2\beta t_0 -1 } (\lfloor (1-\delta)\ell \rfloor+1)) \leq  \ell \exp(-\tfrac{1}{6} e^{-2\beta t_0 -1 } \ell).
    \label{eq.l2-iter-ode-t2}  
    \end{align}
    Combining~\eqref{eq.l2-iter-ode-t1} and~\eqref{eq.l2-iter-ode-t2}, we get
    \begin{align*}
    \sum_{k=1}^\ell I^k_{j+\ell -k}(t_0) I^{\ell-k}_j(t-t_0) &\leq 2 \ell \exp(-\tfrac{1}{36} e^{-2\beta t -2} \ell) 
    \\&\leq 128 e^{2\beta t+2} \exp(-\tfrac{1}{72} e^{-2\beta t -2} \ell) \leq e^{2\beta t +7}   \exp(- e^{-2\beta t -7}\ell),
    \end{align*}
    allowing us to conclude.
\end{proof}

\subsection{Proof of a special case of Theorem~\ref{thm.main-result-intro}}
\label{ss.proof-of-main-result}

With the bounds given by Proposition~\ref{prop.ode-hierarchy-t0-iter} in hand, we are now ready to prove Theorem~\ref{thm.main-result-intro} in the restricted setting of this section. The heart of the proof is captured in the following lemma, which we will iterate to get the full result.

\begin{lemma}
    \label{lem.l2-iter}
    Suppose $f +f^{-1}\in L^\infty(\T^d)$ and $K \in L^\infty_\delta(\T^{2d})$. Then there exists $C(\|K\|_{L^\infty_\delta},i)< \infty$ such that for any $t_0 \geq 0$, $L >0$ with $LN^{2/3} \leq N$ and for any $M \geq 1$ with
    \[\sup_{s \in [0,t_0]}\int \Big|\frac{\phi^i_{L N^{2/3}} - f_{L N^{2/3}}}{\rho^{\otimes {L N^{2/3}}}}\Big|^2 \rho^{\otimes {L N^{2/3}}}\,dx \leq M,\]
    then for $\delta>0$ defined to be 
    \[\delta e^{2\beta \delta}:= \frac{1}{48e^3 \|K\|_{L^\infty_\delta}^2}  \land 1,\]
    we have for all $j \in \N$ with
    \[j \leq Le^{-2\beta (t_0+1)-7} N^{2/3},\]
    and for all  $t_0 \leq t \leq t+ \delta,$ the bound
    \[ \int \Big|\frac{\phi^i_j - f_j}{\rho^{\otimes j}}\Big|^2 \rho^{\otimes j}\,dx \leq Ce^{Ct_0 + L^3} \Big( \Big(\frac{j}{N}\Big)^{2(i+1)} + \frac{M}{N^{2(i+1)}} (L^2N)^{-8i-1}\Big).\]
\end{lemma}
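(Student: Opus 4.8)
\emph{Plan.} Fix $i$ and, as in Proposition~\ref{prop.l2-hierarchy}, set $\gamma_j := \phi^i_j - f_j$ and
\[x_j(t) := \int \Big|\tfrac{\gamma_j}{\rho^{\otimes j}}\Big|^2 \rho^{\otimes j}\,dx, \qquad r_j := 2\int \Big|\tfrac{R^i_j}{\rho^{\otimes j}}\Big|^2 \rho^{\otimes j}\,dx .\]
Then $x_j \ge 0$, $x_j(0)=0$ (since $\phi^i_j(0,\cdot)=f^{\otimes j}=f_j(0,\cdot)$), and Proposition~\ref{prop.l2-hierarchy} gives $\dot x_j \le \tfrac{\beta}{2}j(x_{j+1}-x_j) + \beta\tfrac{j^3}{N^2}x_j + r_j$ with $\beta = 4\|K\|_{L^\infty}^2$. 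On the range $j \le LN^{2/3}$ one has $\tfrac{j^2}{N^2}\le L^2N^{-2/3}$, so a harmless rescaling of time by a factor between $1$ and $4$ (which simultaneously restores the coefficient $\beta k$ and absorbs $\beta\tfrac{j^3}{N^2}x_j$ into the $-x_k$-decay) brings the hierarchy to the form $\dot x_k \le \beta k(\alpha_k x_{k+1}-x_k)+\tilde r_k$ required by Proposition~\ref{prop.ode-hierarchy-t0-iter}, with $\tilde r_k \le 4r_k$ and $\alpha_k = 1+O(L^2N^{-2/3})$, hence $A^\ell_j = \prod_{m=j}^{j+\ell-1}\alpha_m \le e^{CL^3}$ whenever $j+\ell \le LN^{2/3}$ (for the finitely many $N$ with $N^{2/3}\lesssim L^2$ every quantity is bounded in terms of $L,i,\|K\|_{L^\infty}$ and absorbed into $Ce^{L^3}$). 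I then apply Proposition~\ref{prop.ode-hierarchy-t0-iter} with $\ell := LN^{2/3}-j$ and bound the three resulting terms. Since $\delta e^{2\beta\delta}\le1$ forces $\delta\le1$, the hypothesis $j \le Le^{-2\beta(t_0+1)-7}N^{2/3}$ together with $t\le t_0+\delta\le t_0+1$ and $\ell \ge \tfrac12 LN^{2/3}$ yields $j \le e^{-2\beta t-6}\ell$, i.e.\ condition~\eqref{eq.j-cond}.

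The third (remainder) term $\tfrac1\beta\sum_{k=0}^{\ell-1}A^{k+1}_j I^{k+1}_j(t)\tfrac{\tilde r_{j+k}}{\alpha_{j+k}(j+k)}$ is the main contribution and is handled with Propositions~\ref{prop.r-ij-bound} and~\ref{prop.polynomial-bound}: $\tilde r_{j+k}\le Ce^{Ct}(\tfrac{j+k}{N})^{2(i+1)}$, and with the choice $b=2i+3$ one has $I^{k+1}_j(t) \le \big(\tfrac{j+2i+3}{j+k+1}\big)^{2i+3}e^{(2i+3)\beta t}$; since $\alpha_{j+k}\ge1$ and $A^{k+1}_j\le e^{CL^3}$, each summand is $\le Ce^{Ct+CL^3}(j+2i+3)^{2i+3}(j+k)^{-2}N^{-2(i+1)}$, and $\sum_{k\ge0}(j+k)^{-2}\le 2/j$ together with $(j+2i+3)^{2i+3}\le C_i\,j^{2i+3}$ bounds this term by $Ce^{Ct_0+CL^3}(\tfrac jN)^{2(i+1)}$ (absorbing $t\le t_0+\delta$). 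This is exactly the first term of the claimed bound.

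The first two terms of Proposition~\ref{prop.ode-hierarchy-t0-iter}, namely $A^\ell_j e^{-e^{-2\beta(t-t_0)-3}\ell}\sup_{[t_0,t]}x_{LN^{2/3}}$ and $A^\ell_j e^{2\beta t+7}e^{-e^{-2\beta t-7}\ell}\sup_{[0,t_0]}x_{LN^{2/3}}$, are the crux. For the second, $\sup_{[0,t_0]}x_{LN^{2/3}}\le M$ by hypothesis. For the first, write $x_k \le 2\int|\phi^i_k/\rho^{\otimes k}|^2\rho^{\otimes k}\,dx + 2\int|f_k/\rho^{\otimes k}|^2\rho^{\otimes k}\,dx$: the first piece is $\le Ce^{Ct}$ by Proposition~\ref{prop.f-ij-bound}, and the brutal bound~\eqref{eq.l2-apriori} propagates the second from its value at $t_0$ (itself $\le 2M+2Ce^{Ct_0}$, again by Proposition~\ref{prop.f-ij-bound}) over $[t_0,t_0+\delta]$ at rate $e^{3\beta LN^{2/3}(t-t_0)}$, so $\sup_{[t_0,t]}x_{LN^{2/3}} \le C(M+e^{Ct_0})e^{3\beta LN^{2/3}\delta}$. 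Using $\ell\ge\tfrac12 LN^{2/3}$ and $t-t_0\le\delta$, the first term is thus $\le Ce^{CL^3}(M+e^{Ct_0})\exp\!\big(LN^{2/3}(3\beta\delta - \tfrac12 e^{-2\beta\delta-3})\big)$. The value $\delta e^{2\beta\delta} = \tfrac{1}{48e^3\|K\|_{L^\infty}^2}\land1$ is chosen precisely so that $3\beta\delta\,e^{2\beta\delta}\le\tfrac14 e^{-3} < \tfrac12 e^{-3}$, making the exponent $\le -c_1 LN^{2/3}$ for an absolute $c_1(\|K\|_{L^\infty})>0$; likewise the second term is $\le Me^{Ct_0+CL^3}\exp(-\tfrac12 e^{-2\beta(t_0+\delta)-7}LN^{2/3})$. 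Converting these super-exponentially small factors into polynomial ones via $e^{-x}\le q!\,x^{-q}$ with $q=q(i)$ chosen large enough, and absorbing the resulting powers of $c_1^{-1}$ and $e^{2\beta(t_0+\delta)+7}$ into the $e^{Ct_0}$ prefactor, bounds the first two terms by $Ce^{Ct_0+CL^3}\big((\tfrac jN)^{2(i+1)} + \tfrac{M}{N^{2(i+1)}}(L^2N)^{-8i-1}\big)$. Combining with the estimate on the third term completes the proof.

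\emph{Main obstacle.} The delicate step is the first two terms above. Unlike the relative-entropy hierarchy of~\cite{lacker_hierarchies_2023}, where a cheap a priori bound growing linearly in $k$ is available for the analogue of $x_k$, here the only unconditional control on $x_{LN^{2/3}}$ is~\eqref{eq.l2-apriori}, which over any fixed time interval is \emph{exponentially large} in $LN^{2/3}$. The only mechanism that defeats this is the exponential-in-$\ell$ decay of $I^\ell_j$ supplied by Proposition~\ref{prop.exp-bound} with $\ell = LN^{2/3}-j\approx LN^{2/3}$, and for its rate to beat the a priori growth rate one is forced onto a time window of length at most the $\delta$ above. This is exactly why the lemma is only a short-time statement anchored at an arbitrary $t_0$: it is the unit to be chained in Subsection~\ref{ss.proof-of-main-result}, where the compounding of the factors $e^{-2\beta(t_0+1)-7}$ over $\sim t/\delta$ iterations produces the Gaussian-in-time restriction $j\le C^{-1}e^{-Ct^2}N^{2/3}$ of Theorem~\ref{thm.main-result-intro}.
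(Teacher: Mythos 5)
Your proposal follows essentially the same route as the paper's proof: the same reduction to the ODE hierarchy and application of Proposition~\ref{prop.ode-hierarchy-t0-iter} with $\ell = LN^{2/3}-j$, the same bound $A^\ell_j\le e^{L^3}$, the same choice $b=2i+3$ in Proposition~\ref{prop.polynomial-bound} to sum the remainder term to $Ce^{Ct_0+L^3}(j/N)^{2(i+1)}$, the same use of the a priori bound~\eqref{eq.l2-apriori} together with Proposition~\ref{prop.f-ij-bound} to control $\sup_{[t_0,t]}x_{LN^{2/3}}$ by $C(M+e^{Ct_0})e^{12\|K\|_{L^\infty}^2LN^{2/3}\delta}$, the same role of $\delta$ in making the exponent $3\beta\delta-\tfrac12e^{-2\beta\delta-3}$ negative, and the same conversion of the resulting $\exp(-cLN^{2/3})$ factors into the polynomial $(L^2N)^{-8i-1}N^{-2(i+1)}$.

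The one step that does not hold up as written is the ``harmless rescaling of time.'' A substitution $s\mapsto\lambda s$ multiplies \emph{every} term on the right-hand side of the differential inequality by the same factor $\lambda$; it cannot selectively promote the drift coefficient from $2\|K\|_{L^\infty}^2k$ to $\beta k$ while simultaneously absorbing $\beta k^3N^{-2}x_k$, and it would also shift the time argument appearing in $I^\ell_j$ and in the hypothesis/conclusion of the lemma. The correct and short repair—which is what the paper tacitly does when it writes $\dot x_k\le\beta k(\alpha_kx_{k+1}-x_k)+r_k$ with $\alpha_k=1+k^2/N^2$—is the monotonicity $x_k\le x_{k+1}$, which follows from Jensen's inequality and the marginalization $\int\gamma^i_{k+1}\,dx_{k+1}=\gamma^i_k$ (a consequence of Proposition~\ref{prop.marginalization} and the marginal consistency of the $f_{j,N}$): with $x_{k+1}\ge x_k$ one has $2\|K\|_{L^\infty}^2k(x_{k+1}-x_k)\le\beta k(x_{k+1}-x_k)$ and $\beta k^3N^{-2}x_k\le\beta k(\alpha_k-1)x_{k+1}$, which together give exactly the canonical form with the fixed $\beta$ of Definition~\ref{def.I-lj-def}. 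With that substitution for your rescaling step, the rest of your argument goes through and matches the paper's.
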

\begin{proof}
    We let
    \[x_k :=  \int \Big|\frac{\phi^i_k - f_k}{\rho^{\otimes k}}\Big|^2 \rho^{\otimes k}\,dx,\]
    so by~\eqref{eq.l2-hierarchy}, we have that
    \[\dot x_k \leq 4 \|K\|_{L^\infty_\delta}^2k (x_{k+1} - x_k) + 4\|K\|_{L^\infty_\delta}^2 \frac{k^3}{N^2} x_k + r_k \leq \beta k (\alpha_k x_{k+1} - x_k) + r_k,\]
    where
    \[\alpha_k := 1 + \frac{k^2}{N^2},\quad A^\ell_k := \prod_{i=k}^{k+\ell-1} \alpha_i, \quad r_k := 2 \int \Big| \frac{r^i_k}{\rho^{\otimes k}}\Big|^2 \rho^{\otimes k}\,dx.\]
    Then we note that
    \[\log A^{\ell}_k = \sum_{i=k}^{k+\ell-1} \log\Big(1 + \frac{i^2}{N^2}\Big) \leq \frac{1}{N^2}\sum_{i=k}^{k+\ell -1} i^2 \leq \frac{1}{N^2} \int_{k+1}^{k+\ell} x^2\,dx \leq \frac{(k+\ell)^3}{N^2}.\]
    Thus, for $k+\ell \leq L N^{2/3},$
    \begin{equation}
    \label{eq.A-bound}
    A^\ell_k \leq \exp\Big(\frac{L^3 N^2}{N^2}\Big) = e^{L^3}.
    \end{equation}
    By Proposition~\ref{prop.r-ij-bound}, for all $t \in [0,t_0+\delta],$ we can bound
    \begin{equation}
    \label{eq.nu-bound}
    r_k(t) \leq Ce^{Ct}\Big(\frac{k}{N}\Big)^{2(i+1)} \leq C e^{C t_0}\Big(\frac{k}{N}\Big)^{2(i+1)}.
    \end{equation}
    Then, for any 
     \[j \leq Le^{-2\beta t_0-7} N^{2/3},\]
     letting
     \[\ell := LN^{2/3} -j \geq \tfrac{1}{2} LN^{2/3},\]
     we have that
     \[j \leq e^{-2\beta (t_0+1) - 6} \ell,\]
    so for any $t \in [0,\delta],$
    we have by Proposition~\ref{prop.ode-hierarchy-t0-iter},~\eqref{eq.A-bound}, and~\eqref{eq.nu-bound} that $x_j(t)$ is bounded by
    \begin{align}
            &e^{L^3} \exp(- e^{-2\beta (t-t_0)-3} \ell) \sup_{s \in [t_0,t]} x_{LN^{2/3}}(s) + e^{L^3} e^{2\beta (t_0+1) +7}   \exp(- e^{-2\beta (t_0+1) -7}\ell)\sup_{s \in [0,t_0]}x_{LN^{2/3}}(s) 
            \notag\\&\qquad\qquad+ Ce^{Ct_0}\frac{e^{L^3}}{\beta N^{2(i+1)}}\sum_{k=0}^{\ell-1} I_j^{k+1}(t) (j+k)^{2i +1}.
            \label{eq.l2-iter-lem-terms}
    \end{align}
    We note that by Proposition~\ref{prop.polynomial-bound},
    \[\sum_{k=0}^{\ell-1} I_j^{k+1}(t) (j+k)^{2i +1} \leq e^{\beta (2i+3) t}\sum_{k=0}^{\ell-1} (j+2i+3)^{2i+3} (j+k)^{-2} \leq Ce^{Ct_0} j^{2i+3} \int_{x=j}^\infty x^{-2} \leq Ce^{Ct_0} j^{2(i+1)}.\]
    Thus
    \begin{equation}
    \label{eq.l2-iter-lem-t1}
    Ce^{Ct_0}\frac{e^{L^3}}{\beta N^{2(i+1)}}\sum_{k=0}^{\ell-1} I_j^{k+1}(t) (j+k)^{2i +1} \leq Ce^{Ct_0 + L^3} \Big(\frac{j}{N}\Big)^{2(i+1)}.
    \end{equation}
    Then we note that
    \begin{align*}
    \exp(- e^{-2\beta (t_0+1) -7}\ell)\sup_{s \in [0,t_0]}x_{LN^{2/3}}(s) &\leq M \exp(-e^{-2\beta (t_0+1) -8} LN^{2/3}) 
    \\&\leq \frac{M((3(i+1)+12i+3/2) e^{2\beta (t_0+1) +8} L^{-1})^{3(i+1)+12i+3/2}}{(N^{2/3})^{3(i+1)+12i+3/2}}  
    \\&\leq \frac{Ce^{Ct_0}M}{N^{2(i+1)}} (L^2N)^{-8i-1}, 
    \end{align*}
    where we use that
    \[x^m e^{-ax} \leq \Big(\frac{m}{a}\Big)^me^{-m}.\]
    Thus
    \begin{equation}
        \label{eq.l2-iter-lem-t2}
         e^{L^3} e^{2\beta (t_0+1) +7}   \exp(- e^{-2\beta (t_0+1) -7}\ell)\sup_{s \in [0,t_0]}x_{LN^{2/3}}(s) \leq C e^{Ct_0 + L^3} \frac{M}{N^{2(i+1)}}(L^2N)^{-8i-1}.
    \end{equation}
    For the last term in~\eqref{eq.l2-iter-lem-terms}, we need to control $x_{LN^{2/3}}(t)$ for $t \in [t_0,t_0+\delta]$. Let
    \[y_{LN^{2/3}}(t) := \int \Big|\frac{f_{L N^{2/3}}}{\rho^{\otimes {L N^{2/3}}}}\Big|^2 \rho^{\otimes {L N^{2/3}}}\,dx.\]
    We note that by the triangle inequality
    \[y_{LN^{2/3}}(t_0) \leq 2x_{LN^{2/3}}(t_0) + 2\int \Big|\frac{\phi^i_{L N^{2/3}}}{\rho^{\otimes {L N^{2/3}}}}\Big|^2(t_0) \rho^{\otimes {L N^{2/3}}}(t_0)\,dx \leq Ce^{Ct_0}M,\]
    where we use that $M \geq 1$ and Proposition~\ref{prop.f-ij-bound} to bound the term involving $\phi^i_j$. Then we note that~\eqref{eq.l2-apriori} gives that 
    \[\dot y_{LN^{2/3}} \leq 12\|K\|_{L^\infty_\delta}^2 L N^{2/3} y_{LN^{2/3}},\]
    thus, for $t_0 \leq t \leq t_0 +\delta $
    \[y_{LN^{2/3}}(t) \leq e^{12\|K\|_{L^\infty_\delta}^2 L N^{2/3} (t -t_0)} y_{LN^{2/3}}(t_0) \leq Ce^{Ct_0}M  e^{12\|K\|_{L^\infty_\delta}^2 L N^{2/3} \delta}.\]
    So
    \[x_{LN^{2/3}}(t) \leq 2 y_{LN^{2/3}}(t) + 2\int \Big|\frac{\phi^i_{L N^{2/3}}}{\rho^{\otimes {L N^{2/3}}}}\Big|^2\rho^{\otimes {L N^{2/3}}}\,dx \leq Ce^{Ct_0}M  e^{12\|K\|_{L^\infty_\delta}^2 L N^{2/3} \delta}.\]
    Note that $\ell \geq j$ and $j +\ell = LN^{2/3}$, so
    \[\ell \geq \tfrac{1}{2} LN^{2/3}.\]    
    Thus for $t_0 \leq t \leq t_0+\delta,$
    \begin{align*}
        \exp(- e^{-2\beta (t-t_0)-3} \ell) \sup_{s \in [t_0,t]} x_{LN^{2/3}}(s) &\leq Ce^{Ct_0}M \exp((12\|K\|_{L^\infty_\delta}^2\delta   -  \tfrac{1}{2}e^{-2\beta \delta-3}) LN^{2/3}) 
        \\&\leq Ce^{Ct_0}M \exp(-\tfrac{1}{4} e^{-2\beta \delta -3} LN^{2/3}),
    \end{align*}
    where we use that by the definition of $\delta$,
    \[12\|K\|_{L^\infty_\delta}^2\delta \leq \tfrac{1}{4}e^{-2\beta \delta-3}.\]
    Then
    \[\exp(-\tfrac{1}{4} e^{-2\beta \delta -3} LN^{2/3}) \leq \exp(- e^{-2\beta-5} LN^{2/3}) \leq \frac{C}{N^{2(i+1)}} (L^2N)^{-8i-1}.\]
    Thus,
    \begin{equation}
        \label{eq.l2-iter-lem-t3}
         e^{L^3} \exp(- e^{-2\beta (t-t_0)-3} \ell) \sup_{s \in [t_0,t]} x_{LN^{2/3}}(s) \leq Ce^{Ct_0 + L^3}\frac{M}{N^{2(i+1)}}(L^2N)^{-8i-1}.
    \end{equation}
    Then combining~\eqref{eq.l2-iter-lem-terms},~\eqref{eq.l2-iter-lem-t1},~\eqref{eq.l2-iter-lem-t2}, and~\eqref{eq.l2-iter-lem-t3}, we see that for any $j \leq Le^{-2\beta t_0-7} N^{2/3}$ and any $t_0 \leq t \leq t_0 + \delta,$
    \[x_j \leq  Ce^{Ct_0 + L^3} \Big( \Big(\frac{j}{N}\Big)^{2(i+1)} + \frac{M}{N^{2(i+1)}} (L^2N)^{-8i}\Big),\]
    as desired.
\end{proof}

We now prove the special case of Theorem~\ref{thm.main-result-intro} by iterating Lemma~\ref{lem.l2-iter}. The main difficulty is controlling the constants that appear in the iteration.

\begin{proposition}
    \label{prop.main-result-special}
    Suppose $f \in L^1(\T^d), K \in L^\infty_\delta(\T^{2d}).$ Suppose additionally $f + f^{-1} \in L^\infty(\T^d)$. Then for each $i \in \N,$ there exists $C(\|K\|_{L^\infty_\delta(\Omega^{2})},i)< \infty$ such that for any $N$ and any $j$ with
    \[j \leq C^{-1} e^{-Ct^2} N^{2/3},\]
    we have the bound
    \begin{equation*}
    \int \bigg|\frac{f_{j,N}- \sum_{k=0}^i N^{-k}f^k_j }{\rho^{\otimes j}}\bigg|^2 \rho^{\otimes j}\,dx \leq  Ce^{Ct} \Big(\frac{j}{N}\Big)^{2(i+1)}.
    \end{equation*}
\end{proposition}

\begin{proof}[Proof of Theorem~\ref{thm.main-result-intro} in the restricted setting]
    Fix $\delta$ as in Lemma~\ref{lem.l2-iter}, so that 
    \[\delta e^{2\beta \delta}:= \frac{1}{48e^3 \|K\|_{L^\infty_\delta}^2}  \land 1.\]
    Let $L_0 = 1$ and let
    \[L_{k+1} := \lfloor L_ke^{-2\beta \delta k -7 - 2\beta}N^{2/3}\rfloor N^{-2/3}.\]
    We then claim inductively that for 
    \[j \leq L_k N^{2/3};\quad 0 \lor (k-1)\delta \leq t \leq k\delta,\]
    we have the bound
    \[ \int \Big|\frac{\phi^i_j - f_j}{\rho^{\otimes j}}\Big|^2 \rho^{\otimes j}\,dx \leq  B_k \Big(\frac{j}{N}\Big)^{2(i+1)},\]
    where $B_0 = 1$ and 
    \[B_{k+1} =Ce^{C\delta k + L_k^3} (1+ B_k (L_k^2N)^{-8i-1}).\]
    We note the bound is trivially true for $k=0$. Then inductively, using Lemma~\ref{lem.l2-iter} with $t_0 = \delta k,$ $L = L_k$, and
    \[M= B_k,\]
    we get that for
    \[j \leq L_{k+1}N^{2/3} \leq L_k e^{-2\beta (k\delta+1) -7}N^{2/3},\]
    for $\delta k \leq t \leq \delta(k+1),$
    \[ \int \Big|\frac{\phi^i_j - f_j}{\rho^{\otimes j}}\Big|^2 \rho^{\otimes j}\,dx \leq Ce^{C\delta k + L_k^3} (1+ B_k (L_k^2N)^{-8i-1})\Big(\frac{j}{N}\Big)^{2(i+1)}\leq  B_{k+1} \Big(\frac{j}{N}\Big)^{2(i+1)},\]
    Thus the induction closes.
    
    Now we just need to control $B_k, L_k$. First note that
    \[L_{k} \leq L_{k-1}e^{-2\beta \delta (k-1)-7-2\beta} \leq L_{k-1} \leq \cdots \leq L_0 = 1,\]
    and also that
    \begin{align*}
    L_{k} &\geq L_{k-1}e^{-2\beta \delta (k-1)-7-2\beta} - N^{-2/3} 
    \\&\geq \cdots \geq \exp\Big({-}(7+2\beta)k -2\beta \delta \sum_{i=0}^{k-1} i\Big)L_0 - N^{-2/3}\sum_{\ell=0}^{k-1} \exp({-7} \ell) 
    \\&\geq \exp\Big({-}(7+2\beta)k -2\beta \delta \sum_{i=0}^{k-1} i\Big) -2N^{-2/3}.
    \end{align*}
    Recalling $\sum_{i=0}^{k-1} i = \tfrac{1}{2} (k^2 -k)$, we have
    \[L_k \geq \exp({-}(7+2\beta)k-\beta \delta (k^2 -k)) - 2 N^{-2/3} \geq \exp(-10 (1+\beta) k^2) - 2 N^{-2/3}.\]
    Thus, for $k \leq C^{-1}\sqrt{\log(N)} - C,$
    we have,
    \[2N^{-2/3} \leq \tfrac{1}{2}\exp(-10(1+\beta) k^2) \]
    so that
    \[L_k \geq \frac{1}{2} \exp(-10(1+\beta) k^2)\geq \exp(-11(1+\beta)k^2).\]
    Thus for $k \leq C^{-1}\sqrt{\log(N)} -C,$
    \[\frac{1}{L_k^2N} \leq \frac{1}{N} \exp(22(1+\beta)k^2) \leq N^{-1/2},\]
    which implies that
    \[
    B_{k+1} \leq Ce^{C k}(1+ B_k (L_k^2N)^{-8i-1})\leq  Ce^{C k}+\frac{Ce^{Ck}}{N^{1/2}}B_k \leq C(e^{Ck} + B_k) .\]
    Iterating this bound then gives
    \[B_k \leq Ce^{C k}.\]
    Therefore, for all $k \leq C^{-1} \sqrt{\log(N)} -C$, if 
    \[j \leq \exp(-11(1+\beta)k^2)L \leq L_k,\]
    and $0\leq t \leq k \delta$,
     \[ \int \Big|\frac{\phi^i_j - f_j}{\rho^{\otimes j}}\Big|^2 \rho^{\otimes j}\,dx \leq  Ce^{C k} \Big(\frac{j}{N}\Big)^{2(i+1)}.\]
     Choosing the optimal $k$, we get that for any $t \leq  C^{-1} \sqrt{\log(N)}-C,$ if $j \leq C^{-1}\exp(-C t^2)N^{2/3}$, we get the bound
    \[ \int \Big|\frac{\phi^i_j - f_j}{\rho^{\otimes j}}\Big|^2 \rho^{\otimes j}\,dx \leq  Ce^{Ct} \Big(\frac{j}{N}\Big)^{2(i+1)}.\]
    This is almost precisely the result, except with an additional restriction on $t$. We note though that the $t$ bound is superfluous as by choosing $C$ large enough, if $t \geq  C_0\sqrt{\log(N)}-C_0,$ then
    \[C^{-1}\exp(-C t^2)N^{2/3} <1,\]
    so the result holds vacuously in this case. Thus we can remove the $t$ restriction and conclude.
\end{proof}
\begin{note}
    We note by choosing the starting point of the induction $L_0$ to be larger, one can slightly expand the range of $j$ for which one can prove the bound. This adds complication without being of any particular interest, so we omit this argument.
\end{note}

\subsection{Proofs of $g^i_j$ marginalization}\label{ss.marginalization}

We wish to show that the $g^i_j$ have the same marginalization properties as the $g_j$. This is shown by inductively using the Gr\"onwall inequality, where the induction is done in the ordering on $D$.

\begin{proof}[Proof of Lemma~\ref{lem.marginalization}]
We will show this inductively using the order given in Definition \ref{def.order}.
The base case holds trivially since $g_1^0=\rho$ is a probability density.

Fixing $(i,j)$ such that $(i,j)\geq (0,1)$, suppose now that the marginalization holds for all $g^k_\ell$ with $(k,\ell)<(i,j).$
We define
\[\psi(x_1,...,x_{j-1}):= \int g^i_j(x)\, dx_j.
\]
Then integrating the equation \eqref{eq.g-ij-PDE} over $x_j$ we get

\begin{align}
\notag\lefteqn{\partial_t \psi - \Delta \psi+\sum_{k=1}^{j-1} H_k g^0_{\{k\}}  \psi(x^{[j-1]-\{k\}\cup\{*\}})+\sum_{k=1}^{j-1} H_k \psi(x)  g^{0}_{\{*\}}}\quad&
\\&\label{line.1}=-\sum_{k=1}^{j-1} H_k \int g^i_{[j]\cup\{*\}} \,dx_j-\sum_{k=1}^{j-1} \sum_{W \subseteq [j] -\{k\}} \sum_{m=1}^{i-1}H_k \int g^m_{W \cup \{k\}}  g^{i-m}_{[j]\cup \{*\} - W -\{k\}} \,dx_j
\\&\label{line.2}\quad +j \sum_{k=1}^{j-1} H_k \int g^{i-1}_{[j]\cup\{*\}}\,dx_j+\sum_{k=1}^{j-1} \sum_{W \subseteq [j] - \{k\}} (j-1 - |W|) \sum_{m=0}^{i-1} H_k \int g^m_{W \cup \{k,*\}} g^{i-1-m}_{[j] - \{k\} - W}\,dx_j
 \\&\label{line.3}\quad +j\sum_{k=1}^{j-1} \sum_{W \subseteq [j] -\{k\}} \sum_{m=0}^{i-1}H_k \int g^m_{W \cup \{k\}} g^{i-1-m}_{[j]\cup \{*\} - W -\{k\}}\,dx_j
        \\&\notag\quad  +\sum_{k=1}^{j-1} \sum_{\substack{W \subseteq [j] - \{k\}\\R \subseteq [j] -\{k\} - W}} (j -1 - |W| - |R|) \sum_{m=0}^{i-1} \sum_{n=0}^{i-1-m} H_k \int g^m_{W \cup \{k\}}  g^n_{R \cup \{*\}} g^{i-1-m-n}_{[j] - R - W - \{k\}}\,dx_j
        \\&\label{line.5}\quad - \sum_{k=1}^{j-1}\sum_{\ell=1}^j \int S_{k,\ell} g^{i-1}_j \,dx_j- \sum_{k=1}^{j-1}\sum_{\substack{\ell=1\\k\neq \ell}}^j \sum_{W \subseteq [j] - \{k,\ell\}} \sum_{m=0}^{i-1} \int S_{k,\ell} g^m_{W \cup \{k\}} g^{i-1-m}_{[j] - \{k\} - W}\,dx_j.
\end{align}
where we've used that
\[\int \nabla_{x_j} \cdot h\,dx_j=0\]
for any function $h$.
Both sums on line \eqref{line.1} are equal to zero by the induction hypothesis as all the superscripts are larger than 1. The induction assumption also implies that the first sum on line \eqref{line.2} equals 0 when $i\geq 2$ as then the superscript $i-1\geq 1$. When $i=1$, it also equals 0, but instead because $g^{0}_{[j]\cup\{*\}}=0$ as $|[j]\cup\{*\}|\geq 2$. 

The second sum on line \eqref{line.2} will be shown later to cancel with the first sum on line \eqref{line.5}, so we skip it for now.

For line \eqref{line.3}, we note that all terms in the sum corresponding to $0<m<i-1$ equal zero by the induction hypothesis. We are thus left with
\begin{align*}
j\sum_{k=1}^{j-1} \sum_{W \subseteq [j] -\{k\}} H_k \int g^0_{W \cup \{k\}} g^{i-1}_{[j]\cup \{*\} - W -\{k\}}\,dx_j+H_k \int g^{i-1}_{W \cup \{k\}} g^{0}_{[j]\cup \{*\} - W -\{k\}}\,dx_j.
\end{align*}
The terms in this sum can be broken into two cases, either $j\in W$ or $j\notin W$. If $j\in W$ then $|W\cup\{k\}|\geq 2$, thus $g^0_{W\cup\{k\}}=0$ and
\[\int g^{i-1}_{W\cup \{k\}} \,dx_j=0,\]
thus all these terms equal to 0. When $j\notin W$, then $|[j]\cup\{*\}-W-\{k\}|\geq 2$, thus by an analogous argument all the corresponding terms equal zero as well. This shows that line \eqref{line.3} equals zero as well.

We have so far simplified the entire equation to
\begin{align}
\notag\lefteqn{\partial_t \psi - \Delta \psi+\sum_{k=1}^{j-1} H_k g^0_{\{k\}}  \psi(x^{[j-1]-\{k\}\cup\{*\}})+\sum_{k=1}^{j-1} H_k \psi(x)  g^{0}_{\{*\}}}\quad&
\\&\label{line.21}=\sum_{k=1}^{j-1} \sum_{W \subseteq [j] - \{k\}} (j-1 - |W|) \sum_{m=0}^{i-1} H_k \int g^m_{W \cup \{k,*\}} g^{i-1-m}_{[j] - \{k\} - W}\,dx_j
        \\&\label{line.22}\quad  +\sum_{k=1}^{j-1} \sum_{\substack{W \subseteq [j] - \{k\}\\R \subseteq [j] -\{k\} - W}}  (j -1 - |W| - |R|) \sum_{m=0}^{i-1} \sum_{n=0}^{i-1-m} H_k \int g^m_{W \cup \{k\}}  g^n_{R \cup \{*\}} g^{i-1-m-n}_{[j] - R - W - \{k\}}\,dx_j
        \\&\label{line.23}\quad - \sum_{k=1}^{j-1}\sum_{\ell=1}^j \int S_{k,\ell} g^{i-1}_j \,dx_j
        \\&\label{line.24}\quad - \sum_{k=1}^{j-1}\sum_{\substack{\ell=1\\k\neq \ell}}^j \sum_{W \subseteq [j] - \{k,\ell\}} \sum_{m=0}^{i-1} \int S_{k,\ell} g^m_{W \cup \{k\}} g^{i-1-m}_{[j] - \{k\} - W}\,dx_j.
\end{align}
First we claim that the sum \eqref{line.21} can be reduced to
\[
\sum_{k=1}^{j-1}   H_k g^{i-1}_{[j-1] \cup \{*\}}.
\]
This is clear when $j=1.$ Using the induction hypothesis when $0<m<i-1$ we reduce \eqref{line.21} to
\[
\sum_{k=1}^{j-1} \sum_{W \subseteq [j] - \{k\}} (j-1 - |W|)\Bigg( H_k \int g^0_{W \cup \{k,*\}} g^{i-1}_{[j] - \{k\} - W}\,dx_j+H_k \int g^{i-1}_{W \cup \{k,*\}} g^0_{[j] - \{k\} - W}\,dx_j\Bigg)
\]
Since $|W\cup\{k,*\}|\geq 2$, $g^{i-1}_{W \cup \{k,*\}}=0$ hence this further reduces to 
\[
\sum_{k=1}^{j-1} \sum_{W \subseteq [j] - \{k\}} (j-1 - |W|)H_k \int g^{i-1}_{W \cup \{k,*\}} g^0_{[j] - \{k\} - W}\,dx_j.
\]
The integral
\[
\int g^{i-1}_{W \cup \{k,*\}} g^0_{[j] - \{k\} - W}\,dx_j=0
\]
unless $W=[j-1]-\{k\}$ hence 
\begin{align*}
    \sum_{k=1}^{j-1} \sum_{W \subseteq [j] - \{k\}} (j-1 - |W|)H_k \int g^{i-1}_{W \cup \{k,*\}} g^0_{[j] - \{k\} - W}\,dx_j
&=\sum_{k=1}^{j-1} H_k \int g^{i-1}_{[j-1]\cup\{*\}} g^0_{\{j\}}\,dx_j
\\&=\sum_{k=1}^{j-1}H_k g^{i-1}_{[j-1]\cup\{*\}},
\end{align*}
as claimed.
This cancels exactly with \eqref{line.23} 
since if $\ell\neq j$ then
\[\int S_{k,\ell} g^{i-1}_j \,dx_j=0\]
while when $\ell=j$ exchangeability implies
\[\int S_{k,j} g^{i-1}_j \,dx_j=H_k g^{i-1}_{[j-1]\cup\{*\}}.\]

Similarly, we can reduce \eqref{line.22} to
\[
\sum_{k=1}^{j-1} \sum_{W \subseteq [j-1] - \{k\}}  \sum_{m=0}^{i-1} H_k g^m_{W \cup \{k\}}  g^{i-1-m}_{ [j-1] -\{k\} - W \cup \{*\}}.\]
Indeed, if $j\in W$ or $j\in R$ then either
\[
\int g^m_{W \cup \{k\}} \,dx_j=0\ \text{or}\ \int g^n_{R \cup \{*\}} \,dx_j=0,
\]
respectively. When $j\notin R\cup  W$ the integral
\[
\int g^{i-1-m-n}_{[j] - R - W - \{k\}} \,dx_j=0
\]
unless $i-1-m-n=0$ and $R\cup W=[j-1]-\{k\}$. Thus
\begin{align}
&\sum_{k=1}^{j-1} \sum_{W \subseteq [j] - \{k\}} \sum_{R \subseteq [j] -\{k\} - W} (j -1 - |W| - |R|) \sum_{m=0}^{i-1} \sum_{n=0}^{i-1-m} H_k \int g^m_{W \cup \{k\}}  g^n_{R \cup \{*\}} g^{i-1-m-n}_{[j] - R - W - \{k\}}\,dx_j
\notag\\&\qquad=\sum_{k=1}^{j-1} \sum_{W \subseteq [j-1] - \{k\}} (j -1 - (j-2)) \sum_{m=0}^{i-1} H_k \int g^m_{W \cup \{k\}}  g^{i-1-m}_{[j-1]-W-\{k\} \cup \{*\}} g^{0}_{\{j\}}\,dx_j
\notag\\&\qquad=\sum_{k=1}^{j-1} \sum_{W \subseteq [j-1] - \{k\}} \sum_{m=0}^{i-1} H_k g^m_{W \cup \{k\}}  g^{i-1-m}_{[j-1]-W-\{k\} \cup \{*\}}.
\label{line.25}
\end{align}
This then will cancel with \eqref{line.24}. To see this not that when $\ell\neq j,$
\[ \int S_{k,\ell} g^m_{W \cup \{k\}} g^{i-1-m}_{[j] - \{k\} - W}\,dx_j=0\]
since if $j\in W$ then $|W \cup \{k\}|\geq 2$ and if $j\in [j] - \{k\} - W$ then $|[j] - \{k\} - W|\geq 2.$ The sum \eqref{line.24} thus reduces to
\[
\sum_{k=1}^{j-1} \sum_{W \subseteq [j] - \{k,\ell\}} \sum_{m=0}^{i-1} \int S_{k,j} g^m_{W \cup \{k\}} g^{i-1-m}_{[j] - \{k\} - W}\,dx_j
\]
which is then equal to~\eqref{line.25} by exchangeability, so the terms cancel exactly. 

We have thus shown that 
\[\partial_t \psi - \Delta \psi+\sum_{k=1}^{j-1} H_k g^0_{\{k\}}  \psi(x^{[j-1]-\{k\}\cup\{*\}})+\sum_{k=1}^{j-1} H_k \psi  g^{0}_{\{*\}}=0.\]
The claim is then completed by a Gr\"onwall argument on $\|\psi\|_{L^2}^2(t)$ using that $\psi(0,\cdot)=0.$
Using the marginalization of the $g^i_j$ to give the marginalization of the $f^i_j$ and the $\phi^i_j$ is direct from the definition~\eqref{eq.f-ij-def} of $f^i_j$ and then the definition~\eqref{eq.phi-ij-def} of $\phi^i_j$.
\end{proof}

\subsection{Proofs of bounds on the $g^i_j, f^i_j,$ and $R^i_j$}
\label{ss.proofs-of-bounds}

To bound the $R^i_j$ and $f^i_j$ we must first bound the $g^i_j$. The following proof follows similarly to Lemma~\ref{lem.marginalization}, where we inductively iterate up the hierarchy of equations satisfied by $g^i_j$ to find estimates.

\begin{proposition}\label{prop.g-ij-bounds} 
Suppose $f +f^{-1}\in L^\infty(\T^d)$ and $K \in L^\infty_\delta(\T^{2d})$. For all $i\geq 0,$ letting
\begin{equation}
    \tilde g_j^i:=\frac{g_j^i}{\rho^{\otimes j}},
\end{equation}
there exists a constant $C(\|K\|_{L^\infty_\delta},i)$ such that
\begin{equation}\label{eq.gij-bounds}
\int |\tilde g_j^i|^2 \rho^{\otimes j}\,dx \leq Ce^{Ct}.
\end{equation}
\end{proposition}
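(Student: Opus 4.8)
The plan is to argue by induction on $(i,j)$ in the ordering of Definition~\ref{def.order}, running essentially the inductive scheme of Proposition~\ref{prop.marginalization} but with a weighted $L^2$ energy estimate in place of a bare Gr\"onwall estimate. When $(i,j)\notin T$ there is nothing to prove since $g^i_j=0$; the base case $(0,1)$ is immediate because $g^0_1=\rho$ is a probability density, so $\tilde g^0_1\equiv 1$ and $\int|\tilde g^0_1|^2\rho\,dx=1$. So fix $(i,j)\in T$ with $(i,j)>(0,1)$ and suppose~\eqref{eq.gij-bounds} holds for all $g^k_\ell$ with $(k,\ell)<(i,j)$. By Property~\ref{item.g-ij-depend} of Proposition~\ref{prop.g-ij-intro}, the equation~\eqref{eq.g-ij-PDE} for $g^i_j$ is linear in $g^i_j$, and every other $g^k_\ell$ that occurs in it---including $g^i_{j+1}$, which enters through $H_k g^i_{[j]\cup\{*\}}$ and satisfies $(i,j+1)<(i,j)$---is already controlled by the inductive hypothesis. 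The qualitative bounds $g^i_j,\rho,\rho^{-1}\in L^\infty_{loc}([0,\infty),L^\infty)$ recorded at the start of this section make all of the integrations by parts below legitimate.

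The first step is to differentiate $x_j(t):=\int (g^i_j)^2/\rho^{\otimes j}\,dx$, using~\eqref{eq.g-ij-PDE} for $g^i_j$ together with the tensorized McKean--Vlasov equation $\partial_t\rho^{\otimes j}-\Delta\rho^{\otimes j}+\sum_k\nabla_{x_k}\cdot(b(x_k)\rho^{\otimes j})=0$, where $b(x):=\int K(x,x_*)\rho(x_*)\,dx_*$. As in the opening lines of the proof of Proposition~\ref{prop.l2-hierarchy}, the two diffusion terms combine to give $-2\int|\nabla\tilde g^i_j|^2\rho^{\otimes j}\,dx$. The left-hand transport term $\sum_k H_k g^i_{[j]}g^0_{\{*\}}=\sum_k\nabla_{x_k}\cdot(b(x_k)g^i_j)$ matches the drift of $\rho^{\otimes j}$ exactly: after one integration by parts the two contributions collapse to $\int\sum_k\nabla_{x_k}\cdot\big((\tilde g^i_j)^2\rho^{\otimes j}b(x_k)\big)\,dx=0$. (This cancellation is cleaner than in Proposition~\ref{prop.l2-hierarchy}, where the corresponding hierarchy term carried one extra variable and so did not cancel.) This exact cancellation is the one genuinely structural point in the argument.

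The remaining terms---the other left-hand transport term $\sum_k H_k g^0_{\{k\}}g^i_{[j]\cup\{*\}-\{k\}}$ and every term on the right of~\eqref{eq.g-ij-PDE}---are all of transport type with $L^\infty$ coefficients, so after integrating by parts to put the derivative on $\tilde g^i_j$ each is handled by Young's inequality: a small multiple of $\int|\nabla\tilde g^i_j|^2\rho^{\otimes j}\,dx$, to be absorbed into the Dirichlet term, plus a remainder. For the remainder I would substitute $g^k_\ell=\tilde g^k_\ell\,\rho^{\otimes\ell}$ throughout and use that in each term the relevant product of cluster functions factorizes over a partition of the variables into disjoint blocks; applying Cauchy--Schwarz in $x_*$ to the block carrying the $*$ index and Fubini across the remaining blocks then bounds the remainder by $C(\|K\|_{L^\infty},i)$ times a product of quantities of the form $\int|\tilde g^k_\ell|^2\rho^{\otimes\ell}\,dx$ with $(k,\ell)<(i,j)$---each $\le Ce^{Ct}$ by induction---except that (i) the term $H_k g^i_{[j]\cup\{*\}}$ produces $C\|K\|_{L^\infty}^2 x_{j+1}\le Ce^{Ct}$, again by induction, and (ii) the term $H_k g^0_{\{k\}}g^i_{[j]\cup\{*\}-\{k\}}$ produces, after the same manipulation with the (now free) variable $x_k$ integrated out, a term $\le 2j\|K\|_{L^\infty}^2\,x_j$, in which $x_j$ itself reappears. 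Since $j\le i+1$ and the number of summands is controlled by $i$, choosing the Young parameters small enough to leave $-\tfrac12\int|\nabla\tilde g^i_j|^2\rho^{\otimes j}\,dx\le 0$ yields $\dot x_j\le C x_j+Ce^{Ct}$ with $C=C(\|K\|_{L^\infty},i)$ and $x_j(0)=0$; Gr\"onwall's inequality then gives $x_j(t)\le Ce^{Ct}$ and closes the induction. The main labor, as usual in this paper, is the bookkeeping of the many product terms on the right of~\eqref{eq.g-ij-PDE}; the only thing one must be careful about analytically is checking that the transport terms never generate anything worse than a constant multiple of $x_j$, which is precisely what the $x_k$-integration in point (ii) ensures.
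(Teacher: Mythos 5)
Your proposal is correct and follows essentially the same route as the paper's proof: induction in the ordering on $T$, a weighted energy estimate in which the $\sum_k H_k g^i_{[j]}g^0_{\{*\}}$ term cancels exactly against the drift of $\rho^{\otimes j}$, the term $\sum_k H_k g^0_{\{k\}}g^i_{[j]\cup\{*\}-\{k\}}$ yielding a Gr\"onwall term $Cjx_j$, the $g^i_{j+1}$ term handled by the inductive hypothesis since $(i,j+1)<(i,j)$, and the remaining product terms bounded via H\"older and the block factorization using the already-controlled $\tilde g^k_\ell$. The only differences are cosmetic (the paper groups the right-hand side as $\nabla\cdot F^i_j$ with eight pieces and bounds $\int|F^i_j/\rho^{\otimes j}|^2\rho^{\otimes j}$ after Young's inequality).
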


    \begin{proof}
    We will inductively show this bound holds for $(i,j)\in D$ under the order given in Definition~\ref{def.order}.

    The bound trivially holds in the base case $(i,j)=(0,1)$ since $\tilde g_1^0=1$, thus 
    \[ \int |\tilde g_1^0|^2 \rho^{\otimes j}\,dx=1.\]
    
    Assuming that for all $(k,\ell)<(i,j)$ the bound \eqref{eq.gij-bounds} holds, we group the terms on the right hand side of the equation for $g_j^i$ to write
    \[\partial_t g^i_j - \Delta g^i_j+\sum_{k=1}^j H_k g^0_{\{k\}}  g^{i}_{[j]\cup \{*\} - \{k\}}+\sum_{k=1}^j H_k g^i_{[j]}  g^{0}_{\{*\}}=\nabla\cdot F_j^i\]
    where 
    \[ F_j^i=\sum_{k=1}^8F_{j,k}^i\]
    with
    \[ F_{j,1}^i=-\sum_{k=1}^j e_k\otimes\int K(x_k,x_*) g^i_{[j]\cup\{*\}}\, dx_*\]
    \[ F_{j,2}^i=-\sum_{k=1}^j \sum_{W \subseteq [j] -\{k\}} \sum_{m=1}^{i-1}e_k\otimes \int K(x_k,x_*) g^m_{W \cup \{k\}}  g^{i-m}_{[j]\cup \{*\} - W-\{k\}}\,dx_*\]
    and the other $F_{j,k}^i$ are defined similarly in the order of the equation \eqref{eq.g-ij-PDE}. Taking a derivative we find
\begin{align*}
\frac{d}{dt}\int \big|\tilde g_j^i\big|^2\rho^{\otimes j}&=\int 2\partial_t g_j^i \tilde g_j^i-\partial_t \rho^{\otimes j}(\tilde g_j^i)^2\,dx
\\&=\int 2\Delta g_j^i\tilde g_j^i-\Delta\rho^{\otimes j}(\tilde g_j^i)^2\,dx+2\int \nabla \cdot F_j^i\tilde g_j^i\,dx
\\&\qquad-2\int \sum_{k=1}^j H_k g^0_{\{k\}}  g^{i}_{[j]\cup \{*\} - \{k\}}\tilde g_j^i\,dx-2\int\sum_{k=1}^j H_k g^i_{[j]}  g^{0}_{\{*\}} \tilde g_j^i\,dx
\\&\qquad+\int \nabla\cdot\bigg(\sum_{k=1}^j e_k\otimes H_k\rho^{\otimes (j+1)}\bigg)(\tilde g_j^i)^2\,dx
\\&=-2\int |\nabla \tilde g_j^i|^2\rho^{\otimes j}\,dx-2\int \frac{F_j^i}{\rho^{\otimes j}}\cdot\nabla\tilde g_j^i \rho^{\otimes j}\,dx
\\&\qquad+2\int \Bigg(\sum_{k=1}^j e_k\otimes \int K(x_k,x_*)\tilde g^{i}_{[j]\cup \{*\} - \{k\}}\rho(x_*) \,dx_*\Bigg)\cdot \nabla \tilde g_j^i \rho^{\otimes j}\,dx
\\&\qquad+2\int \tilde g^i_j\Bigg(\sum_{k=1}^j e_k\otimes \int K(x_k,x_*)  \rho(x_*)\, dx_*\Bigg)\cdot \nabla \tilde g_j^i \rho^{\otimes j}\,dx
\\&\qquad-2\int \tilde g_j^i\Bigg(\sum_{k=1}^j e_k\otimes \int K(x_k,x_*)\rho(x_*)\,dx_*\Bigg)\cdot \nabla\tilde g_j^i\rho^{\otimes j}\,dx
\\&\leq 2\int \bigg|\frac{F_j^i}{\rho^{\otimes j}}\bigg|^2\rho^{\otimes j}\,dx+2j\int \bigg|\int K(x_1,x_*)\rho(x_*) \tilde g^{i}_{[j]\cup \{*\} - \{1\}}\, dx_*\bigg|^2\rho^{\otimes j}\,dx,
\end{align*}
where the last line follows via Young's inequality. Using Jensen's inequality
\begin{align*}
  \int \bigg|\int K(x_1,x_*)\rho(x_*) \tilde g^{i}_{[j]\cup \{*\} - \{1\}}\, dx_*\bigg|^2\rho^{\otimes j}\,dx&\leq  \int |K(x_1,x_*)|^2  \big|\tilde g^{i}_{[j]\cup \{*\} - \{k\}}\big|^2\rho^{\otimes (j+1)}\,dxdx_*
   \\&\leq \|K\|_{L^\infty_\delta}\int |\tilde g_j^i|^2 \rho^{\otimes j}\,dx.
\end{align*}
This has the form of a Gr\"onwall term, thus all that remains is to bound the term involving $F_j^i$. First we use the triangle inequality to bound
\[\int \Big|\frac{F_j^i}{\rho^{\otimes j}}\Big|^2\rho^{\otimes j}\,dx\leq C \sum_{m=1}^8 \int \Big|\frac{F_{m,j}^i}{\rho^{\otimes j}}\Big|^2\rho^{\otimes j}\,dx.\]
The most intimidating term is $F_{6,j}^i$ which equals
\[\sum_{k=1}^j e_k\otimes\sum_{W \subseteq [j] - \{k\}} \sum_{R \subseteq [j] -\{k\} - W} (j -1 - |W| - |R|) \sum_{m=0}^{i-1} \sum_{n=0}^{i-1-m} \int K(x_k,x_*) g^m_{W \cup \{k\}}  g^n_{R \cup \{*\}} g^{i-1-m-n}_{[j] - R - W - \{k\}}\,dx_*.\]
 Using the triangle inequality over the sums and using exchangeability we find there exists a $j$ dependent constant such that
\begin{align*}
    &\int \Big|\frac{F_{k,j}^i}{\rho^{\otimes j}}\Big|^2\rho^{\otimes j}\,dx
    \\&\qquad\leq C\sum_{\substack{W \subseteq [j] - \{1\}\\R \subseteq [j] -\{1\} - W}}  \sum_{m=0}^{i-1} \sum_{n=0}^{i-1-m} \int \bigg|\int K(x_1,x_*) \tilde g^m_{W \cup \{1\}}  \tilde g^n_{R \cup \{*\}} \tilde g^{i-1-m-n}_{[j] - R - W - \{1\}}\rho(x_*)\,dx_*\Bigg|^2\rho^{\otimes j}\,dx.
\end{align*}
We note that $(m,|W|+1),(n,|R|+1),$ and $(i-1-m-n,j-|R|-|W|-1)$ are all less than $(i,j)$. We can thus bound
\begin{align*}
&\int \Big|\int K(x_1,x*) \tilde g^m_{W \cup \{1\}}  \tilde g^n_{R \cup \{*\}} \tilde g^{i-1-m-n}_{[j] - R - W - \{1\}}d\rho(x_*)\Big|^2\rho^{\otimes j}\,dx
\\&\qquad\leq \int \Big|\int K(x_1,x*)  \tilde g^n_{R \cup \{*\}}d\rho(x_*)\Big|^2|\tilde g^m_{W \cup \{1\}}|^2|\tilde g^{i-1-m-n}_{[j] - R - W - \{1\}}|^2 \rho^{\otimes j}\,dx
\\&\qquad\leq \|K\|_{L^\infty_\delta}^2\int |\tilde g^n_{R \cup \{*\}}|^2|\tilde g^m_{W \cup \{1\}}|^2|\tilde g^{i-1-m-n}_{[j] - R - W - \{1\}}|^2 \rho^{\otimes (j+1)}\,dxdx_*
\\&\qquad= \|K\|_{L^\infty_\delta}^2\int |\tilde g^n_{R \cup \{*\}}|^2\rho^{\otimes |R|+1}\,dx^{R\cup\{*\}}\int|\tilde g^m_{W \cup \{1\}}|^2\rho^{\otimes |W|+1}\,dx^{W\cup\{1\}}
\\&\qquad\qquad\times\int|\tilde g^{i-1-m-n}_{[j] - R - W - \{1\}}|^2 \rho^{\otimes j - |R| - |W| - 1} \,dx^{[j]-R-W-\{1\}}
\\&\qquad\leq Ce^{Ct} \Big(\sup_{(k,\ell) < (i,j)} \int |\tilde g^k_\ell|^2 \rho^{\otimes \ell}\ dx\Big)^3,
\end{align*}
where the second inequality follows by Jensen's inequality. Terms $F_{1,j}^i$ to $F_{5,j}^i$ are bounded similarly. The bounds on $F_{6,j}^i$ and $F_{8,j}^i$ are also straightforward and rely on bounding for $W\subset [j]-\{k,\ell\}$ integrals of the form
\begin{align*}
&\int\Big|K(x_k,x_\ell) \tilde g^m_{W \cup \{k\}} \tilde g^{i-1-m}_{[j] - \{1\} - W}\Big|^2 \rho^{\otimes j}\,dx
\\&\qquad\leq\|K\|_{L^\infty_\delta}^2 \int|\tilde g^m_{W \cup \{k\}}|^2\rho^{\otimes|W|+1}\,dx^{W \cup \{k\}}\int|\tilde g^{i-1-m}_{[j] - \{k\} - W}|^2\rho^{\otimes j - 1 - |W|}\, dx^{[j] - \{k\} - W}
\\&\qquad\leq Ce^{Ct}  \Big(\sup_{(k,\ell) < (i,j)} \int |\tilde g^k_\ell|^2 \rho^{\otimes \ell}\ dx\Big)^2.
\end{align*}
All together these bounds imply that
\[
\int \Big|\frac{F_j^i}{\rho^{\otimes j}}\Big|^2\rho^{\otimes j}\,dx\leq Ce^{Ct}  \Big(\sup_{(k,\ell) < (i,j)} \int |\tilde g^k_\ell|^2 \rho^{\otimes \ell}\ dx\Big)^3.
\]

Thus, in total we've found that
\[ \frac{d}{dt} \frac{1}{2}\int |\tilde g_j^i|^2 \rho^{\otimes j}\,dx\leq 3j\int |\tilde g_j^i|^2 \rho^{\otimes j}\,dx+Ce^{Ct} \Big(\sup_{(k,\ell) < (i,j)} \int |\tilde g^k_\ell|^2 \rho^{\otimes \ell}\ dx\Big)^3.\]
Applying Gr\"onwall's inequality and inducting allows us to conclude, noting that $j \leq i+1.$
\end{proof}

With the bounds on $g^i_j$ given by Proposition~\ref{prop.g-ij-bounds} in hand, we can now show the bounds on $f^i_j, \phi^i_j,$ and $R^i_j$ given in Proposition~\ref{prop.f-ij-bound} and Proposition~\ref{prop.r-ij-bound}. Before continuing, we prove a useful representation of the $f^i_j.$

\begin{lemma}
    \label{lem.f-ij-rho-rep}
    \[f^i_j = \sum_{\substack{P \subseteq [j]\\ |P| \leq 2i}}\sum_{\pi \vdash P} \sum_{\substack{(i_Q)_{Q \in \pi}\\ \sum i_Q = i\\ i_Q \geq 1}}  \rho^{\otimes (j- |P|)}(x^{[j] - P})\prod_{Q \in \pi} g^{i_Q}_Q.\]
\end{lemma}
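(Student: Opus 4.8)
The plan is to obtain this representation by directly reorganizing the defining sum~\eqref{eq.f-ij-def}, using only two facts from Proposition~\ref{prop.g-ij-intro}: that $g^0_1 = \rho$, and that $g^k_\ell = 0$ for $(k,\ell) \notin T$ — in particular $g^0_\ell = 0$ for all $\ell \geq 2$, since $(0,\ell)\in T$ only when $\ell = 1$.

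First I would look at a generic term of~\eqref{eq.f-ij-def}: a partition $\pi \vdash [j]$ together with exponents $(i_Q)_{Q \in \pi}$ satisfying $\sum_Q i_Q = i$, contributing $\prod_{Q \in \pi} g^{i_Q}_Q$. Any block $Q$ with $i_Q = 0$ and $|Q| \geq 2$ forces $g^0_{|Q|} = 0$, so such terms drop out; the surviving terms are exactly those in which every exponent-$0$ block is a singleton, each contributing the factor $g^0_1 = \rho$. Setting $P := \bigcup\{Q \in \pi : i_Q \geq 1\}$, the complement $[j] - P$ consists precisely of the indices sitting in these exponent-$0$ singletons, so together they contribute $\rho^{\otimes(j - |P|)}(x^{[j] - P})$, while $\pi$ restricted to $P$ is a partition of $P$ all of whose blocks carry exponent $\geq 1$ and whose exponents still sum to $i$.

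Next I would verify that the map sending $(\pi, (i_Q))$ to $(P, \pi|_P, (i_Q)_{Q\in\pi|_P})$ is a bijection between the nonzero terms of~\eqref{eq.f-ij-def} and the triples $(P, \pi', (i_Q))$ with $P \subseteq [j]$, $\pi' \vdash P$, each $i_Q \geq 1$, and $\sum_Q i_Q = i$; the inverse simply re-attaches the singleton blocks $\{k\}$ for $k \in [j] - P$, each with exponent $0$. This already yields the claimed identity, but with the outer sum ranging over all $P \subseteq [j]$. To reduce the range to $|P| \leq 2i$, I would observe that every nonzero term automatically satisfies this: for $g^{i_Q}_{|Q|} \neq 0$ one needs $(i_Q,|Q|)\in T$, hence $|Q| \leq i_Q + 1$; and since every $i_Q \geq 1$, the number of blocks of $\pi'$ is at most $\sum_Q i_Q = i$. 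Summing over blocks gives $|P| = \sum_Q |Q| \leq \sum_Q(i_Q+1) = i + |\pi'| \leq 2i$, so restricting the outer sum to $|P|\leq 2i$ discards only vanishing terms.

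There is essentially no analytic content here; the only thing requiring care — and hence the ``main obstacle'' in the weak sense that this is the one place a constraint is easy to misplace — is the bookkeeping of the bijection in the edge cases $P = \emptyset$ (which reproduces $f^0_j = \rho^{\otimes j}$ at $i = 0$ and contributes nothing for $i \geq 1$, the inner sum being empty) and $P = [j]$, together with the elementary counting bound $|P| \leq 2i$ above.
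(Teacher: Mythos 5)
Your argument is correct and is essentially the paper's own proof: both identify the nonzero terms of~\eqref{eq.f-ij-def} as those where every exponent-$0$ block is a singleton carrying $g^0_1=\rho$, re-index by $P=\bigcup\{Q: i_Q\geq 1\}$, and obtain $|P|\leq\sum_Q(i_Q+1)\leq 2i$ from $(i_Q,|Q|)\in T$ together with the fact that at most $i$ blocks have positive exponent. Nothing further is needed.
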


\begin{proof}
    By the definition~\eqref{eq.f-ij-def},
\[f^i_j = \sum_{\sigma \vdash [j]} \sum_{\substack{(i_R)_{R \in \sigma}\\ \sum i_R = i}} \prod_{R \in \pi} g^{i_R}_R.\]
Since $g^k_\ell=0$ if $\ell>k+1$, the product
\[\prod_{R \in \sigma} g^{i_R}_R=0\]
unless $|R|\leq i_R+1$ for all $R\in\sigma$. Suppose that $\sigma$ corresponds to a nonzero product. Since $\sum_{R\in\pi}i_R=i$, we have that $i_R\neq 0$ for at most $i$ sets $R\in\sigma$. Thus it must be the case that
\[\sum_{\substack{R\in\sigma\\i_R\neq 0}}|R|\leq \sum_{\substack{R\in\pi\\i_R\neq 0}} i_R+1\leq 2i.\]
Letting $P=\bigcup_{i_R\neq 0}R$, then $|P|\leq 2i$, $\sigma=\pi\cup\{\{k\}:k\in[j]-Q\}$ where $\pi\vdash P$, $\sum_{Q\in\pi}i_Q=i$, $i_Q\geq 1$ for $Q\in\pi$ and $i_{\{k\}}=0$ for $k\notin P$.

Re-indexing the sum which defines $f_j^i$ and using that $g_1^0=\rho$ we thus get the above claimed representation of $f^i_j$.
\end{proof}

We now show the bounds on $f_j^i$. This will be a warm up for the more involved bounds on $R_j^i.$

\begin{proof}[Proof of Proposition~\ref{prop.f-ij-bound}] Using Lemma~\ref{lem.f-ij-rho-rep}, and the definition of $\tilde g_j^i$ given in Proposition~\ref{prop.g-ij-bounds}
\[\frac{f_j^i}{\rho^{\otimes j}}=\sum_{\substack{P \subseteq [j]\\ |P| \leq 2i}}\sum_{\pi \vdash P} \sum_{\substack{(i_Q)_{Q \in \pi}\\ \sum i_Q = i\\ i_Q \geq 1}}\prod_{Q \in \pi} \tilde g^{i_Q}_Q.\]
Thus expanding out the sums 
\begin{align*}
    \int \bigg|\frac{f_i^j}{\rho^{\otimes j}}\bigg|^2\rho^{\otimes j}\,dx=\sum_{\substack{P,R \subseteq [j]\\ |P|,|R| \leq 2i}}
    \sum_{\substack{\pi \vdash P\\\sigma \vdash R}} 
    \sum_{\substack{(i_Q)_{Q \in \pi}\\ \sum i_Q = i\\ i_Q \geq 1}}
    \sum_{\substack{(i_W)_{W \in \pi}\\ \sum i_W = i\\ i_W \geq 1}}\int \prod_{Q \in \pi}\tilde g^{i_Q}_Q\times\prod_{W \in \sigma} \tilde g^{i_W}_W \rho^{\otimes j}\,dx.
\end{align*}
Suppose that $P\neq R$, $\pi\vdash P$, $\sigma\vdash R$, and $i_Q,i_W\geq 1$ where $Q\in \pi$ and $W\in \sigma$. Then it must be the case that
\[\int\prod_{Q \in \pi}\tilde g^{i_Q}_Q\times\prod_{W \in \sigma} \tilde g^{i_W}_W \rho^{\otimes j}\,dx=0.\]
Indeed, if $x_k\in Q\in \pi$, but $x_k$ is not in $R$, then the marginalization given by Lemma~\ref{lem.marginalization} of $g_Q^{i_Q}$ implies this. We thus find that in fact
\begin{align*}
    \int \bigg|\frac{f_i^j}{\rho^{\otimes j}}\bigg|^2\rho^{\otimes j}\,dx=\sum_{\substack{P \subseteq [j]\\ |P| \leq 2i}}
    \sum_{\pi,\sigma \vdash P} 
    \sum_{\substack{(i_Q)_{Q \in \pi}\\ \sum i_Q = i\\ i_Q \geq 1}}
    \sum_{\substack{(i_W)_{W \in \pi}\\ \sum i_W = i\\ i_W \geq 1}}\int \prod_{Q \in \pi}\tilde g^{i_Q}_Q\times\prod_{W \in \sigma} \tilde g^{i_W}_W \rho^{\otimes j}\,dx.
\end{align*}
H\"older's inequality with Proposition~\ref{prop.g-ij-bounds} imply that 
\begin{align*}
    \bigg| \int \prod_{Q \in \pi}\tilde g^{i_Q}_Q\prod_{W \in \sigma} \tilde g^{i_W}_W \rho^{\otimes j}\,dx\bigg|^2&\leq \prod_{Q \in \pi}\int|\tilde g^{i_Q}_Q|^2\rho^{\otimes |Q|}\, dx^Q\times\prod_{W \in \pi}\int|\tilde g^{i_W}_W|^2\rho^{\otimes |W|}\, dx^W
    \\&\leq Ce^{C t},
\end{align*}
where this constant only depends on $i$ since there are at most $i$ terms in the products. On the other hand
\[\sum_{\substack{P \subseteq [j]\\ |P| \leq 2i}}
    \sum_{\pi,\sigma \vdash P} 
    \sum_{\substack{(i_Q)_{Q \in \pi}\\ \sum i_Q = i\\ i_Q \geq 1}}
    \sum_{\substack{(i_W)_{W \in \pi}\\ \sum i_W = i\\ i_W \geq 1}} 1\leq \sum_{\pi,\sigma \vdash P} C \leq C j^{2i}\]
    where the constant $C$ just depends on $i$. This completes the bound on the $f_j^i.$

The bound on $\phi_j^i$ is a direct consequence of this bound and the triangle inequality.
\end{proof}

\begin{proof}[Proof of Proposition~\ref{prop.r-ij-bound}] 
Throughout this proof, we somewhat abuse notation and denote
\[K*\rho(x) := \int K(x,y)\rho(y)\,dy.\]
First we note that
\[
\frac{R^i_j}{\rho^{\otimes j}}=\frac{1}{N^{i+1}} \sum_{k=1}^j e_k\otimes\sum_{\ell=1}^j \int K(x_k,x_*)\frac{f^i_{[j]\cup\{*\}}}{\rho^{\otimes j}}dx_*-K(x_k,x_\ell) \frac{f^i_j}{\rho^{\otimes j}}.
\]
Lemma~\ref{lem.f-ij-rho-rep} implies that
\[\frac{f^i_j}{\rho^{\otimes j}} = \sum_{m=1}^{2i} \sum_{\substack{P \subseteq [j]\\ |P| = m}}\sum_{\pi \vdash P} \sum_{\substack{(i_Q)_{Q \in \pi}\\ \sum i_Q = i\\ i_Q \geq 1}}  \prod_{Q \in \pi} \tilde g^{i_Q}_Q.\]
Thus
\[\frac{f^i_{[j]\cup\{*\}}}{\rho^{\otimes j}} = \frac{f^i_j}{\rho^{\otimes j}} \rho(x_*) +  \rho(x_*)\sum_{m=1}^{2i} \sum_{\substack{P \subseteq [j]\\ |P| = m-1}}\sum_{\pi \vdash P \cup \{*\}} \sum_{\substack{(i_Q)_{Q \in \pi}\\ \sum i_Q = i\\ i_Q \geq 1}} \prod_{Q \in \pi} \tilde g^{i_Q}_Q.\]
Using exchangeability we find
\begin{align*}
    \int \left|\frac{R^i_j}{\rho^{\otimes j}}\right|^2 \rho^{\otimes j}\,dx &\leq  \frac{j}{N^{2(i+1)}} \left|\sum_{\ell=1}^j \int K(x_1,x_*)\frac{f^i_{[j]\cup\{*\}}}{\rho^{\otimes j}}dx_*-K(x_1,x_\ell) \frac{f^i_j}{\rho^{\otimes j}}\right|^2 \rho^{\otimes j}\,dx
    \\&\leq \frac{2j}{N^{2(i+1)}} \int \left| \sum_{\ell=1}^j \left(K*\rho(x_1) -K(x_1,x_\ell) \right)\frac{f^i_j}{\rho^{\otimes j}}\right|^2 \rho^{\otimes j}\,dx
    \\&\qquad  + \frac{2j^3}{N^{2(i+1)}} \int \Bigg| \int K(x_1, x_*) \rho(x_*)\sum_{m=1}^{2i} \sum_{\substack{P \subseteq [j]\\ |P| = m-1}}\sum_{\pi \vdash P \cup \{*\}} \sum_{\substack{(i_Q)_{Q \in \pi}\\ \sum i_Q = i\\ i_Q \geq 1}} \prod_{Q \in \pi} \tilde g^{i_Q}_Q\,dx_*\Bigg|^2 \rho^{\otimes j}\,dx.
\end{align*}
We first consider the second term. Applying Jensen's inequality, we have
\begin{align*}
    &\int \Bigg| \int K(x_1, x_*) \rho(x_*)\sum_{m=1}^{2i} \sum_{\substack{P \subseteq [j]\\ |P| = m-1}}\sum_{\pi \vdash P \cup \{*\}} \sum_{\substack{(i_Q)_{Q \in \pi}\\ \sum i_Q = i\\ i_Q \geq 1}} \prod_{Q \in \pi} \tilde g^{i_Q}_Q\,dx_*\Bigg|^2 \rho^{\otimes j}\,dx
    \\&\quad \leq \int \Bigg| K(x_1, x_*) \sum_{m=1}^{2i} \sum_{\substack{P \subseteq [j]\\ |P| = m-1}}\sum_{\pi \vdash P \cup \{*\}} \sum_{\substack{(i_Q)_{Q \in \pi}\\ \sum i_Q = i\\ i_Q \geq 1}} \prod_{Q \in \pi} \tilde g^{i_Q}_Q \Bigg|^2 \rho^{\otimes (j+1)}\,dx dx_*
    \\&\quad \leq 2i\sum_{m=1}^{2i}\int \Bigg| K(x_1, x_*)  \sum_{\substack{P \subseteq [j]\\ |P| = m-1}}\sum_{\pi \vdash P \cup \{*\}} \sum_{\substack{(i_Q)_{Q \in \pi}\\ \sum i_Q = i\\ i_Q \geq 1}} \prod_{Q \in \pi} \tilde g^{i_Q}_Q \Bigg|^2 \rho^{\otimes (j+1)}\,dx dx_*.
\end{align*}
We now fix $m$ and analyze the term under the integral, expanding the square
\begin{align*}
    &\int \Bigg| K(x_1, x_*)  \sum_{\substack{P \subseteq [j]\\ |P| = m-1}}\sum_{\pi \vdash P \cup \{*\}} \sum_{\substack{(i_Q)_{Q \in \pi}\\ \sum i_Q = i\\ i_Q \geq 1}} \prod_{Q \in \pi} \tilde g^{i_Q}_Q \Bigg|^2 \rho^{\otimes (j+1)}\, dx dx_*
    \\&\quad \leq  \sum_{\substack{P \subseteq [j]\\ |P| = m-1}} \sum_{\substack{R \subseteq [j]\\ |R| = m-1}}\sum_{\pi \vdash P \cup \{*\}} \sum_{\sigma \vdash R \cup \{*\}}\sum_{\substack{(i_Q)_{Q \in \pi}\\ \sum i_Q = i\\ i_Q \geq 1}}  \sum_{\substack{(i_W)_{W \in \sigma}\\ \sum i_W = i\\ i_W \geq 1}}\int  |K(x_1, x_*)|^2   \prod_{Q \in \pi}\tilde g^{i_Q}_Q \prod_{W \in \sigma}  \tilde g^{i_W}_W  \rho^{\otimes (j+1)} \, dx dx_*.
\end{align*}
Note then that unless $P = R$,
\[\int  |K(x_1, x_*)|^2   \prod_{Q \in \pi}\tilde g^{i_Q}_Q \prod_{W \in \sigma} \tilde g^{i_W}_W  \rho^{\otimes (j+1)}\, dx dx_* =0.\]
To see this, suppose $P \ne R$. Since $|P| = |R|,$ then there exists $p\in P$ such that $p \not \in R$ and there exists $r \in R$ such that $r \not \in P$. We must have that $p \ne 1$ or $r \ne 1$. Let us suppose that $p \ne 1$, the other case follows symmetrically. Then let $S \in \pi$ such that $p \in S$. Then
\begin{align*}&\int  |K(x_1, x_*)|^2   \prod_{Q \in \pi} \tilde g^{i_Q}_Q \prod_{W \in \sigma} \tilde g^{i_W}_W  \rho^{\otimes j} \rho(x_*)\,dx dx_* 
\\&\quad= \int |K(x_1, x_*)|^2   \prod_{Q \in \pi - \{S\}} \tilde g^{i_Q}_Q \prod_{W \in \sigma}\tilde g^{i_W}_W \int \tilde g^{i_S}_S  \rho^{\otimes (j+1)}\, dx_p dx_1 \cdots dx_{p-1} dx_{p+1} \cdots dx_j dx_* =0,
\end{align*}
where we use that 
\[\int \tilde g^{i_S}_S \rho^{\otimes |S|}(x^S)\,dx_p = \int g^{i_S}_S\,dx_p = 0,\]
by Lemma~\ref{lem.marginalization}, since $i_S \geq 1$.

Using H\"older's inequality
\begin{align*}
    &\int  |K(x_1, x_*)|^2   \prod_{Q \in \pi}\tilde g^{i_Q}_Q\prod_{W \in \sigma}  \tilde g^{i_W}_W\rho^{\otimes (j+1)}\, dx dx_*
    \\&\qquad\leq \|K\|_{L^\infty_\delta}^2 \bigg(\int \prod_{Q \in \pi}|\tilde g^{i_Q}_Q|^2\rho^{\otimes (j+1)}\,dxdx_*\bigg)^{\frac{1}{2}}\bigg(\int\prod_{W \in \sigma}|\tilde g^{i_W}_W|^2\rho^{\otimes (j+1)}\,dxdx_*\bigg)^{\frac{1}{2}}
    \\&\qquad= \|K\|_{L^\infty_\delta}^2 \prod_{Q \in \pi}\bigg(\int|\tilde g^{i_Q}_Q|^2\rho^{\otimes|Q|}\,dx^{Q}\bigg)^{\frac{1}{2}}\prod_{W \in \sigma}\bigg(\int|\tilde g^{i_W}_W|^2\rho^{\otimes |W|}\,dx^{W}\bigg)^{\frac{1}{2}}
\end{align*}

Since $i_Q \leq i$ for all $Q\in\pi$, Proposition~\ref{eq.gij-bounds} implies that
\begin{equation}\label{eq.g-product-bound}
\prod_{Q \in \pi}\bigg(\int|\tilde g^{i_Q}_Q|^2\rho^{\otimes|Q|}\,dx^{Q}\bigg)^{\frac{1}{2}}\prod_{W \in \sigma}\bigg(\int|\tilde g^{i_W}_W|^2\rho^{\otimes |W|}\,dx^{W}\bigg)^{\frac{1}{2}}\leq (Ce^{C t})^{4i}\leq Ce^{C t}.
\end{equation}
Thus we always have that
\[\int  |K(x_1, x_*)|^2   \prod_{Q \in \pi} \tilde g^{i_Q}_Q \prod_{W \in \sigma} \tilde g^{i_W}_W  \rho^{\otimes j} \rho(x_*)\,dx dx_* \leq Ce^{C t}.\]
We also have that for any $P, R\subseteq [j]$ such that $|P| = |R| = m-1 \leq 2i,$
\[\sum_{\pi \vdash P \cup \{*\}} \sum_{\sigma \vdash R \cup \{*\}}\sum_{\substack{(i_Q)_{Q \in \pi}\\ \sum i_Q = i\\ i_Q \geq 1}}  \sum_{\substack{(i_W)_{W \in \sigma}\\ \sum i_W = i\\ i_W \geq 1}} 1 \leq C.\]
Thus 
\begin{align*}
    &\int \Bigg| K(x_1, x_*)  \sum_{\substack{P \subseteq [j]\\ |P| = m-1}}\sum_{\pi \vdash P \cup \{*\}} \sum_{\substack{(i_Q)_{Q \in \pi}\\ \sum i_Q = i\\ i_Q \geq 1}} \prod_{Q \in \pi} \tilde g^{i_Q}_Q \Bigg|^2 \rho^{\otimes j} \rho(x_*)\,dx dx_* 
\\&\quad \leq \sum_{\substack{P \subseteq [j]\\ |P| = m-1}} \sum_{\substack{R \subseteq [j]\\ |R| = m-1}} C e^{Ct} \delta_{P =R} = C  e^{Ct}\binom{j}{m-1} \leq C e^{Ct} j^{2i-1}.
\end{align*}
Putting it together, we so far have that 
\begin{equation}
\label{eq.remainder-bound-1}
    \int \left|\frac{R^i_j}{\rho^{\otimes j}}\right|^2 \rho^{\otimes j}\,dx \leq C e^{Ct} \Big(\frac{j}{N}\Big)^{2(i+1)} +  \frac{2j}{N^{2(i+1)}} \int \left| \sum_{\ell=1}^j \left(K*\rho(x_1) -K(x_1,x_\ell) \right)\frac{f^i_j}{\rho^{\otimes j}}\right|^2 \rho^{\otimes j}\,dx.
\end{equation}
All that remains therefore is to bound the second term above, which is somewhat more involved. Expanding $\frac{f^i_j}{\rho^{\otimes j}}$, pulling out one of the sums then expanding the square, we get
\begin{align}
    &\int \left| \sum_{\ell=1}^j \left(K*\rho(x_1) -K(x_1,x_\ell) \right)\frac{f^i_j}{\rho^{\otimes j}}\right|^2 \rho^{\otimes j}\,dx 
    \notag\\ &\quad \leq 2i\sum_{m=1}^{2i}\int \Bigg| \sum_{\ell=1}^j \left(K*\rho(x_1) -K(x_1,x_\ell) \right) \sum_{\substack{P \subseteq [j]\\ |P| = m}}\sum_{\pi \vdash P} \sum_{\substack{(i_Q)_{Q \in \pi}\\ \sum i_Q = i\\ i_Q \geq 1}}  \prod_{Q \in \pi} \tilde g^{i_Q}_Q\Bigg|^2 \rho^{\otimes j}\,dx 
    \notag\\&\quad\leq 2i\sum_{m=1}^{2i} \sum_{\ell=1}^j \sum_{k=1}^j \sum_{\substack{P \subseteq [j]\\ |P| = m}} \sum_{\substack{R \subseteq [j]\\ |R| = m}}\sum_{\pi \vdash P} \sum_{\sigma \vdash R}\sum_{\substack{(i_Q)_{Q \in \pi}\\ \sum i_Q = i\\ i_Q \geq 1}}   \sum_{\substack{(i_W)_{W \in \sigma}\\ \sum i_W = i\\ i_W \geq 1}} 1
    \notag\\&\qquad\quad \times\int (K*\rho(x_1) -K(x_1,x_\ell))\cdot (K*\rho(x_1) -K(x_1,x_k))  \prod_{Q \in \pi}\tilde g^{i_Q}_Q \prod_{W \in \sigma} \tilde g^{i_W}_W \rho^{\otimes j}\,dx.
    \label{eq.remainder-bound-2}
\end{align}
We then claim that
\begin{align}\notag&\int (K*\rho(x_1) -K(x_1,x_\ell))\cdot (K*\rho(x_1) -K(x_1,x_k))  \prod_{Q \in \pi}\tilde g^{i_Q}_Q \prod_{W \in \sigma} \tilde g^{i_W}_W \rho^{\otimes j}\,dx 
\\&\qquad\qquad\qquad\qquad\qquad\qquad\leq C e^{Ct} \delta_{\substack{\ell \in P\cup R \cup \{1,k\}\\k \in P \cup R \cup \{1,\ell\}}} \delta_{\substack{P \subseteq R \cup \{1,\ell,k\}\\R \subseteq P \cup \{1,\ell,k\}}}.
\label{eq.remainder-tensorization-cancellation}\end{align}
The bound by $C e^{Ct}$ follows by \eqref{eq.g-product-bound} as
\begin{align*}
&\Bigg|\int (K*\rho(x_1) -K(x_1,x_\ell))\cdot (K*\rho(x_1) -K(x_1,x_k))  \prod_{Q \in \pi}\tilde g^{i_Q}_Q \prod_{W \in \sigma} \tilde g^{i_W}_W \rho^{\otimes j}\,dx \Bigg|
\\&\qquad\leq 4\|K\|_{L^\infty_\delta}^2\prod_{Q \in \pi}\bigg(\int|\tilde g^{i_Q}_Q|^2\rho^{\otimes|Q|}\,dx^{Q}\bigg)^{\frac{1}{2}}\prod_{W \in \sigma}\bigg(\int|\tilde g^{i_W}_W|^2\rho^{\otimes |W|}\,dx^{W}\bigg)^{\frac{1}{2}}\leq C e^{Ct}.
\end{align*}
Thus we just need to show that if if any of the above four conditions fails to hold, the integral is $0$. The integral and conditions are symmetric in $\ell,k$ and also symmetric in $P,R$, so we just need to check the two conditions. If $\ell \not \in P \cup R \cup \{1,k\},$ then 
\begin{align*}
    &\int (K*\rho(x_1) -K(x_1,x_\ell))\cdot (K*\rho(x_1) -K(x_1,x_k))  \prod_{Q \in \pi}\tilde g^{i_Q}_Q \prod_{W \in \sigma} \tilde g^{i_W}_W \rho^{\otimes j}\,dx  
    \\&\quad= \int  (K*\rho(x_1) -K(x_1,x_k))  \prod_{Q \in \pi}\tilde g^{i_Q}_Q \prod_{W \in \sigma} \tilde g^{i_W}_W  \\&\qquad \quad\times \int (K*\rho(x_1) -K(x_1,x_\ell)) \rho^{\otimes j}\,dx_\ell dx_1\cdots dx_{\ell-1} dx_{\ell+1}\cdots dx_j= 0,
\end{align*}
where we use that
\[\int (K*\rho(x_1) -K(x_1,x_\ell))\rho(x_\ell)\,dx_\ell = K*\rho(x_1) - K*\rho(x_1) =0,\]
as $\ell \ne 1$.

Thus we see we get the term $\delta_{\ell \in P\cup R \cup \{1,k\}}$ in the bound and applying the argument with $k$ and $\ell$ switched, we get the term $\delta_{k \in P \cup R \cup \{1,\ell\}}$. Now suppose that $P \not \subseteq R \cup \{1,\ell,k\}$, i.e.\ there exists $p\in P$ s.t.\ $p \not \in  R \cup \{1,\ell,k\}.$ Then let $S \in \pi$ such that $p \in S$. Then we have that  
\begin{align*}
    &\int (K*\rho(x_1) -K(x_1,x_\ell))\cdot (K*\rho(x_1) -K(x_1,x_k))  \prod_{Q \in \pi}\tilde g^{i_Q}_Q \prod_{W \in \sigma} \tilde g^{i_W}_W \rho^{\otimes j}\,dx
    \\&\quad =  \int (K*\rho(x_1) -K(x_1,x_\ell))\cdot (K*\rho(x_1) -K(x_1,x_k))  
    \\&\qquad\quad \times \prod_{Q \in \pi - \{S\}}\tilde g^{i_Q}_Q\prod_{W \in \sigma} \tilde g^{i_W}_W \int \tilde g^{i_S}_S \rho^{\otimes j}\,dx_p dx_1 \cdots dx_{p-1} dx_{p+1} \cdots dx_j =0,
\end{align*}
where we use that 
 \[\int \tilde g^{i_S}_S \rho^{\otimes |S|}(x^S)\,dx_p = \int g^{i_S}_S\,dx_p = 0,\]
 by Lemma~\ref{lem.marginalization}, using $i_S \geq 1$. Thus we get the term $\delta_{P \subseteq R \cup \{1,\ell,k\}}$ and symmetrically get the term $\delta_{R \subseteq P \cup \{1,\ell,k\}}$, thus showing the claim~\eqref{eq.remainder-tensorization-cancellation}.

Thus we are left with bounding
\begin{align} &2i \sum_{m=1}^{2i} \sum_{\ell=1}^j \sum_{k=1}^j \sum_{\substack{P \subseteq [j]\\ |P| = m}} \sum_{\substack{R \subseteq [j]\\ |R| = m}}\sum_{\pi \vdash P} \sum_{\sigma \vdash R}\sum_{\substack{(i_Q)_{Q \in \pi}\\ \sum i_Q = i\\ i_Q \geq 1}}   \sum_{\substack{(i_W)_{W \in \sigma}\\ \sum i_W = i\\ i_W \geq 1}} \delta_{\substack{\ell \in P\cup R \cup \{1,k\}\\k \in P \cup R \cup \{1,\ell\}}} \delta_{\substack{P \subseteq R \cup \{1,\ell,k\}\\R \subseteq P \cup \{1,\ell,k\}}}
\notag\\&\quad= 2i \sum_{m=1}^{2i} \sum_{\ell=1}^j \sum_{k=1}^j \sum_{\substack{P \subseteq [j]\\ |P| = m}} \sum_{\substack{R \subseteq [j]\\ |R| = m}} \delta_{\substack{\ell \in P\cup R \cup \{1,k\}\\k \in P \cup R \cup \{1,\ell\}}} \delta_{\substack{P \subseteq R \cup \{1,\ell,k\}\\R \subseteq P \cup \{1,\ell,k\}}} \sum_{\pi \vdash P} \sum_{\sigma \vdash R}\sum_{\substack{(i_Q)_{Q \in \pi}\\ \sum i_Q = i\\ i_Q \geq 1}}   \sum_{\substack{(i_W)_{W \in \sigma}\\ \sum i_W = i\\ i_W \geq 1}} 1
\notag\\&\quad \leq C \sum_{m=1}^{2i} \sum_{\ell=1}^j \sum_{k=1}^j \sum_{\substack{P \subseteq [j]\\ |P| = m}} \sum_{\substack{R \subseteq [j]\\ |R| = m}}\delta_{\substack{\ell \in P\cup R \cup \{1,k\}\\k \in P \cup R \cup \{1,\ell\}}} \delta_{\substack{P \subseteq R \cup \{1,\ell,k\}\\R \subseteq P \cup \{1,\ell,k\}}},
\label{eq.remainder-bound-3}
\end{align}
where we use that
\[ \sum_{\pi \vdash P} \sum_{\sigma \vdash R}\sum_{\substack{(i_Q)_{Q \in \pi}\\ \sum i_Q = i\\ i_Q \geq 1}}   \sum_{\substack{(i_W)_{W \in \sigma}\\ \sum i_W = i\\ i_W \geq 1}} 1 =  \sum_{\pi \vdash [m]} \sum_{\sigma \vdash [m]}\sum_{\substack{(i_Q)_{Q \in \pi}\\ \sum i_Q = i\\ i_Q \geq 1}}   \sum_{\substack{(i_W)_{W \in \sigma}\\ \sum i_W = i\\ i_W \geq 1}} 1 = C(m,i) \leq C.\]
We now claim that
\[\delta_{\substack{P \subseteq R \cup \{1,\ell,k\}\\R \subseteq P \cup \{1,\ell,k\}}} = 0\]
unless $P =R$ or $P = R - \{a\}\cup \{b\}$ with $a \in R; a \ne b; a,b \in \{1, \ell,k\}.$

To see this, suppose $P \subseteq R \cup \{1,\ell,k\}$ and $R \subseteq P \cup \{1,\ell,k\}.$ Then note that the symmetric difference $P\,\Delta\,R = (P - R) \cup (R - P) \subseteq \{1,\ell,k\}.$  Then, since $|P| = |R|,$ we have that
\[|P -R| = |P| - |R\cap P| = |R| - |R \cap P| =  |R-P|.\]
Thus
\[|P\,\Delta\,R| = |P-R| + |R-P| = 2 |P-R|.\]
Thus $P\,\Delta\,R$ is an even sized subset of $\{1,\ell,k\}$, hence either $P=R$ or $P = R - \{a\}\cup \{b\}$ with $a \in R; a \ne b; a,b \in \{1, \ell,k\}$, as claimed. Let us first deal with the case that $P=R$, in which case the sum becomes 
\[ \sum_{\substack{P \subseteq [j]\\ |P| = m}} \sum_{\ell=1}^j \sum_{k=1}^j\delta_{\ell \in P  \cup \{1,k\}} \delta_{k \in P \cup \{1,\ell\}} \leq \sum_{\substack{P \subseteq [j]\\ |P| = m}} \sum_{\ell=1}^j m+2 \leq Cj^{2i+1},\]
using that $m \leq 2i.$

For $P\neq R$ the remaining part of the sum to bound is 
\begin{align*}&\sum_{\ell=1}^j \sum_{k=1}^j \sum_{\substack{R \subseteq [j]\\ |R| = m}} \sum_{a \in \{1,k,\ell\} \cap R} \sum_{b \in \{1,k,\ell\} - \{a\}} \delta_{\substack{\ell \in (R - \{a\} \cup \{b\}) \cup R \cup \{1,k\}\\ k \in (R - \{a\} \cup \{b\}) \cup R \cup \{1,\ell\}}} 
\\&\quad= \sum_{\substack{R \subseteq [j]\\ |R| = m}}\sum_{\ell=1}^j \sum_{k=1}^j \sum_{a \in \{1,k,\ell\} \cap R} \sum_{b \in \{1,k,\ell\} - \{a\}} \delta_{\substack{\ell \in R \cup \{1,k,b\}\\ k \in R \cup \{1,\ell,b\}}} 
\\&\quad \leq C_i j^{2i+1} +\sum_{\substack{R \subseteq [j]\\ |R| = m}}\sum_{\ell=2}^j \sum_{k=2, k \ne \ell}^j \sum_{a \in \{1,k,\ell\} \cap R} \sum_{b \in \{1,k,\ell\} - \{a\}} \delta_{\substack{\ell \in R \cup \{b\}\\ k \in R \cup \{b\}}},
\end{align*}
where on the last line we split off the three cases $\ell =1, k=1,$ and $\ell =k$ and apply the straightforward bounds to them separately. We lastly split the remaining term along the cases $a=1, a=k,$ and $a=\ell$. The first case $a=1$ gives
\[\sum_{\substack{R \subseteq [j]\\ |R| = m}}\sum_{\ell=2}^j \sum_{k=2, k \ne \ell}^j \sum_{b \in \{k,\ell\}} \delta_{\substack{\ell \in R \cup \{b\}\\ k \in R \cup \{b\}}} \delta_{1 \in R} \leq\sum_{\substack{W \subseteq [j]-\{1\}\\ |W| = m-1}} \sum_{\ell=2}^j \sum_{k=2, k \ne \ell}^j 2 \leq C j^{2i+1}.\]
The second case $a=k$ gives 
\[\sum_{\substack{R \subseteq [j]\\ |R| = m}}\sum_{\ell=2}^j \sum_{k\in R, k \ne \ell}  \sum_{b \in \{1,\ell\}} \delta_{\substack{\ell \in R \cup \{b\}\\ k \in R \cup \{b\}}} \leq 2m \sum_{\substack{R \subseteq [j]\\ |R| = m}}\sum_{\ell=2}^j \leq C j^{2i+1}.\]
The third case $a=\ell$ follows symmetrically. Thus 
\begin{equation}
     \sum_{m=1}^{2i} \sum_{\ell=1}^j \sum_{k=1}^j \sum_{\substack{P \subseteq [j]\\ |P| = m}} \sum_{\substack{R \subseteq [j]\\ |R| = m}}\delta_{\substack{\ell \in P\cup R \cup \{1,k\}\\k \in P \cup R \cup \{1,\ell\}}} \delta_{\substack{P \subseteq R \cup \{1,\ell,k\}\\R \subseteq P \cup \{1,\ell,k\}}}
    \leq \sum_{m=1}^{2i} C j^{2i+1} \leq C j^{2i+1}.
    \label{eq.remainder-bound-4}
\end{equation}
Combining~\eqref{eq.remainder-bound-1},~\eqref{eq.remainder-bound-2},~\eqref{eq.remainder-tensorization-cancellation},~\eqref{eq.remainder-bound-3}, and~\eqref{eq.remainder-bound-4}, we conclude.
\end{proof}

\section{Proofs of cluster expansions and perturbation theory}\label{s.algebraic-proofs}

We give some additional notation for partitions.
\begin{definition}
     We define the following partial order on partitions. If $\sigma, \pi \vdash A$, we say that $\sigma \leq \pi$ if for every $P \in \pi$, there exists $Q \in \sigma$ such that $P \subseteq Q.$ If $\sigma \leq \pi$ we say $\sigma$ is a \textit{combining of} $\pi$. We note that if $\sigma \leq \pi \vdash A,$ then $|\sigma| \leq |\pi| \leq |A|.$
\end{definition}

We note the following combinatoric lemma which we will appeal to frequently in the below proofs.

\begin{lemma}
    \label{lem.combinings-combinatorics}
    Let $S$ be a finite set, $\pi \vdash S.$ Then
    \[\sum_{\sigma \leq \pi} (-1)^{|\sigma| -1} (|\sigma|-1)! =  \begin{cases} 
    1 & |\pi| =1,\\
     0 & |\pi| \geq 2.
    \end{cases}\]
\end{lemma}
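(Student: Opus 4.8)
The plan is to reduce the identity to a purely numerical one depending only on $n := |\pi|$ and then evaluate it with exponential generating functions. \textbf{Step 1 (reduction to partitions of an $n$-set).} I would first note that a combining $\sigma \le \pi$ is the same data as a partition of the \emph{set of blocks} of $\pi$: sending $\sigma$ to the partition of $\pi$ that groups two blocks of $\pi$ together precisely when they lie in a common block of $\sigma$ is a bijection between $\{\sigma : \sigma \le \pi\}$ and the set of partitions of the $|\pi|$-element set $\pi$, and it preserves the number of blocks. Consequently, writing $n = |\pi|$,
\[\sum_{\sigma \le \pi} (-1)^{|\sigma|-1}(|\sigma|-1)! \;=\; b_n, \qquad b_n := \sum_{\tau \vdash [n]} (-1)^{|\tau|-1}(|\tau|-1)!,\]
a quantity depending on $\pi$ only through $n$. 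The only partition of a one-element set is the trivial one, so $b_1 = 1$, and it remains to show $b_n = 0$ for $n \ge 2$.

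\textbf{Step 2 (evaluation).} I would organize the sum for $b_n$ by the number $k$ of blocks, so that $b_n = \sum_{k=1}^n (-1)^{k-1}(k-1)!\, S(n,k)$, where $S(n,k)$ is the number of partitions of $[n]$ into $k$ nonempty blocks (a Stirling number of the second kind). Using the standard exponential generating function $\sum_{n \ge k} S(n,k)\, x^n/n! = (e^x - 1)^k/k!$ and interchanging the two sums — harmless, since for each fixed $n$ only the terms with $k \le n$ contribute —
\[\sum_{n \ge 1} b_n \frac{x^n}{n!} \;=\; \sum_{k \ge 1} \frac{(-1)^{k-1}(k-1)!}{k!}\,(e^x-1)^k \;=\; \sum_{k \ge 1} \frac{(-1)^{k-1}}{k}\,(e^x-1)^k \;=\; \log\big(1 + (e^x-1)\big) \;=\; x,\]
where the third equality is the formal power series $\log(1+u) = \sum_{k \ge 1} (-1)^{k-1} u^k/k$ composed with $u = e^x - 1$, legitimate because $e^x - 1$ has zero constant term. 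Comparing coefficients of $x^n$ in $\sum_{n \ge 1} b_n x^n/n! = x$ gives $b_1 = 1$ and $b_n = 0$ for all $n \ge 2$, which together with Step 1 is the claim.

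I do not expect a genuine obstacle here: the proof is essentially bookkeeping, and the two places deserving a line of care are the bijection of Step 1 — which is exactly what makes the weight $(-1)^{|\sigma|-1}(|\sigma|-1)!$ depend only on $|\pi|$ — and the formal power series manipulation of Step 2. As a remark I would record the slicker alternative in the language of posets: $(-1)^{|\tau|-1}(|\tau|-1)!$ is precisely the value $\mu_{\Pi_n}(\tau, \hat 1)$ of the Möbius function of the partition lattice $\Pi_n$, since the interval $[\tau, \hat 1]$ is isomorphic to $\Pi_{|\tau|}$; hence $b_n = \sum_{\tau \in \Pi_n} \mu_{\Pi_n}(\tau, \hat 1)$, which by the defining recursion of the Möbius function equals $1$ when $\hat 0 = \hat 1$ in $\Pi_n$ (that is, $n = 1$) and $0$ otherwise.
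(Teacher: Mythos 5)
Your proposal is correct and follows essentially the same route as the paper: the same bijection between combinings of $\pi$ and partitions of the block-set $\pi$ reduces everything to the numerical identity $\sum_{\tau \vdash [n]}(-1)^{|\tau|-1}(|\tau|-1)! = \delta_{n,1}$, which the paper obtains by citing Fa\`a di Bruno applied to $\log e^x$ and which your exponential-generating-function computation $\log\big(1+(e^x-1)\big)=x$ verifies explicitly — the same underlying identity in a different notation.
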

\begin{proof}
In order to evaluate these sums, we take advantage of the natural isomorphism from partitions of $\pi$ to combinings of $\pi$. For $\Pi \vdash \pi,$ we let
\[\sigma(\Pi) = \left\{\bigcup_{P \in \alpha} P : \alpha \in \Pi\right\}.\]
Note that $\sigma$ defines a bijection between partitions of $\pi$ and combinings of $\pi$, and further that $|\sigma(\Pi)| = |\Pi|$. This immediately implies that 
\[\sum_{\sigma \leq \pi} (-1)^{|\sigma| -1} (|\sigma|-1)!=\sum_{\Pi \vdash \pi} (-1)^{|\Pi| -1} (|\Pi|-1)!.\]
The lemma then follows after applying the following fact
    \[\sum_{\alpha \vdash [j]} (-1)^{|\alpha| -1} (|\alpha|-1)! = \begin{cases} 
    1 & j =1,\\
     0 & j \geq 2,
    \end{cases}\]
    which follows by the Fa\`a di Bruno's formula applied to $\log e^x$.
\end{proof}

\begin{proof}[Proof of Proposition \ref{prop.cluster-representation}]
    We prove the equality inductively in $j$. The case $j=1$ is clear. For $j \geq 2$, we have
    \begin{align*}
         g_j &=\sum_{\pi \vdash [j]} (-1)^{|\pi| -1} (|\pi|-1)!\prod_{P \in \pi} f_P
         \\&= f_j+\sum_{\pi \vdash [j],|\pi|\geq 2} (-1)^{|\pi| -1} (|\pi|-1)!\prod_{P \in \pi} f_P
         \\&=  f_j+\sum_{\pi \vdash [j],|\pi|\geq 2} (-1)^{|\pi| -1} (|\pi|-1)!\prod_{P \in \pi} \sum_{\sigma \vdash P } \prod_{Q \in \sigma} g_Q
        \\&=  f_j+\sum_{\pi \vdash [j],|\pi|\geq 2} (-1)^{|\pi| -1} (|\pi|-1)! \sum_{\sigma \geq \pi } \prod_{Q \in \sigma} g_Q
        \\&= f_j+ \sum_{\sigma \vdash [j]} \sum_{\pi \leq \sigma, |\pi| \geq 2}(-1)^{|\pi| -1} (|\pi|-1)! \prod_{P \in \sigma} g_P.
    \end{align*}
    We have now collected all the terms $\prod_{P \in \sigma} g_P$ so all that remains is to compute the combinatoric constants. To that end, using Lemma~\ref{lem.combinings-combinatorics}, we have
    \[
       \sum_{\pi \leq \sigma, |\pi| \geq 2}(-1)^{|\pi| -1} (|\pi|-1)! = -1 + \sum_{\pi \leq \sigma}(-1)^{|\pi| -1} (|\pi|-1)!=  \begin{cases}
        0 & |\sigma| = 1,\\ -1 & |\sigma| \geq 2.
        \end{cases}
    \]
    Plugging this in above allows us to conclude.
\end{proof}

\begin{proof}[Proof of Proposition \ref{prop.cluster-function-equation}]
    We start by just directly computing $(\partial_t - \Delta)g_j$ using the definition of $g_j$ in terms of the $f_j$, 
    \begin{align}
        \partial_t g_j - \Delta g_j &=  \sum_{\pi \vdash [j]} (-1)^{|\pi| -1} (|\pi|-1)!\sum_{P \in \pi} (\partial_t f_P - \Delta f_P) \prod_{\substack{Q \in \pi\\ Q \ne P}} f_Q
        \notag\\& =-\sum_{\pi \vdash [j]} (-1)^{|\pi| -1} (|\pi|-1)!\sum_{P \in \pi} \left( \frac{N-|P|}{N} \sum_{k \in P}  H_k f_{P \cup \{*\}} +  \frac{1}{N} \sum_{k,\ell \in P} S_{k,\ell} f_P\right)\prod_{\substack{Q \in \pi\\ Q \ne P}} f_Q. \label{eq.g-j-comp}
    \end{align}
    Note that, consistent with the definition of $H_k$, the variable $*$ is always the coordinate being integrated over. We consider the $H_k$ terms and the $S_{k,\ell}$ terms separately. We first consider the $S_{k,\ell}$ terms. We use Proposition \ref{prop.cluster-representation} to expand each of the $f_R$ in terms of $g_Q$
    \begin{align*}
       &\sum_{\pi \vdash [j]} (-1)^{|\pi| -1} (|\pi|-1)!\sum_{P \in \pi}   \sum_{k,\ell \in P} S_{k,\ell} f_P\prod_{\substack{Q \in \pi\\ Q \ne P}} f_Q 
       \\&\qquad =\sum_{\pi \vdash [j]} (-1)^{|\pi| -1} (|\pi|-1)!\sum_{P \in \pi}   \sum_{k,\ell \in P} \sum_{\pi \leq \sigma} S_{k,\ell}\prod_{Q \in \sigma} g_Q 
       \\&\qquad=\sum_{k,\ell=1}^j \sum_{\sigma \vdash [j]} \bigg(\sum_{\substack{\pi \leq \sigma\\\exists P \in \pi, \{k,\ell\} \subseteq P}} (-1)^{|\pi| -1} (|\pi|-1)! \bigg)S_{k,\ell} \prod_{Q \in \sigma} g_Q 
       \\&\qquad=: \sum_{k,\ell=1}^j \sum_{\sigma \vdash [j]} a_{k,\ell}^\sigma S_{k,\ell} \prod_{P \in \sigma} g_P.
    \end{align*}
    We now compute $a_{k,\ell}^\sigma$. Fix $\sigma, k,\ell$. We split into two cases, the first being that there exists $Q \in \sigma$ such that $\{k,\ell\} \subseteq Q$ and second being that there exists $Q, R \in \sigma, Q \ne R$ such that $k \in Q, \ell \in R$. Note that in the second case, it must be that $k\neq \ell$.

    In the first case, since for any $\pi \leq \sigma$, by definition of the order, there exists $P \in \pi$ such that $Q \subseteq P$, as such $\{k,\ell\} \subseteq P$. Thus 
    \begin{align*}
    a_{k,\ell}^\sigma &= \sum_{\substack{\pi \leq \sigma\\\exists P \in \pi, \{k,\ell\} \subseteq P}} (-1)^{|\pi| -1} (|\pi|-1)!
    \\& = \sum_{\pi \leq \sigma} (-1)^{|\pi| -1} (|\pi|-1)!
    \\&= \begin{cases}
        1 & |\sigma| = 1\\
        0 & |\sigma| \geq 2,
    \end{cases}
    \end{align*}
    where we use Lemma~\ref{lem.combinings-combinatorics} to conclude.

    For the second case, we first write $\sigma = \{Q,R,W_1,...,W_m\}$ such that $k \in Q, \ell \in R$. Then define
    \[\tilde \sigma := \{Q \cup R, W_1,...,W_m\}.\]
    Then we note that $\pi \leq \sigma$ for which there exists $P \in \pi$ such that $\{k,\ell\} \subseteq P$ if and only if $\pi \leq \tilde \sigma.$ Thus 
    \[
    a_{k,\ell}^\sigma =  \sum_{\pi \leq \tilde \sigma} (-1)^{|\pi| -1} (|\pi|-1)! = \begin{cases}
        1 & |\sigma| = 2\\
        0 & |\sigma| \geq 3,
    \end{cases}
    \]
    once again using Lemma~\ref{lem.combinings-combinatorics} and noting $|\sigma| = |\tilde \sigma| +1.$

    Combining these two cases, we note the complete formula for $a_{k,\ell}^\sigma$ is given by 
    \begin{equation}
    \label{eq.a-sig-formula}
    a_{k,\ell}^\sigma = \begin{cases} 1 & |\sigma| =1\\ 1 & \sigma= \{Q, R\}, k \in Q, \ell\in R\\ 
    0 & \sigma = \{Q,R\}, k,\ell \in Q\\
    0 & |\sigma| \geq 3.
    \end{cases}    
    \end{equation}

    We have thus dealt with the $S_{k,\ell}$ terms completely. We now proceed to the $H_k$ terms. Similarly, we compute
    \begin{align*}
        &\sum_{\pi \vdash [j]} (-1)^{|\pi| -1} (|\pi|-1)!\sum_{P \in \pi} \frac{N-|P|}{N} \sum_{k \in P}  H_k f_{P \cup \{*\}}\prod_{\substack{Q \in \pi\\ Q \ne P}} f_Q
        \\&\qquad= \sum_{\pi \vdash [j]} (-1)^{|\pi|-1} (|\pi|-1)! \sum_{P \in \pi} \frac{N-|P|}{N}\sum_{k \in P} \sum_{\tilde \pi \leq \sigma} H_k \prod_{Q \in \sigma} g_Q, 
    \end{align*}
    where if $\pi = \{P,W_1,...,W_m\}$, then, letting $\tilde P := P \cup \{*\},$ we define
    \[\tilde \pi := \{\tilde P,W_1,...,W_m\} \vdash [j] \cup\{*\}.\]
    Continuing the above computation and reindexing sums, we get 
    \begin{align*}&\sum_{\pi \vdash [j]} (-1)^{|\pi|-1} (|\pi|-1)! \sum_{P \in \pi} \frac{N-|P|}{N}\sum_{k \in P} \sum_{\tilde \pi \leq \sigma} H_k \prod_{Q \in \sigma} g_Q 
    \\&\qquad= \sum_{k=1}^j \sum_{ \sigma \vdash [j] \cup \{*\}}  \bigg(\sum_{\substack{\tilde \pi \leq \sigma\\ \exists \tilde P \in \tilde \pi, \{k,*\} \subseteq \tilde P }}  (-1)^{|\tilde \pi|-1} (|\tilde \pi|-1)! \frac{N-|\tilde P|+1}{N}\bigg)H_k\prod_{Q \in \sigma} g_{ Q}
    \\&\qquad =:\sum_{k=1}^j \sum_{ \sigma \vdash [j] \cup \{*\}} b_k^\sigma H_k\prod_{Q \in \sigma} g_{ Q},
    \end{align*}
    where we note $\tilde \pi$ is a partition of the larger set $[j] \cup \{*\}.$ We now compute $b_k^\sigma$. Similarly to above, we split according to whether the relevant variables $k,*$ are in the same block of $\sigma$. Writing $\sigma = \{Q,R,W_1,...,W_m\},$ the first case is that $\{k, *\} \subseteq Q$ and the second case is that $k \in Q, * \in R.$

    In the first case, we note that if $\tilde \pi \leq \sigma$, then by definition there exists $\tilde P \in \tilde \pi$ such that $Q \subseteq \tilde P$, and as such $\{k,*\} \subseteq \tilde P$. Thus 
    \begin{align}\notag b^\sigma_k &= \sum_{\tilde \pi \leq \sigma}  (-1)^{|\tilde \pi|-1} (|\tilde \pi|-1)! \frac{N-|\tilde P|+1}{N}
    \\&= \sum_{\rho \leq \sigma - \{Q\}} (-1)^{|\rho|} |\rho|! \frac{N-|Q|+1}{N}+  \sum_{S \in \rho}  (-1)^{|\rho|-1} (| \rho|-1)! \frac{N-|S \cup Q|+1}{N} \label{eq.b-sig-comp},
    \end{align}
    where for the second equality, in order to deal with the term $|\tilde P|$, we look at possible ways of constructing $\tilde P$. We note that any combining $\tilde \pi \leq \sigma$ is generated by first taking a combing $\rho \leq \sigma - \{Q\}$ and then either adding $Q$ as its own block or unioning $Q$ with a block of $\rho$. This corresponds to the first and second term in the sum respectively.

    The above computation is valid in the case that $Q = [j] \cup \{*\},$ but for the sake the analysis to follow let us deal with this edge case now. A direct computation verifies that in that case, we have that $|\sigma| = 1$ and 
    \[b_k^\sigma = \frac{N-j}{N}.\]
    Continuing the above computation with the additional assumption that $|\sigma| \geq 2$, we first note that
    \begin{align}\notag
    &\sum_{S \in \rho}  (-1)^{|\rho|-1} (| \rho|-1)! \frac{N-|S \cup Q|+1}{N} 
    \\&\notag\qquad= \sum_{S \in \rho}  (-1)^{|\rho|-1} (| \rho|-1)! \frac{N-|Q|+1}{N}- \sum_{S \in \rho}  (-1)^{|\rho|-1} (| \rho|-1)! \frac{|S|}{N}
    \\&\qquad = (-1)^{|\rho|-1} | \rho|! \frac{N-|Q|+1}{N} -  (-1)^{|\rho|-1} (| \rho|-1)! \frac{j+1-|Q|}{N}.\label{eq.rho-comp}
    \end{align}
    For the last equality, we use that the first term doesn't depend on $S$, and as such we just get a multiplicative factor of $|\rho|$, which then goes into the factorial. For the second term, we use that 
    \[\sum_{S \in \rho} |S|= |[j] \cup \{*\} - Q| = j+1-|Q|.\]
    Then, plugging~\eqref{eq.rho-comp} into~\eqref{eq.b-sig-comp} and noting the cancellation of the first two terms, we have that
    \[b^\sigma_k =-\frac{j+1-|Q|}{N}\sum_{\rho \leq \sigma - \{Q\}}(-1)^{|\rho|-1} (| \rho|-1)! = \begin{cases}
        -\frac{j+1-|Q|}{N} & |\sigma| = 2\\
        0 & |\sigma| \geq 3,
    \end{cases}  \]
    where we have once again used Lemma~\ref{lem.combinings-combinatorics}. Then recalling the above remarks on the case that $|\sigma| =1,$ we have that
    \[b^\sigma_k = \begin{cases}
        \frac{N-j}{N} & |\sigma| = 1\\
        -\frac{j+1-|Q|}{N} & |\sigma| = 2\\
        0 & |\sigma| \geq 3.
    \end{cases}\]

    We now consider the other case, that $k \in Q, * \in R.$ We then, similarly to the analysis for the $S_{k,\ell}^\sigma$ terms, define
    \[\tilde \sigma := \{Q \cup R, W_1,...,W_m\}.\]
    Then we note that 
    \[b^\sigma_k = \sum_{\substack{\tilde \pi \leq \sigma\\ \exists \tilde P \in \tilde \pi, \{k,*\} \subseteq \tilde P }}  (-1)^{|\tilde \pi|-1} (|\tilde \pi|-1)! \frac{N-|\tilde P|+1}{N}
    = \sum_{\tilde \pi \leq \tilde \sigma}  (-1)^{|\tilde \pi|-1} (|\tilde \pi|-1)! \frac{N-|\tilde P|+1}{N}.\]
    We note now that we are in the same setting as we were for the previous case, except with $\tilde \sigma$ in place of $\sigma$ and $Q\cup R$ in place of $Q$. As such, the same computations demonstrate that, in this case,
    \[b^\sigma_k  =  \begin{cases}
        \frac{N-j}{N} & |\sigma| = 2\\
        -\frac{j+1-|Q| - |R|}{N} & |\sigma| = 3\\
        0 & |\sigma| \geq 4,
    \end{cases}\]
    where we note that $|\sigma| = |\tilde \sigma|+1$. Thus, in total, we have that
    \begin{equation}
    \label{eq.b-sig-formula}
    b^\sigma_k = \begin{cases}
        \frac{N-j}{N} & |\sigma| =1\\
        -\frac{j+1- |Q|}{N} & \sigma = \{Q,R\}, \{k,*\} \subseteq Q\\
        \frac{N-j}{N} & \sigma = \{Q,R\}, k \in Q, * \in R\\
        0 & \sigma = \{Q,R,W\}, \{k,*\} \subseteq Q\\
        -\frac{j+1 - |Q| - |R|}{N} & \sigma = \{Q,R, W\}, k \in Q, * \in R\\
        0 & |\sigma| \geq 4.
    \end{cases}
    \end{equation}

    We have thus computed all the coefficients, so we can plug in~\eqref{eq.a-sig-formula} and~\eqref{eq.b-sig-formula} into~\eqref{eq.g-j-comp} to give the PDE $g_j$ solves.

    For the initial conditions, we remark that as $f_j(0,\cdot)=f^{\otimes j}$, the equation \eqref{eq.forward-cluster-definition} gives that
    \[
g_j= f^{\otimes j}\sum_{\pi \vdash [j]} (-1)^{|\pi| -1} (|\pi|-1)! 
    \]
    thus Lemma \ref{lem.combinings-combinatorics} gives the stated initial conditions.
\end{proof}

\begin{proof}[Proof of Proposition \ref{prop.f-ij-equations}]
    Computing $(\partial_t - \Delta)f^i_j$ using its definition we get
    \[ (\partial_t - \Delta)f^i_j = \sum_{\pi \vdash [j]} \sum_{\substack{(i_P)_{P \in \pi}\\ \sum i_P = i}} \sum_{P \in \pi} (\partial_t - \Delta)g^{i_P}_P\prod_{Q \in \pi -\{P\}} g^{i_Q}_Q.\]
    Then using~\eqref{eq.g-ij-PDE}, this becomes
    \begin{align*}
    \sum_{\pi \vdash [j]} \sum_{\substack{(i_P)_{P \in \pi}\\ \sum i_P = i}} \sum_{P \in \pi} \Bigg(& -\sum_{k \in P} H_k g^{i_P}_{P \cup \{*\}}\prod_{Q \in \pi -\{P\}} g^{i_Q}_Q
    \\&- \sum_{k \in P}\sum_{W \subseteq P -\{k\}} \sum_{m=0}^{i_P} H_k g^m_{W \cup \{k\}} g^{i_P-m}_{P \cup \{*\} - W -\{k\}} \prod_{Q \in \pi -\{P\}} g^{i_Q}_Q
    \\&+  \sum_{k \in P} |P| H_k g^{i_P-1}_{P \cup \{*\}}\prod_{Q \in \pi -\{P\}} g^{i_Q}_Q
    \\&+\sum_{k \in P} \sum_{W \subseteq P -\{k\}} (|P|-1-|W|) \sum_{m=0}^{i_P-1} H_k g^m_{W \cup \{k,*\}} g^{i_P-1-m}_{|P| - \{k\} - W} \prod_{Q \in \pi -\{P\}} g^{i_Q}_Q
    \\&+ \sum_{k \in P} \sum_{W \subseteq P - \{k\}} \sum_{m=0}^{i_P-1} |P| H_k g^m_{W \cup \{k\}} g^{i_P-1-m}_{P \cup \{*\} - W -\{k\}} \prod_{Q \in \pi -\{P\}} g^{i_Q}_Q
    \\&+  \sum_{k \in P} \sum_{W \subseteq P - \{k\}} \sum_{R \subseteq P - \{k\} - W} (|P| - 1- |W| - |R|) 
    \\&\qquad\qquad\times\sum_{m=0}^{i_P-1} \sum_{n=0}^{i_P-1-m} H_k g^m_{W \cup \{k\}} g^n_{R \cup \{*\}} g^{i_P-1-m-n}_{P - R - W - \{k\}} \prod_{Q \in \pi -\{P\}} g^{i_Q}_Q
    \\&-  \sum_{k,\ell \in P} S_{k,\ell} g^{i_P -1}_P \prod_{Q \in \pi -\{P\}} g^{i_Q}_Q
    \\&- \sum_{\substack{k,\ell \in P\\k\neq\ell}} \sum_{W \subseteq P - \{k,\ell\}} \sum_{m=0}^{i_P-1} S_{k,\ell} g^m_{W \cup \{k\}} g^{i_P -1 -m}_{P - \{k\} -W}\prod_{Q \in \pi -\{P\}} g^{i_Q}_Q\Bigg).
\end{align*}
In order to conclude, we must show the above is equal to
\begin{align*}
&-\sum_{k=1}^j H_k f^i_{[j]\cup\{*\}} + j\sum_{k=1}^j H_k f^{i-1}_j - \sum_{k,\ell=1}^j S_{k,\ell} f_j^{i-1}
\\&\qquad= -\sum_k  \sum_{\pi \vdash [j] \cup \{*\}} \sum_{\substack{(i_P)_{P \in \pi}\\ \sum i_P = i}} H_k\prod_{P \in \pi} g^{i_P}_P
\\&\qquad\qquad+ j \sum_k  \sum_{\pi \vdash [j] \cup \{*\}} \sum_{\substack{(i_P)_{P \in \pi}\\ \sum i_P = i-1}} H_k\prod_{P \in \pi} g^{i_P}_P
\\&\qquad\qquad- \sum_{k,\ell}  \sum_{\pi \vdash [j]} \sum_{\substack{(i_P)_{P \in \pi}\\ \sum i_P = i-1}} S_{k,\ell}\prod_{P \in \pi} g^{i_P}_P.
\end{align*}
In particular, we show the following three claims.

\textit{Claim 1:}
\begin{align*}
     &\sum_k  \sum_{\pi \vdash [j] \cup \{*\}} \sum_{\substack{(i_P)_{P \in \pi}\\ \sum i_P = i}} H_k\prod_{P \in \pi} g^{i_P}_P  
     =\sum_{\pi \vdash [j]} \sum_{\substack{(i_P)_{P \in \pi}\\ \sum i_P = i}} \sum_{P \in \pi} \sum_{k \in P} H_k g^{i_P}_{P \cup \{*\}}\prod_{Q \in \pi -\{P\}} g^{i_Q}_Q
    \\&\qquad\qquad+ \sum_{\pi \vdash [j]} \sum_{\substack{(i_P)_{P \in \pi}\\ \sum i_P = i}} \sum_{P \in \pi} \sum_{k \in P}\sum_{W \subseteq P -\{k\}} \sum_{m=0}^{i_P} H_k g^m_{W \cup \{k\}} g^{i_P-m}_{P \cup \{*\} - W -\{k\}} \prod_{Q \in \pi -\{P\}} g^{i_Q}_Q.
\end{align*}

\textit{Claim 2:}

\begin{align}
   & \notag j \sum_k  \sum_{\pi \vdash [j] \cup \{*\}} \sum_{\substack{(i_P)_{P \in \pi}\\ \sum i_P = i-1}} H_k\prod_{P \in \pi} g^{i_P}_P 
   \\&\label{eq.claim-2-1}\quad=  \sum_{\pi \vdash [j]} \sum_{\substack{(i_P)_{P \in \pi}\\ \sum i_P = i}} \sum_{P \in \pi}  \sum_{k \in P} |P| H_k g^{i_P-1}_{P \cup \{*\}}\prod_{Q \in \pi -\{P\}} g^{i_Q}_Q
    \\&\label{eq.claim-2-2}\qquad\sum_{\pi \vdash [j]} \sum_{\substack{(i_P)_{P \in \pi}\\ \sum i_P = i}} \sum_{P \in \pi} \sum_{k \in P} \sum_{W \subseteq P -\{k\}} (|P|-1-|W|) \sum_{m=0}^{i_P-1} H_k g^m_{W \cup \{k,*\}} g^{i_P-1-m}_{|P| - \{k\} - W} \prod_{Q \in \pi -\{P\}} g^{i_Q}_Q
    \\&\label{eq.claim-2-3}\qquad \sum_{\pi \vdash [j]} \sum_{\substack{(i_P)_{P \in \pi}\\ \sum i_P = i}} \sum_{P \in \pi} \sum_{k \in P} \sum_{W \subseteq P - \{k\}} \sum_{m=0}^{i_P-1} |P| H_k g^m_{W \cup \{k\}} g^{i_P-1-m}_{P \cup \{*\} - W -\{k\}} \prod_{Q \in \pi -\{P\}} g^{i_Q}_Q
    \\&\label{eq.claim-2-4}\qquad  \sum_{\pi \vdash [j]} \sum_{\substack{(i_P)_{P \in \pi}\\ \sum i_P = i}} \sum_{P \in \pi} \sum_{k \in P} \sum_{W \subseteq P - \{k\}} \sum_{R \subseteq P - \{k\} - W} (|P| - 1- |W| - |R|) 
    \\&\notag\qquad\qquad\qquad\times\sum_{m=0}^{i_P-1} \sum_{n=0}^{i_P-1-m} H_k g^m_{W \cup \{k\}} g^n_{R \cup \{*\}} g^{i_P-1-m-n}_{P - R - W - \{k\}} \prod_{Q \in \pi -\{P\}} g^{i_Q}_Q.
\end{align}

\textit{Claim 3:}

\begin{align*}
    &\sum_{k,\ell}  \sum_{\pi \vdash [j]} \sum_{\substack{(i_P)_{P \in \pi}\\ \sum i_P = i-1}} S_{k,\ell}\prod_{P \in \pi} g^{i_P}_P = \sum_{\pi \vdash [j]} \sum_{\substack{(i_P)_{P \in \pi}\\ \sum i_P = i}} \sum_{P \in \pi}\sum_{k,\ell \in P} S_{k,\ell} g^{i_P -1}_P \prod_{Q \in \pi -\{P\}} g^{i_Q}_Q
    \\&\qquad\qquad+ \sum_{\pi \vdash [j]} \sum_{\substack{(i_P)_{P \in \pi}\\ \sum i_P = i}} \sum_{P \in \pi}\sum_{\substack{k,\ell \in P\\k\neq\ell}} \sum_{W \subseteq P - \{k,\ell\}} \sum_{m=0}^{i_P-1} S_{k,\ell} g^m_{W \cup \{k\}} g^{i_P -1 -m}_{P - \{k\} -W}\prod_{Q \in \pi -\{P\}} g^{i_Q}_Q.
\end{align*}

For Claim 1, we note that the first term of the right hand side is simply a sum over all partitions $\pi \vdash [j] \cup \{*\}$ such that $k$ and $*$ are in the same block of $\pi $ (together with all choices of orders $i_P$). Then the second term on the right hand side is a sum over all partitions $\pi$ such that $k$ and $*$ are in the different blocks of $\pi$. Thus together they give a sum over all partitions, which is equal then to the left hand side. Symbolically 
\begin{align*}
    &\sum_{\pi \vdash [j]} \sum_{\substack{(i_P)_{P \in \pi}\\ \sum i_P = i}} \sum_{P \in \pi} \sum_{k \in P} H_k g^{i_P}_{P \cup \{*\}}\prod_{Q \in \pi -\{P\}} g^{i_Q}_Q
    \\&\qquad\qquad+ \sum_{\pi \vdash [j]} \sum_{\substack{(i_P)_{P \in \pi}\\ \sum i_P = i}} \sum_{P \in \pi} \sum_{k \in P}\sum_{W \subseteq P -\{k\}} \sum_{m=0}^{i_P} H_k g^m_{W \cup \{k\}} g^{i_P-m}_{P \cup \{*\} - W -\{k\}} \prod_{Q \in \pi -\{P\}} g^{i_Q}_Q
    \\&= \sum_k  \sum_{\substack{\pi \vdash [j] \cup \{*\}\\ \exists P \in \pi, \{k, *\} \subseteq P}} \sum_{\substack{(i_P)_{P \in \pi}\\ \sum i_P = i}} H_k\prod_{P \in \pi} g^{i_P}_P  
    \\&\qquad\qquad+ \sum_k  \sum_{\substack{\pi \vdash [j] \cup \{*\}\\ \exists P \in \pi, k \in P, * \not \in P}} \sum_{\substack{(i_P)_{P \in \pi}\\ \sum i_P = i}} H_k\prod_{P \in \pi} g^{i_P}_P  
    \\&= \sum_k  \sum_{\pi \vdash [j] \cup \{*\}} \sum_{\substack{(i_P)_{P \in \pi}\\ \sum i_P = i}} H_k\prod_{P \in \pi} g^{i_P}_P.
\end{align*}

For Claim 2, we note the first and second terms,~\eqref{eq.claim-2-1} and~\eqref{eq.claim-2-2}, sum over the same partitions, namely those partitions $\pi \vdash [j] \cup \{*\}$ such that $k,*$ are in the same block of $\pi$. Thus there is ``overcounting'' and we have to compute the correct constant prefactor on each such partition. Reindexing~\eqref{eq.claim-2-1}, we get
\[ \sum_{\pi \vdash [j]} \sum_{\substack{(i_P)_{P \in \pi}\\ \sum i_P = i}} \sum_{P \in \pi} \sum_{k \in P} |P|H_k g^{i_P-1}_{P \cup \{*\}}\prod_{Q \in \pi -\{P\}} g^{i_Q}_Q =  \sum_k \sum_{\substack{\pi \vdash [j] \cup \{*\}\\ \exists P \in \pi, \{k,*\} \subseteq P}} \sum_{\substack{(i_P)_{P \in \pi}\\ \sum i_P = i-1}} |P|H_k \prod_{P \in \pi} g^{i_P}_P.\]

Then reindexing~\eqref{eq.claim-2-2}, we get
\begin{align*}
    &\sum_{\pi \vdash [j]} \sum_{\substack{(i_P)_{P \in \pi}\\ \sum i_P = i}} \sum_{P \in \pi}\sum_{k \in P} \sum_{W \subseteq P -\{k\}} (|P|-1-|W|) \sum_{m=0}^{i_P-1} H_k g^m_{W \cup \{k,*\}} g^{i_P-1-m}_{P - \{k\} - W} \prod_{Q \in \pi -\{P\}} g^{i_Q}_Q
        \\&\qquad= \sum_k \sum_{\substack{\pi \vdash [j] \cup \{*\}\\ \exists A \in \pi, \{k,*\} \subseteq A}} \sum_{\substack{(i_P)_{P \in \pi}\\ \sum i_P = i-1}} \sum_{B\in\pi-\{A\}}|B| H_k g^{i_A}_{A} g^{i_B}_{B}\prod_{Q \in \pi -\{A,B\}} g^{i_Q}_Q
        \\&\qquad=\sum_k \sum_{\substack{\pi \vdash [j] \cup \{*\}\\ \exists P \in \pi, \{k,*\} \subseteq P}} \sum_{\substack{(i_P)_{P \in \pi}\\ \sum i_P = i-1}} (j-|P|)H_k \prod_{Q \in \pi} g^{i_Q}_Q.
\end{align*}
Thus adding them together, we get 
\begin{equation}
\label{eq.claim-2-together}
j \sum_k  \sum_{\substack{\pi \vdash [j] \cup \{*\}\\\exists P \in \pi, \{k,*\}\subseteq P}} \sum_{\substack{(i_P)_{P \in \pi}\\ \sum i_P = i-1}} H_k\prod_{P \in \pi} g^{i_P}_P.
\end{equation}

Similarly, the third and fourth terms,~\eqref{eq.claim-2-3} and~\eqref{eq.claim-2-4}, sum over the same set of partitions, namely those partitions $\pi \vdash [j] \cup \{*\}$ such that $k,*$ are in different blocks. So we again reindex to compute the constant prefactors. Reindexing~\eqref{eq.claim-2-3}, we get 
\begin{align*}
&\sum_{\pi \vdash [j]} \sum_{\substack{(i_P)_{P \in \pi}\\ \sum i_P = i}} \sum_{P \in \pi} \sum_{k \in P} \sum_{W \subseteq P - \{k\}} \sum_{m=0}^{i_P-1} |P| H_k g^m_{W \cup \{k\}} g^{i_P-1-m}_{P \cup \{*\} - W -\{k\}} \prod_{Q \in \pi -\{P\}} g^{i_Q}_Q
\\&\qquad= \sum_k \sum_{\substack{\pi \vdash [j] \cup \{*\}\\ \exists A,B \in \pi, k \in A, * \not \in B, A \ne B}} \sum_{\substack{(i_P)_{P \in \pi}\\ \sum i_P = i}} (|A| + |B|-1) H_k \prod_{P \in \pi} g^{i_P}_P.
\end{align*}

Finally, reindexing~\eqref{eq.claim-2-4}, we get
\begin{align*}
    &\sum_{\pi \vdash [j]} \sum_{\substack{(i_P)_{P \in \pi}\\ \sum i_P = i}} \sum_{P \in \pi} \sum_{k \in P} \sum_{W \subseteq P - \{k\}} \sum_{R \subseteq P - \{k\} - W} (|P| - 1- |W| - |R|) 
    \\&\qquad\qquad\times\sum_{m=0}^{i_P-1} \sum_{n=0}^{i_P-1-m} H_k g^m_{W \cup \{k\}} g^n_{R \cup \{*\}} g^{i_P-1-m-n}_{P - R - W - \{k\}} \prod_{Q \in \pi -\{P\}} g^{i_Q}_Q
    \\&\qquad=\sum_k \sum_{\substack{\pi \vdash [j] \cup \{*\}\\ \exists A,B \in \pi, k \in A, * \not \in B, A \ne B}}  \sum_{\substack{(i_P)_{P \in \pi}\\ \sum i_P = i}} \sum_{C \in \pi -\{A,B\}} |C| H_k g^{i_A}_A g^{i_B}_B g^{i_C}_C \prod_{Q \in \pi - \{A,B,C\}} g^{i_Q}_Q  
    \\&\qquad=\sum_k \sum_{\substack{\pi \vdash [j] \cup \{*\}\\ \exists A,B \in \pi, k \in A, * \not \in B, A \ne B}}  \sum_{\substack{(i_P)_{P \in \pi}\\ \sum i_P = i}} \sum_{C \in \pi -\{A,B\}} (j+1 - |A| - |B|) H_k \prod_{P \in \pi} g^{i_P}_P. 
\end{align*}
So adding these together, we get
\begin{equation}
\label{eq.claim-2-apart}
j \sum_k  \sum_{\substack{\pi \vdash [j] \cup \{*\}\\\exists P \in \pi, k \in P, * \not \in P}} \sum_{\substack{(i_P)_{P \in \pi}\\ \sum i_P = i-1}} H_k\prod_{P \in \pi} g^{i_P}_P.
\end{equation}
Thus, adding~\eqref{eq.claim-2-together} and~\eqref{eq.claim-2-apart}, we have shown Claim 2.

Lastly, Claim 3 follows exactly as Claim 1.
\end{proof}

\section{Extending Theorem~\ref{thm.main-result-intro} to the general setting}

\label{s.extend}

We now remove the assumptions of Section~\ref{s.hierarchy} that $\Omega = \T^d$ and the initial data $f$ is such that $f + f^{-1} \in L^\infty$, generalizing Proposition~\ref{prop.main-result-special} to Theorem~\ref{thm.main-result-intro}. In this section, we will focus on the case $\Omega = \R^d$ and $f + f^{-1}$ not necessarily in $L^\infty$---the case that $\Omega = \T^d$ and $f + f^{-1} \not \in L^\infty$ is strictly simpler and follows by a subset of the arguments presented below. The arguments of this section are rather technical and likely standard to experts; we present them here for completeness and for the convenience of the reader. The main idea of this section is to exploit that Proposition~\ref{prop.main-result-special} has no quantitative dependence on $\|f+f^{-1}\|_{L^\infty}$. As such, this qualitative assumption can be removed by a suitable approximation argument, adding an upper and lower bound to the initial data, then removing those bounds and passing the estimate to the limit. Similarly, we use that there is no quantitative dependence on the sidelength of the torus. As such, we can pass the estimate to $\R^d$ by treating $\R^d$ as the limit of tori with large sidelengths.

Throughout this section, we fix some initial data $f \geq 0, f \in L^1(\R^d).$ We then let $f_j, g^i_j : \R^{dj} \to \R$ solve~\eqref{eq.hierarchy-equation} and~\eqref{eq.g-ij-PDE} respectively with initial data induced by $f$. From these $f_j, g^i_j$, we define $\gamma^i_j$ as in~\eqref{eq.gamma-ij-def}---using first the definitions~\eqref{eq.f-ij-def} and~\eqref{eq.phi-ij-def} to define the intermediate objects $f^i_j$ and $\phi^i_j$. We also let $\rho := g^0_1$, as above. In order to conclude, it is our goal to prove under the appropriate assumptions that
\[\int_{\R^{dj}} \bigg|\frac{\gamma^{i}_j}{\rho^{\otimes j}}\bigg|^2 \rho^{\otimes j}\,dx \leq C e^{Ct} \Big(\frac{j}{N}\Big)^{2(i+1)}.\]
This will be done by approximating $\gamma^i_j$ and $\rho$ using solutions to the same equations on tori and with different initial data. We now introduce those approximating solutions.

\subsection{Defining $f^R_j$ and $g^{i,R}_j$}

Throughout, we will identify the $n$-dimensional torus with sidelength $2R,\T^n_R,$ with the set $[-R,R]^n$. We also naturally identify functions $h : \T^n_R \to \R$ (or equivalently $h: [-R,R]^n \to \R$) with functions $h : \R^n \to \R$ using extension by $0$, that is $h(x) = 0$ for $x \not \in [-R,R]^n$.

We first define, for each $R \geq 1$, the initial data $f^R : [-R,R]^d \to \R$ given by
\[f^R(x) := A_R \mathbf{1}_{[-R,R]^d}(x)(f(x) \lor R^{-2d})\land R,\]
where $A_R \in (0,\infty)$ is a normalization constant to ensure $\int f^R = \int_{[-R,R]^d} f^R= 1.$ Using dominated convergence, one can readily verify that as $R \to \infty, f^R \to f$ in $L^1(\R^d).$ We note that $f^R$ is defined so that $f^R + (f^R)^{-1} \in L^\infty(\T^d_R)$ so is suitable initial data to apply Proposition~\ref{prop.main-result-special}.

We now let $f_j^R, g^{i,R}_j : \T^{dj}_R \to \R$ respectively solve~\eqref{eq.hierarchy-equation} and~\eqref{eq.g-ij-PDE} with initial data induced by $f^R$. We note that we keep a common interaction $K : \R^{2d} \to \R$ throughout, though only part of the interaction $K$, namely $K|_{[-R,R]^{2d}}$ appears in the equations~\eqref{eq.hierarchy-equation} and~\eqref{eq.g-ij-PDE} for $f^R_j$ and $g^{i,R}_j$. Similarly to above, we then use the $f^R_j, g^{i,R}_j$ to define $\gamma^{i,R}_j$, and we denote $\rho^R := g^{0,R}_1.$ 

\subsection{Defining $\tilde f^R_j$ and $\tilde g^{i,R}_j$}

It turns out to be convenient to define an additional family of functions $\tilde f^R_j$ and $\tilde g^{i,R}_j$. To see this, let us briefly consider an analogous case. For a drift-diffusion equation
\[\begin{cases}
\partial_t h -\Delta h + \nabla \cdot (bh) =0\\
h(0,\cdot) =h_0,
\end{cases}
\]
for $b : \T_R^d \to \R^d,h_0: \T^d_R \to \R$, $h_0\geq 0$, and $h : [0,\infty) \times \T^d_R \to \R,$ there are multiple natural ways to turn this equation into a PDE on $\R^d$. Here we define $b_R : \R^d \to \R^d$ by
\[b_R(x+k) = b(x),\quad \text{for all } x \in [-R,R]^d, k \in 2R \Z^d,\]
that is $b_R$ is the periodization of $b$. On the other hand, we do not take the periodization of $h_0$, but rather use the identification we take throughout of $h_0 : \R^d \to \R$ by $h_0(x) = 0$ for $x \not \in [-R,R]^d$. Then we let $\tilde h : \R^d \to \R$ solve
\[\begin{cases}
    \partial_t \tilde h - \Delta \tilde h + \nabla \cdot (b_R \tilde h) = 0\\
    \tilde h(0,\cdot) = h_0.
\end{cases}\]
This is then a sensible drift diffusion equation on $\R^d$. One can then readily verify that for $x \in [-R,R]^d$ 
\[h(t,x) = \sum_{k \in 2R\Z^d} \tilde h(t,x+k),\]
that is $h$ is directly computable from $\tilde h$. We note that this notion of viewing the equation on $\T^d_R$ as an equation of $\R^d$ is different from periodizing both the initial data and the coefficients, e.g.\ as in that case of periodization one would generically have that the solution has infinite $L^1$ norm, while in the case we consider here, we have that
\[\|\tilde h\|_{L^1(\R^d)} = \|h\|_{L^1(\T_R^d)}.\]

It is through an analogous process to this that we define $\tilde f^R_j, \tilde g^{i,R}_j$. We first define $K^R : \R^{2d} \to \R$ by
\[K^R(x+k,y+j) = K(x,y),\quad \text{for all }x,y \in [-R,R]^d, k,j \in 2R\Z^d.\]
Then we let $\tilde f^R_j, \tilde g^{i,R}_j : \R^{dj} \to \R$ be defined by equations~\eqref{eq.hierarchy-equation} and~\eqref{eq.g-ij-PDE} respectively, with $K^R$ in place of $K$ and initial data induced by $f^R$. One can then verify that, analogous to the example above, for $x \in [-R,R]^{dj},$ 
\[f^R_j(x) = \sum_{k \in 2R \Z^d} \tilde f^R_j(x+k)\quad \text{and}\quad g^{i,R}_j(x) = \sum_{k \in 2R\Z^d} \tilde g^{i,R}_j(x+k).\]
We also define $\tilde \rho^R := \tilde g^{0,R}_1.$ Since the $\tilde f^R_j$ and $\tilde g^{i,R}_j$ solve equations on $\R^{dj}$ similar to the equations of $f_j$ and $g^i_j$, we will be able to directly show that $\tilde f^R_j \to f_j$ and $\tilde g^{i,R}_j \to g^i_j$ in the appropriate sense. We can then use this and Lemma~\ref{lem:R^d-to-torus} below to show that we really have that $f^R_j \to f_j$ and $g^{i,R}_j \to g^i_j$. This, together with some bounds on $\tilde \rho^R$, will allow us to pass the bounds on $\gamma^{i,R}_j$ given by Proposition~\ref{prop.main-result-special} to the bounds on $\gamma^i_j$ needed to conclude Theorem~\ref{thm.main-result-intro}.

\begin{lemma}\label{lem:R^d-to-torus}
Suppose as $R\rightarrow\infty,\tilde h^R\rightarrow h$ in $C^0([0,T],L^1(\R^{dj})).$ Then
if we define
\[h^R(x)=
\begin{cases}
\sum_{k\in 2R\Z^{dj}}\tilde h^R(x+k)&x\in[-R,R]^{dj},\\
0& \text{otherwise},
\end{cases}
\]
then as $R\rightarrow\infty, h^R\rightarrow h$ in $C^0([0,T],L^1(\R^{dj}))$.
\end{lemma}
\begin{proof}
We have that
\[\|h^R-\tilde h^R\|_{C^0_t L^1_x}\leq 2 \sup_{t \in [0,T]} \int_{([-R,R]^{dj})^C}|\tilde h^R|(t,y)\,dy\leq 2\|\tilde h^R-h\|_{C^0_tL^1_x}+2 \sup_{t \in [0,T]} \int_{([-R,R]^{dj})^C}|h|(t,y)\,dy.\]
The first term on the right hand side goes to $0$ by assumption. For second term, we note that $t \mapsto h(t,\cdot)$ is continuous in $L^1(\R^{dj})$, thus in $TV(\R^{dj})$. Therefore the family of functions $\{h(t,\cdot) : t \in [0,T]\}$ is compact in $TV$, hence uniformly tight. Thus the second term goes to $0$ as well, so $h^R \to h$ in $C^0_tL^1_x$. 
\end{proof}

\subsection{Estimating the McKean-Vlasov equation}

We frequently use the heat kernel on $\R^n$,
\[\Phi_t(x) := (4\pi t)^{-n/2} \exp\Big(\frac{|x|^2}{4t}\Big).\]
Throughout the remainder of this section, we very often use the mild form of the various equations for $f_j, g^i_j$. Generically, for an equation of the form
\[\partial_t h - \Delta h = \nabla \cdot G,\]
the mild form---or equivalently Duhamel's principle---gives that
\[h(t,x) = \Phi_t*h(0,x) + \int_0^t \nabla \Phi_{t-s} * G(s,x)\, ds.\]

\begin{proposition}\label{prop:rho-convergence} For all $T>0$, as $R\rightarrow\infty$, $\tilde \rho^{R}\rightarrow \rho$ in $C^{0}([0,T],L^1(\R^d))$.
\end{proposition}

\begin{proof}
We have that
\begin{align*}
(\tilde\rho^R-\rho)(t)&=\Phi_t*(f^R-f)+\int_0^t \nabla\Phi_{t-s}*\bigg((\rho-\tilde\rho^R)\int K^R(\cdot,z)\tilde\rho^R(z)\,dz\bigg)(s)\,ds
\\&\qquad + \int \nabla \Phi_{t-s}*\bigg(\rho\int K^R(\cdot,z)(\rho(z)-\tilde\rho^R(z))\,dz \bigg)(s)\,ds 
\\&\qquad+\int\nabla\Phi_{t-s}*\bigg(\rho\int (K-K^R)(\cdot,z)\rho(z)\,dz\bigg)(s)\,ds.
\end{align*}
Using that there exits $C(d)<\infty$ so that for all $t>0,$
\[|\nabla\Phi_t|(x)\leq Ct^{-1/2}\Phi_{2t}(x),\]
we find that
\begin{align*}
\|\rho-\tilde\rho^R\|_{L^1_x}&\leq \|f-f^R\|_{L^1_x}+C\|K\|_{L^\infty_{\delta}}\int_0^t(t-s)^{-1/2}\|\rho-\tilde\rho^R\|_{L^1_x}\,ds
\\&\quad+\int_0^t\bigg\|\nabla\Phi_{t-s}*\bigg(\rho\int (K-K^R)(\cdot,z)\rho(z)\,dz\bigg)\bigg\|_{L^1_x}\,ds,
\end{align*}
where we use that $\|\rho\|_{L^1_x}(t)=\|\tilde\rho^R\|_{L^1_x}(t)=1$ for all $t\in[0,T].$ Again using our bound on $\nabla\Phi_t$ we find that
\begin{align*}
  &\bigg\|\nabla\Phi_{t-s}*\bigg(\rho\int K-K^R(\cdot,z)\rho(z)\,dz\bigg)\bigg\|_{L^1_x} 
  \\&\qquad\leq C(t-s)^{-1/2}\int\int\int \Phi_{2(t-s)}(x-y)\rho(y)|K-K^R|(y,z)\rho(z)\,dz\,dy\,dx.
\end{align*}
Next we note that $|K-K^R|(y,z)\leq 2\|K\|_{L^\infty_{\delta}} \Big(\mathbf{1}_{([-R,R]^d)^C}(y)+\mathbf{1}_{([-R,R]^d)^C}(z)\Big)$ thus
\begin{align*}
   &\int\int\int \Phi_{2(t-s)}(x-y)\rho(y)|K-K^R|(y,z)\rho(z)\,dz\,dy\,dx 
   \\&\quad\leq 2\|K\|_{L^\infty_{\delta}}\int\int_{([-R,R]^d)^C}\int \Phi_{2(t-s)}(x-y)\rho(y)\rho(z)\,dz\,dy\,dx
   \\&\quad\qquad+2\|K\|_{L^\infty_{\delta}}\int\int\int_{([-R,R]^d)^C} \Phi_{2(t-s)}(x-y)\rho(y)\rho(z)\,dz\,dy\,dx 
   \\&\quad= 2\|K\|_{L^\infty_{\delta}}\int_{([-R,R]^d)^C} \rho(y)\,dy+2\|K\|_{L^\infty_\delta}\int_{([-R,R]^d)^C} \rho(z)\,dz.
\end{align*}
In total we have found that there exists $C(d,T,M)<\infty$ so that for all $t\leq T$
\begin{align*}
\|\rho-\tilde\rho^R\|_{L^1_x}(t)&\leq \|f-f^R\|_{L^1_x}+C\sup_{s\in[0,T]}\int_{([-R,R]^d)^C}\rho(s,y)\,dy
    \\ &\quad + C\int_0^t(t-s)^{-1/2}\|\rho-\tilde\rho^R\|_{L^1_x}(s)\,ds.
\end{align*}
Taking $\tau$ so that $C\int_0^\tau(\tau-s)^{-1/2}\,ds<\frac{1}{2}$, we get that 
\[\sup_{t\in[0,\tau]}\|\rho-\tilde\rho^R\|_{L^1_x}(t)\leq 2\|f-f^R\|_{L^1_x}+C\sup_{s\in[0,T]}\int_{([-R,R]^d)^C}\rho(s,y)\,dy.\]
Iterating this bound we find there exists $C(d,T,\|K\|_{L^\infty_{\delta}})<\infty$ so that
\[\|\rho-\tilde\rho^R\|_{C^0_tL^1_x}\leq C\|f-f^R\|_{L^1_x}+C\sup_{s\in[0,T]}\int_{([-R,R]^d)^C}\rho(s,y)\,dy.\]
Clearly the first term on the right hand side above goes to $0$ as $R\rightarrow\infty$. The second term goes to $0$ using that the family $\{\rho(s,\cdot):s\in[0,T]\}$ is compact in $TV(\R^d)$, thus uniformly tight.
\end{proof}

\begin{proposition}\label{prop:rho-positivity}
	For all $L<\infty$ and all $t>0$, there exists $C(d,t,L,f,\|K\|_{L^\infty_{\delta}})< \infty$ so that for all sufficiently large $R,$
	\[\inf_{x\in B_L}\tilde\rho^R(t,x) \geq C^{-1}.\]
\end{proposition}
\begin{proof}
We fix $M<\infty$ so that $\|f\|_{L^1(B_M)}\geq\frac{3}{4}.$ Since $f^R\rightarrow f$ in $L^1(\R^d)$, $\|f^R\|_{L^1(B_M)}\geq \frac{1}{2}$ for all sufficiently large $R.$ As $f$ is positive there exists $C(d,M)<\infty$ so that for all $t\leq 1$ and sufficiently large $R$
\[\|\Phi_t*f^R\|_{L^1(B_M)}\geq C^{-1},\]
On the other hand,
\[\|\tilde\rho^R(t)-\Phi_t*f^R\|_{L^1_x}\leq C\|K\|_{L^\infty_{\delta}}\int_0^t(t-s)^{-1/2}\,ds,\]
hence there exists $\tau(d,M,\|K\|_{L^\infty_{\delta}})>0$ so that for all $t\leq\tau$
\begin{equation}\label{eq:local-lower-bound}
    \|\tilde\rho^{R}\|_{L^1(B_M)}\geq C^{-1}.
\end{equation}
By~\cite[Corollary 6.4.3]{bogachev_fokker-planck-kolmogorov_2015} we see that $\tilde\rho^R$ satisfies the conditions of~\cite[Theorem 8.2.1]{bogachev_fokker-planck-kolmogorov_2015} so there exists $C(d)<\infty$ so that for all $x,y\in\R^d$ and $0<s<t,$
\[\tilde\rho^R(t,x)\geq \tilde\rho^R(s,y)\exp\Big({-C}\Big(1+\frac{t-s}{s}\|K\|_{L^\infty_{\delta}}^2+\frac{|x-y|^2}{t-s}\Big)\Big).\]
We thus find that there exists $C(d,t,s,\|K\|_{L^\infty_{\delta}})<\infty$ so that for all $x\in B_L$ and $y\in B_M$
\[\tilde\rho^R(t,x)\geq C^{-1}\tilde\rho^R(s,y)\exp\Big(-C\sup\limits_{z\in B_L,w\in B_M}\frac{|z-w|^2}{t-s}\Big).\]
We then conclude the proposition by averaging both sides over $y\in B_M$, choosing $s=\tau\wedge \frac{t}{2}$, and using~\eqref{eq:local-lower-bound}.
\end{proof}

\subsection{Convergence of the $f_j^R$ and $g^{i,R}_j$ }

\begin{proposition}\label{prop:f_ij-convergence}
For all $T>0$ and $1\leq j\leq N$, as $R\rightarrow \infty$, $\tilde f_j^R\rightarrow f_j$ in $C^0([0,T],L^1(\R^{dj})).$    
\end{proposition}

\begin{proof}
We note that it suffices to prove that $\tilde f_N^R\rightarrow f_N$ in $C^0([0,T],L^1(\R^{dN}))$ since the fact that $\tilde f_N^R-f_N$ marginalizes to $\tilde f_j^R-f_j$ implies that
\[\|\tilde f_j^R-f_j\|_{L^1(\R^{dj})}\leq \|\tilde f_N^R-f_N\|_{L^1(\R^{dN})}.\]
Letting
\[J(x):=-\frac{1}{N}\sum_{k=1}^Ne_k\otimes\sum_{\ell=1}^NK(x_k,x_\ell) \]
and
\[J^R(x):=-\frac{1}{N}\sum_{k=1}^Ne_k\otimes\sum_{\ell=1}^NK^R(x_k,x_\ell),\]
we have that
\begin{align*}
(f_N-\tilde f_N^R)(t)&=\Phi_t*(f^{\otimes N}-(f^R)^{\otimes N})+\int_0^t\nabla\Phi_{t-s}*\Big(Jf_N-J^R\tilde f^R_N\Big)(s)\,ds
\\&= \Phi_t*(f^{\otimes N}-(f^R)^{\otimes N})+\int_0^t\nabla\Phi_{t-s}*\Big((J-J^R)f_N\Big)(s)\,ds
\\&\qquad\qquad\qquad\qquad\qquad\qquad\qquad+\int_0^t\nabla\Phi_{t-s}*\Big(J^R(f_N-\tilde f^R_N)\Big)(s)\,ds.
\end{align*}
We thus find that
\begin{align*}
\|f_N-\tilde f_N^R\|_{L^1_x}(t)&\leq \|f^{\otimes N}- (f^R)^{\otimes N}\|_{L^1_x}+C\|J_R\|_{L^\infty_{t,x}}\int_0^t(t-s)^{-1/2}\|f_N-\tilde f^R_N\|_{L^1_x}(s)\,ds
\\&\quad+C\int_0^t(t-s)^{-1/2}\|(J-J^R)f_N\|_{L^1_x}(s)\,ds
\end{align*}
Using that $J=J^R$ on $[-R,R]^{dN}$ we thus find that there exists some $C(d,N,T,\|K\|_{L^\infty_{\delta}})<\infty$ so that
\begin{align*}
\|f_N-\tilde f_N^R\|_{L^1_x}(t)&\leq \|f^{\otimes N}-(f^R)^{\otimes N}\|_{L^1_x}+C\int_0^t(t-s)^{-1/2}\|f_N-\tilde f^R_N\|_{L^1_x}(s)\,ds
\\&\quad+C\sup_{t\in[0,T]}\int_{([-R,R]^{dN})^C}f_N(s,y)\,dy.
\end{align*}
Using the same iteration argument as the end of the proof of Proposition~\ref{prop:rho-convergence} we thus find there exists some $C(d,N,T,\|K\|_{L^\infty_{\delta}})<\infty$ so that
\[\|f_N-\tilde f_N^R\|_{C^0_tL^1_x}\leq C\|f^{\otimes N}-(f^R)^{\otimes N}\|_{L^1_x}+C\sup_{s\in[0,T]}\int_{([-R,R]^{dN})^C}f_N(s,y)\,dy.\]
We conclude by noting that $(f^R)^{\otimes N}\rightarrow f^{\otimes N}$ in $L^1(\R^{dN})$ and as $R\rightarrow\infty$
\[\sup_{s\in[0,T]}\int_{([-R,R]^{dN})^C}f_N(s,y)\,dy\rightarrow 0,\]
by the uniform tightness of $\{f_N(s,\cdot):s\in[0,T]\}.$
\end{proof}

\begin{lemma}\label{lem:forcing-convergence}
Suppose as $R\rightarrow\infty$, $\tilde h^R\rightarrow h$ in $C^0([0,T],L^1(\R^{dj})).$ Then, as $R\rightarrow\infty,$
\begin{itemize}
    \item for all $j \geq 1,$ $k,\ell\in[j]$, 
    \[K^R(x_k,x_\ell)\tilde h^R(x)\rightarrow K(x_1,x_2)h(x) \text{ in } C^0([0,T],L^1(\R^{dj})),\]
    \item for all $j \geq 2$, 
    \[\int K^R(x_1,x_2)\tilde h^R(x)\,dx_2\rightarrow \int K(x_1,x_2)h(x)\,dx_2\text{ in }C^0([0,T],L^1(\R^{d(j-1)})).\]
\end{itemize}
\end{lemma}
\begin{proof}
For the first point we have that
\[\|K^Rh^R-Kh\|_{L^1_x}\leq \|K^R(h^R-h)\|_{L^1_x}+\|(K^R-K)h\|_{L^1_x}\leq \|K\|_{L^\infty_{\delta}}\|h^R-h\|_{L^1_x}+\|K\|_{L^\infty_{\delta}}\int_{([-R,R]^{dj})^C} |h|\,dy.\]
The statement then follows by the uniform tightness of $\{h(s,\cdot):s\in[0,T]\}$. The second point follows similarly.
\end{proof}

\begin{proposition}\label{prop:g_ij-convergence}
For all $T>0$ and $(i,j)\in D$, as $R\rightarrow \infty$, $\tilde{g}^{i,R}_j\rightarrow g^i_j$ in $C^0([0,T],L^1(\R^{dj})).$   
\end{proposition}

\begin{proof}
We will prove this inductively on $D$ using the ordering given in Definition~\ref{def.order}.
We will thus inductively show that $\|\tilde g^{i,R}_j\|_{C^0_tL^1_x}$ are bounded uniformly in $R$ and $\tilde g^{i,R}_j\rightarrow g^{i}_j$ in $C^0([0,T],L^1(\R^{dj}))$.

The base case $(i,j)=(0,1)$ is immediately given by Proposition~\ref{prop:rho-convergence} and that $\|\tilde\rho^R\|_{L^1_x}(t)=1$ for all $R$ and $t\in[0,T].$

For $(i,j)>(0,1)$ we then find that
\begin{align*}
\tilde g^{i,R}_j(t)&=-\int_0^t\nabla\Phi_{t-s}*\Big(\sum_{k=1}^j\tilde g^{i,R}_je_k\otimes \int K^R(x_k,x_*)\tilde\rho^R(x_*)dx_*\Big)(s)\,ds
\\&\quad-\int_0^t\nabla\Phi_{t-s}*\Big(\sum_{k=1}^j\tilde\rho^R(x_k)e_k\otimes \int K^R(x_k,x_*)\tilde g^{i,R}_j(x^{[j]-\{k\}\cup\{x_*\}})\,dx_*\Big)(s)\,ds
\\&\quad+\int_0^t\nabla\Phi_{t-s}*\tilde F^{i,R}_j(s)\,ds,
\end{align*}
where $\tilde F^{i,R}_j$ is the forcing and depends on $\tilde g^{k,R}_{\ell}$ for all $(k,\ell)<(i,j).$ We thus find that for some $C(d,j,T)<\infty$
\begin{align*}
\|\tilde g^{i,R}_j\|_{L^1_x}(t)&\leq C\|K\|_{L^\infty_{\delta}}\|\tilde\rho^R\|_{C^0_tL^1_x}\int_0^t(t-s)^{-1/2}\|\tilde g^{i,R}_j\|_{L^1_x}(s)\,ds
\\&\quad+C\|\tilde F^{i,R}_j\|_{C^0_tL^1_x}.
\end{align*}
Using the iteration argument at the end of the proof of Proposition~\ref{prop:rho-convergence} we find there exists $C(d,j,T,\|K\|_{L^\infty_{\delta}})<\infty$ so that
\[\|\tilde g^{i,R}_j\|_{L^1_x}(t)\leq C\|\tilde F^{i,R}_j\|_{C^0_tL^1_x}.\]
Since $\|K^R\|_{L^\infty_{\delta}}\leq \|K\|_{L^\infty_{\delta}}$ it is clear that if the $\tilde g^{k,R}_{\ell}$ are all uniformly bounded over $R$, then so is $\tilde F^{i,R}_j$, and thus so is $\tilde g^{i,R}_j.$

We next prove the convergence. Letting $F^i_j$ be the forcing of $g^i_j$, then
\begin{align*}
(\tilde g^{i,R}_j-g^i_j)(t)&=\int_0^t\nabla\Phi_{t-s}*\Big(\sum_{k=1}^jg^i_je_k\otimes \int K(x_k,x_*)\rho(x_*)\,dx_*-\tilde g^{i,R}_je_k\otimes \int K^R(x_k,x_*)\rho^R(x_*)\,dx_*\Big)\,ds
\\&\quad-\int_0^t\nabla\Phi_{t-s}*\Big(\sum_{k=1}^j\tilde\rho^R(x_k)e_k\otimes \int K^R(x_k,x_*)\tilde g^{i,R}_j(x^{[j]-\{k\}\cup\{x_*\}})\,dx_*\Big)(s)\,ds
\\&\quad+\int_0^t\nabla\Phi_{t-s}*\Big(\sum_{k=1}^j\rho(x_k)e_k\otimes \int K(x_k,x_*) g^i_j(x^{[j]-\{k\}\cup\{x_*\}})\,dx_*\Big)(s)\,ds
\\&\quad+\int_0^t\nabla\Phi_{t-s}*(\tilde F^{i,R}_j-F^i_j)(s)\,ds,
\end{align*}
thus
\begin{align}
\notag\|\tilde g^{i,R}_j-g^i_j\|_{L^1}(t)&\leq\int_0^t\Big\|\nabla\Phi_{t-s}*\Big(\sum_{k=1}^j\tilde g^{i,R}_je_k\otimes \int K^R(x_k,x_*)\tilde\rho^R(x_*)\,dx_*
\\&\qquad\qquad\qquad-g^i_je_k\otimes \int K(x_k,x_*)\rho(x_*)\,dx_*\Big)\Big\|_{L^1_x}(s)\,ds
\notag\\&\quad+\int_0^t\Big\|\nabla\Phi_{t-s}*\Big(\sum_{k=1}^j\tilde\rho^R(x_k)e_k\otimes \int K^R(x_k,x_*)\tilde g^{i,R}_j(x^{[j]-\{k\}\cup\{x_*\}})\,dx_*
\notag\\&\qquad\qquad\qquad-\rho(x_k)e_k\otimes \int K(x_k,x_*) g^i_j(x^{[j]-\{k\}\cup\{x_*\}})\,dx_*\Big)\Big\|_{L^1_x}(s)\,ds
\notag\\&\quad+\int_0^t\|\nabla\Phi_{t-s}*(\tilde F^{i,R}_j-F^i_j)(s)\|_{L^1_x}(s)\,ds.\label{eq:g_ij-L^1-bound}
\end{align}
Using exchangeability, we bound the first term in~\eqref{eq:g_ij-L^1-bound} as follows
\begin{align*}
&\int_0^t\Big\|\nabla\Phi_{t-s}*\Big(\sum_{k=1}^j\tilde g^{i,R}_je_k\otimes \int K^R(x_k,x_*)\tilde\rho^R(x_*)\,dx_*-g^i_je_k\otimes \int K(x_k,x_*)\rho(x_*)\,dx_*\Big)\Big\|_{L^1_x}(s)\,ds
\\&\quad\leq C\int_0^t (t-s)^{-1/2} \Big\|\int \tilde g^{i,R}_j(x)K^R(x_1,x_*)\tilde\rho^R(x_*)-g^i_j(x)K(x_1,x_*)\rho(x_*)\,dx_*\Big\|_{L^1_x}(s)\,ds
\\&\quad\leq C\int_0^t (t-s)^{-1/2} \Big(\|\tilde g^{i,R}_j-g^i_j\|_{L^1_x} \|K^R\|_{L^\infty_\delta} \|\tilde\rho^R\|_{L^1_x}+\|g^i_j\|_{L^1_x} \|K^R\|_{L^\infty_\delta} \|\tilde\rho^R-\rho\|_{L^1_x}
\\&\qquad\qquad\qquad\qquad\qquad\qquad\qquad\qquad\qquad+\|g^i_j\|_{L^1_x}\Big\|\int (K^R-K)(x_1,x_*)\rho(x_*)\,dx_*\Big\|_{L^1_x}\Big)(s)\,ds 
\\&\quad\leq C\int_0^t(t-s)^{-1/2}\|\tilde g^{i,R}_j-g^i_j\|_{L^1_x}(s)\,ds+C\|\tilde\rho^R-\rho\|_{C^0_tL^1_x}+C\sup_{s\in[0,T]}\int_{([-R,R]^{d})^C}\rho(s,y)\,dy.
\end{align*}
Similarly we can bound the second term in~\eqref{eq:g_ij-L^1-bound} by
\[C\int_0^t(t-s)^{-1/2}\|\tilde g^{i,R}_j-g^i_j\|_{L^1_x}(s)\,ds+C\|\tilde g^{i,R}_j\|_{C^0_tL^1_x}\|\rho-\tilde\rho^R\|_{C^0_tL^1_x}+C\sup_{s\in[0,T]}\int_{([-R,R]^{dj})^C}|g^i_j|(s,y)\,dy.\]
The last term in~\eqref{eq:g_ij-L^1-bound} is bounded by $\|\tilde F^{i,R}_j-F^i_j\|_{C^0_tL^1_x}.$ In total, using the uniform bounds on $\tilde g^{i,R}_j$ we find that there exists $C(d,i,j,T,\|K\|_{L^\infty_{\delta}})<\infty$ so that
\begin{align*}
\|\tilde g^{i,R}_j-g^i_j\|_{L^1_x}(t)\leq& C\Big(\|\rho-\tilde\rho^R\|_{C^0_tL^1_x}+\|\tilde F^{i,R}_j-F^i_j\|_{C^0_tL^1_x}
\\&+\sup_{s\in[0,T]}\int_{([-R,R]^{d})^C}\rho(s,y)\,dy+\sup_{s\in[0,T]}\int_{([-R,R]^{dj})^C}|g^i_j|(s,y)\,dy\Big)
\\&+C\int_0^t(t-s)^{-1/2}\|\tilde g^{i,R}_j-g^i_j\|_{L^1_x}(s)\,ds.
\end{align*}
Using the same iteration argument as the end of the proof of Proposition~\ref{prop:rho-convergence}, to conclude it suffices to show that as $R \to \infty,$
\begin{gather}
    \label{eq.g-ij-convergence-1}\|\rho-\tilde\rho^R\|_{C^0_tL^1_x}\rightarrow 0,\\
    \label{eq.g-ij-convergence-2}\sup_{s\in[0,T]}\int_{([-R,R]^{d})^C}\rho(s,y)\,dy+\sup_{s\in[0,T]}\int_{([-R,R]^{dj})^C}|g^i_j|(s,y)\,dy\rightarrow 0,\\
    \label{eq.g-ij-convergence-3}\|\tilde F^{i,R}_j-F^i_j\|_{C^0_tL^1_x}\rightarrow 0.
\end{gather}
We note that~\eqref{eq.g-ij-convergence-1} follows from Proposition~\ref{prop:rho-convergence},~\eqref{eq.g-ij-convergence-2} follows from the uniform tightness of $\rho$ and $g^i_j$, and~\eqref{eq.g-ij-convergence-3} follows from the inductive hypothesis, Lemma~\ref{lem:forcing-convergence}, and the definitions of $\tilde F^{i,R}_j$ and $F^i_j.$
\end{proof}

\subsection{Completing the proof of Theorem~\ref{thm.main-result-intro}}

\begin{proof}[Proof of Theorem~\ref{thm.main-result-intro}] We fix $t>0$ throughout. By Proposition~\ref{prop.main-result-special}, there exists $C(\|K\|_{L^\infty_{\delta}},i)<\infty$ so that for all 
    \[j \leq C^{-1} e^{-Ct^2} N^{2/3},\]
    we have the bound for all $R \geq 1$,
    \begin{equation}
    \int_{[-R,R]^{dj}} \bigg|\frac{\gamma^{i,R}_j}{(\rho^{R})^{\otimes j}}\bigg|^2 (\rho^R)^{\otimes j}\,dx \leq  Ce^{Ct} \Big(\frac{j}{N}\Big)^{2(i+1)}.
    \end{equation}
    In order to conclude, we need to show the same bound for the same range of $j$ and the same constant for $\gamma^i_j$ in place of $\gamma^{i,R}_j.$

    We note that
    \[\int \bigg|\frac{\gamma^{i,R}_j}{((\rho^{R})^{\otimes j})^{1/2}}\bigg|^2 dx=\int \bigg|\frac{\gamma^{i,R}_j}{(\rho^R)^{\otimes j}}\bigg|^2 (\rho^R)^{\otimes j}\leq  Ce^{Ct} \Big(\frac{j}{N}\Big)^{2(i+1)}.\]
  Thus $\gamma^{i,R}_j((\rho^R)^{\otimes j})^{-1/2}$ is uniformly bounded in $L^2(\R^{dj})$, so by weak compactness there is a subsequence $R_k \to \infty$ along which $\gamma^{i,R_k}_j((\rho^{R_k})^{\otimes j})^{-1/2}$  weakly converges to some $G\in L^2(\R^{dj})$ with
  \begin{equation}
  \label{eq.G-ineq}\int |G|^2 \leq Ce^{Ct} \Big(\frac{j}{N}\Big)^{2(i+1)}.\end{equation}
  To conclude then it suffices to show that this limit $G$ is what we expect it to be: $\gamma^i_j (\rho^{\otimes j})^{-1/2}.$
  
  Next Propositions~\ref{prop:rho-convergence},~\ref{prop:f_ij-convergence}, and~\ref{prop:g_ij-convergence} with Lemma~\ref{lem:R^d-to-torus} and the definitions of $\gamma^{i,R}_j,\gamma^{i}_j$ imply
    \[\|\gamma^{i,R}_j - \gamma^i_j\|_{C^0_tL^1_x} \to 0 \text{ as } R \to \infty,\]
    and
    \[\|\rho^{R}-\rho\|_{C^0_tL^1_x} \to 0 \text{ as } R \to \infty.\]
    For all $L>0,$ Proposition~\ref{prop:rho-positivity} implies there exists some $C<\infty$ so that $\rho,\rho^{R}\geq C^{-1}$ on $B_L$ for all sufficiently large $R$. Accordingly we find that
\begin{align}
    \notag\int_{B_L} \bigg|\frac{\gamma^{i,R}_j}{((\rho^R)^{\otimes j})^{1/2}}-\frac{\gamma^i_j}{(\rho^{\otimes j})^{1/2}}\bigg|&\leq \int_{B_L} \bigg|\frac{\gamma^{i,R}_j(\rho^{\otimes j})^{1/2}-\gamma^i_j((\rho^R)^{\otimes j})^{1/2}}{(\rho^{\otimes j})^{1/2}((\rho^{R})^{\otimes j})^{1/2}}\bigg|
    \\\notag&\leq \int_{B_L} \frac{|\gamma^{i,R}_j|}{((\rho^R)^{\otimes j})^{1/2}}\frac{|(\rho^{\otimes j})^{1/2}-((\rho^{R})^{\otimes j})^{1/2}|}{(\rho^{\otimes j})^{1/2}}+C\int_{B_L} \frac{|\gamma^{i,R}_j-\gamma^i_j|}{((\rho^R)^{\otimes j})^{1/2}}
    \\\label{eq:local-ratio-convergence}&\leq C\int \frac{|\gamma^{i,R}_j|}{(\rho^{\otimes j})^{1/2}}|(\rho^{\otimes j})^{1/2}-((\rho^{R})^{\otimes j})^{1/2}|+C\int |\gamma^{i,R}_j-\gamma^i_j|.
\end{align}
Since $\rho^R$ converges to $\rho$ in $L^1(\R^d)$, the Vitali convergence thoerem implies that $((\rho^R)^{\otimes j})^{1/2}$ converges to $(\rho^{\otimes j})^{1/2}$ in $L^2(\R^{dj})$. Together with the uniform boundedness of $\gamma^{i,R}_j((\rho^R)^{\otimes j})^{-1/2}$ in $L^2(\R^{dj})$, this implies that the first term in the right hand side of~\eqref{eq:local-ratio-convergence} converges to 0. The second term converges to $0$ since $\gamma^{i,R}_j$ converges to $\gamma^i_j$ in $L^1(\R^{dj})$. We have thus found that $\gamma^{i,R}_j((\rho^R)^{\otimes j})^{-1/2}$ converges to $\gamma^i_j(\rho^{\otimes j})^{-1/2}$ in $L^1_{loc}(\R^{dj})$, and so we must have that
\[G=\frac{\gamma^i_j}{(\rho^{\otimes j})^{1/2}},\]
from which~\eqref{eq.G-ineq} allows us to conclude.
\end{proof}

{\small
\bibliographystyle{alpha}
\bibliography{references,references2}
}
 
\end{document}